\documentclass[reqno,english]{amsart}
\usepackage{amsfonts,amsmath,latexsym,verbatim,amscd,mathrsfs,color,array}
\usepackage[colorlinks=true]{hyperref}
\usepackage{amsmath,amssymb,amsthm,amsfonts,graphicx,color}
\usepackage{amssymb}
\usepackage{pdfsync}
\usepackage{epstopdf}
\usepackage{cite}

\usepackage{graphicx}

\newcommand{\cuad}{{\sqcap\kern-.68em\sqcup}}

\newcommand{\be}{\begin{equation}}
\newcommand{\ee}{\end{equation}}

\newcommand{\bs}{\begin{split}}
\newcommand{\esp}{\end{split}}

\newtheorem{lemma}{Lemma}[section]
\newtheorem{prop}{Proposition}[section]
\newtheorem{theorem}{Theorem}

\newtheorem{remark}{Remark}[section]
\newcommand{\bremark}{\begin{remark} \em}
\newcommand{\eremark}{\end{remark} }

\long\def\comment#1{\marginpar{\vtop{\raggedright\small$\bullet$\ #1}}}
\long\def\hide#1{}

\long\def\anot#1{\ \\{\bf \crr ANNOTATION.} {#1}}
\long\def\noanot#1{}
\definecolor{redd}{RGB}{200,0,0}

\def\crr{\color{red}}
\long\def\elim#1{{\color{red} ELIMINAR\\ #1}}

\def\crr{}
\long\def\comment#1{}
\long\def\anot#1{}
\long\def\elim#1{}

\long\def\comment#1{\marginpar{\raggedright\small$\bullet$\ #1}}
\numberwithin{equation}{section}
\title[fast diffusion equations with Dirichlet boundary conditions]{Extinction behaviour for the fast diffusion equations with critical exponent and Dirichlet boundary conditions}
\author[Y. Sire]{Yannick Sire}
\address{\noindent Department of Mathematics, Johns Hopkins University, 404 Krieger Hall, 3400 N. Charles Street, Baltimore, MD 21218, USA}
\email{sire@math.jhu.edu}
\author[J. Wei]{Juncheng Wei}
\address{\noindent Department of Mathematics, University of British Columbia, Vancouver, B.C., Canada, V6T 1Z2}
\email{jcwei@math.ubc.ca}
\author[Y. Zheng]{Youquan Zheng}
\address{\noindent School of Mathematics, Tianjin University, Tianjin 300072, P. R. China}
\email{zhengyq@tju.edu.cn}
\begin{document}
\begin{abstract}
For a smooth bounded domain $\Omega\subseteq\mathbb{R}^n$, $n\geq 3$, we consider the fast diffusion equation with critical sobolev exponent $$\frac{\partial w}{\partial\tau} =\Delta w^{\frac{n-2}{n+2}}$$ under Dirichlet boundary condition $w(\cdot, \tau) = 0$ on $\partial\Omega$. Using the parabolic gluing method, we prove existence of  an initial data $w_0$ such that the corresponding solution has extinction rate of the form $$\|w(\cdot, \tau)\|_{L^\infty(\Omega)} = \gamma_0(T-\tau)^{\frac{n+2}{4}}\left|\ln(T-\tau)\right|^{\frac{n+2}{2(n-2)}}(1+o(1))$$ as $t\to T^-$, here $T > 0$ is the finite extinction time of $w(x, \tau)$. This generalizes and provides rigorous proof of  a result of Galaktionov and King \cite{galaktionov2001fast} for the radially symmetric case $\Omega =B_1(0) : = \{x\in \mathbb{R}^n||x| < 1\}\subset\mathbb{R}^n$.
\end{abstract}
\maketitle

\tableofcontents

\section{Introduction}
Let $\Omega$ be a smooth bounded domain in $\mathbb{R}^n$, $n\geq 3$. We consider the following fast diffusion equation
\begin{equation}\label{e:fastdiffusion}
\begin{cases}
\frac{\partial w}{\partial\tau} =\Delta w^m \text{ in } \Omega\times (0, \infty),\\
w = 0\text{ on } \partial\Omega\times (0, \infty),\\
w(\cdot, 0) = w_0 \text{ in }\overline{\Omega},
\end{cases}
\end{equation}
with $m\in (0, 1)$. The first equation in (\ref{e:fastdiffusion}) is a singular but non-degenerate parabolic problem. From \cite{Kwong1998}, we know that there exists a unique positive classical solution $w$ which is local in time for the the Dirichlet problem (\ref{e:fastdiffusion}). The solution vanishes at finite time as $\tau\to T- < \infty$, $w > 0$ in $\Omega\times (0, T)$ and $w(x, T) = 0$.

The asymptotic behaviour of solutions for (\ref{e:fastdiffusion}) near the extinction time $T$ has attracted much attention in the past two decades. Suppose $\Omega =B_1(0) : = \{x\in \mathbb{R}^n||x| < 1\}\subset\mathbb{R}^n$, when $m\in (m_s, 1)$ and $m_s := \frac{n-2}{n+2}$. From the classical work of Berryman and Holland \cite{BerrymanHolland}, the solution near the extinction time has a separated self-similar form
$$
w(x, \tau) = (T-\tau)^{\frac{1}{1-m}}S(x),
$$
where $S(x)$ is the positive solution of the following nonlinear elliptic problem
$$
\Delta S^m + (1-m)^{-1}S = 0\text{ in }\Omega, \quad S=0\text{ on }\partial\Omega.
$$
When $m\in (0, m_s)$, it was proved in \cite{galaktionov1997}, \cite{galaktionov2001fast}, \cite{king1993self} and \cite{King2010asymptotic} that the self-similar behavior as $t\to T-$ can be described as
$$
w(x, \tau)\sim (T-\tau)^\alpha F\left(\frac{|x|}{(T-\tau)^\beta}\right), \quad (1-m)\alpha + 2\beta = 1,
$$
which provides the leading order of the inner solution. Thus the inner region is $|x| = O((T-\tau)^\beta)$ and the outer region is $|x| = O(1)$ with
$$
w(x, \tau)\sim (T-\tau)^{(m\alpha + (n-2)\beta)/m}\Phi(x),
$$
where $\Phi(x)$ is the Green's function with Dirichlet boundary condition,
$$
\Delta\Phi = -C_{n, m}\delta(x)\text{ in }\Omega, \quad \Phi=0\text{ on }\partial\Omega,
$$
where $C_{n, m}$ is a positive constant depending on $n$ and $m$, $\delta(x)$ is the Dirac delta distribution function locating at origin.

For general smooth bounded domains, the papers \cite{BerrymanHolland}, \cite{BonforteGrilloVazquezJMPA}, \cite{DibenedettoKwonga}, \cite{DibenedettoKwongb} and \cite{FeireslSimondon} studied the asymptotic behaviour near extinction time for solutions to (\ref{e:fastdiffusion}). Recently, Bonforte and Figalli proved the sharp extinction rates in \cite{BonforteFigalli2019} for the supercritical case $m\in (m_s, 1)$. Optimal regularity at the boundary for solutions to (\ref{e:fastdiffusion}) was proved by Jin and Xiong in \cite{JinXiongFastDiffusion} when $m \in [m_s, 1)$. We refer the interested readers to \cite{blanchet2009asymptotics}, \cite{BonforteDolbeaultPNAS2010}, \cite{bonforte2006global}, \cite{DaskalopoulosDelpino2018}, \cite{DaskalopoulosandKenig}, \cite{galaktionov2000asymptotics}, \cite{kim2006potential}, \cite{vazquez2017mathematical} and the references therein for more results on the asymptotic behavior of fast diffusion and porous medium equations.

The case $m = m_s$ corresponds to the Yamabe flow which describes the evolution of conformal metrics; there are many results in the literature under different settings. For the Dirichlet problem (\ref{e:fastdiffusion}), sharp asymptotic results are still missing. To the best of our knowledge, the only asymptotic result was due to Galaktionov and King \cite{galaktionov2001fast}. The aim of this paper is to provide a rigourous asymptotic analysis of (\ref{e:fastdiffusion}) near the extinct time $T$ for general smooth domain $\Omega$. Our result can be stated as follows.

Let $H(x, y)$ be the regular part of the Green's function on $\Omega$ with Dirichlet boundary condition, i.e., for fixed $y\in \Omega$, $H(x, y)$ satisfies
$\Delta_x H(x, y) = 0$ in $\Omega$, $H(x, y) = \frac{(n(n-2))^{\frac{n-2}{4}}}{|x-y|^{n-2}}\text{ for } x\in \partial\Omega$. Let $q_1,\cdots, q_k$ to be $k$ different but fixed points in $\Omega$. We define the following matrix,
\begin{equation}\label{e:matrix}
\mathcal G (q) = \left [ \begin{matrix} H(q_1,q_1) &  -G(q_1,q_2) &\cdots & -G(q_1,q_k) \\   -G(q_1,q_2) &  H(q_2,q_2) &  -G(q_2,q_3) \cdots & -G(q_3,q_k) \\ \vdots &  & \ddots& \vdots   \\  -G(q_1,q_k) &\cdots&  -G(q_{k-1},q_k) &  H(q_k,q_k)            \end{matrix}    \right ].
\end{equation}
Our main result is
\begin{theorem}\label{t:mainforfastdiffusion}
Suppose $m = m_s = \frac{n-2}{n+2}$, $n\geq 3$, $T>0$ is the finite extinction time, $k$ is a positive integer and $q_1,\cdots, q_k$ are $k$ different but fixed points in $\Omega$ such that the matrix defined in (\ref{e:matrix}) is positive definite, then there exist an initial data $w_0$ and smooth functions $\tilde\mu_j(\tau)$, $\tilde\xi_j(\tau)$ such that the solution $w(x, \tau)$ of problem (\ref{e:fastdiffusion}) has the following asymptotic form when $\tau\to T-$,
\begin{equation*}
\begin{aligned}
&w^{\frac{n-2}{n+2}}(x, \tau) = \left(T-\tau\right)^{\frac{n-2}{4}}\times\\
&\quad\quad\quad\quad\left(\sum_{j=1}^k\left(\alpha_n\left(\frac{\tilde\mu_j(\tau)}{\tilde\mu^2_j(\tau)+|x-\tilde\xi_j(\tau)|^2}\right)^{\frac{n-2}{2}} - \tilde\mu^{\frac{n-2}{2}}_j(\tau)H(x, q_j)\right)+\tilde\varphi(x, \tau)\right),
\end{aligned}
\end{equation*}
where the parameters $\tilde\mu_j(\tau) = \beta_j \left(\log\frac{T}{T-\tau}\right)^{-\frac{1}{n-2}}(1+o(1))$ for some $\beta_j > 0$, $\tilde\xi_j-q_j = o\left(\left(\log\frac{T}{T-\tau}\right)^{-\frac{1}{n-2}}\right)$, $\alpha_n = (n(n-2))^{\frac{n-2}{4}}$ and $\tilde\varphi(x, \tau) \to 0$ uniformly away from the points $q_1,\cdots, q_k$ as $\tau\to T-$.
\end{theorem}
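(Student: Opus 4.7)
The approach is a parabolic inner–outer gluing construction. First I substitute $u:=w^{(n-2)/(n+2)}$ so the PDE becomes $u^{4/(n-2)}\partial_\tau u = \frac{n-2}{n+2}\Delta u$, and pass to self-similar variables $u(x,\tau) = (T-\tau)^{(n-2)/4} U(x,t)$ in the slow time $t = -\log(T-\tau)$. The natural building blocks for concentration at the points $q_j$ are the Aubin--Talenti bubbles $W_{\mu,\xi}(x) = \alpha_n(\mu/(\mu^2+|x-\xi|^2))^{(n-2)/2}$, so I take as first approximation
\begin{equation*}
U_\ast(x,\tau) = \sum_{j=1}^k\Bigl[W_{\mu_j(\tau),\xi_j(\tau)}(x) - \mu_j(\tau)^{(n-2)/2} H(x,q_j)\Bigr],
\end{equation*}
with parameters $\mu_j(\tau), \xi_j(\tau)$ to be determined. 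The subtraction of $\mu_j^{(n-2)/2}H(\cdot,q_j)$ ensures $U_\ast|_{\partial\Omega}\approx 0$ because $H(x,q_j)=\alpha_n/|x-q_j|^{n-2}$ on $\partial\Omega$ matches the far field of the bubble.

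Setting $U = U_\ast + \varphi$, I decompose $\varphi = \varphi^{out} + \sum_j \eta_j\,\phi_j^{in}\bigl((x-\xi_j)/\mu_j,t\bigr)$ with cutoffs $\eta_j$ supported in mesoscopic neighbourhoods of $q_j$. This yields a coupled system of inner equations for $\phi_j^{in}$ in bubble coordinates $y=(x-\xi_j)/\mu_j$ and an outer equation for $\varphi^{out}$ on $\Omega\times(t_0,\infty)$. The leading inner operator is the Aubin--Talenti linearization $L_0[\phi] = \Delta_y \phi + pW^{p-1}\phi$, $p=(n+2)/(n-2)$, whose bounded kernel is spanned by the scaling mode $Z_0 = \tfrac{n-2}{2}W+y\cdot\nabla W$ and the translations $Z_i = \partial_{y_i}W$, $i=1,\ldots,n$. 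Solvability of each inner problem in a polynomially weighted $L^\infty$ framework requires $L^2(\mathbb R^n)$-orthogonality of its right-hand side to $Z_0,\ldots,Z_n$, and these conditions will determine $\mu_j(\tau)$ and $\xi_j(\tau)$.

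The dominant contribution to the inner error near $q_j$ comes from the linearization of the critical nonlinearity about $W_{\mu_j,\xi_j}$ applied to the value of the Green-regular-part correction at $q_j$, namely $-[\mathcal G(q)\vec v]_j$ with $\vec v=(\mu_1^{(n-2)/2},\ldots,\mu_k^{(n-2)/2})^T$ (so $H(q_j,q_j)$ contributes the diagonal entry and $-G(q_j,q_\ell)$ the off-diagonal entries of $\mathcal G(q)$). Pairing this with $Z_0$, together with the $Z_0$-projection of the time derivative of $U_\ast$, yields a reduced ODE system of schematic form
\begin{equation*}
\dot\mu_j = -c_\ast\,\mu_j^{n/2}\,[\mathcal G(q)\vec v]_j + \text{l.o.t.},\qquad j=1,\ldots,k,
\end{equation*}
in slow time $t$. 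Looking for self-similar solutions $\mu_j(t)=\beta_j\gamma(t)$ reduces this to the algebraic system $v_j[\mathcal G(q)\vec v]_j = \mathrm{const}$ with $v_j = \beta_j^{(n-2)/2}$; a positive solution exists precisely because $\mathcal G(q)$ is positive definite (obtained, for example, as a critical point of $\vec v\mapsto \vec v^T\mathcal G(q)\vec v$ under a fixed product constraint $\prod_j v_j = \mathrm{const}$). The remaining scalar equation $\dot\gamma\sim -c\gamma^{n-1}$ integrates to $\gamma(t)\sim t^{-1/(n-2)}$, giving $\mu_j(\tau) = \beta_j|\log(T-\tau)|^{-1/(n-2)}(1+o(1))$. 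The $Z_i$-projections similarly produce first-order ODEs for $\xi_j(\tau)$ with solutions satisfying $\xi_j - q_j = o(\mu_j)$.

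Finally I would close the gluing scheme by a Banach fixed-point argument in weighted topologies: polynomially weighted $L^\infty$ spaces for the inner perturbations on $\mathbb R^n\times(t_0,\infty)$ based on the linear theory for $L_0$, combined with Duhamel/weighted-$L^\infty$ estimates for $\varphi^{out}$ on $\Omega\times(t_0,\infty)$ using the Dirichlet heat semigroup; the parameters $(\mu_j,\xi_j)$ enter as unknowns of the reduced system and are solved as perturbations of their leading asymptotics. The principal obstacle is the extreme slowness of the logarithmic concentration rate: errors must be controlled uniformly on the infinite slow-time interval, which leaves essentially no slack in the choice of norms. A secondary difficulty is the degeneration of the coefficient $U^{4/(n-2)}$ in front of $\partial_\tau U$ both at bubble cores (as $\mu_j\to 0$) and in the outer region, which forces introducing inner parabolic times $d\tau_j/dt = \mu_j^{-2}$ and a corresponding variable-coefficient linear theory for each inner problem.
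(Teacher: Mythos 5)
Your global architecture and the reduced dynamics are essentially the paper's: the ansatz of bubbles corrected by $-\mu_j^{(n-2)/2}H(\cdot,q_j)$, the orthogonality conditions against the scaling and translation modes, the resulting system $\dot\mu_j\approx -c\,\mu_j^{n/2}[\mathcal G(q)\vec v]_j$, the algebraic system for the $\beta_j$ solvable by positive definiteness of $\mathcal G(q)$, and the rate $\mu_0(t)\sim t^{-1/(n-2)}$, i.e.\ $\tilde\mu_j(\tau)\sim\beta_j|\log(T-\tau)|^{-1/(n-2)}$, all match. However, your linear theory is set up as if the problem were the semilinear heat equation $u_t=\Delta u+u^p$, and this is where the proposal breaks down. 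In the self-similar variables the equation is $\partial_t u^p=\Delta u+u^p$, so the linearized inner and outer problems carry a degenerate weight in front of the time derivative: in bubble coordinates one gets $pU^{p-1}\partial_t\phi=\Delta_y\phi+pU^{p-1}\phi+h$ (the factors $\mu_j^{-2}$ cancel between the two sides, so your proposed inner time rescaling $d\tau_j/dt=\mu_j^{-2}$ accomplishes nothing — the degeneracy is the spatial decay $U^{p-1}\sim|y|^{-4}$ at infinity, not the smallness of $\mu_j$), and the outer equation is $pz^{p-1}\psi_t=\Delta\psi+\cdots$, whose coefficient vanishes on $\partial\Omega$, so a Duhamel argument with the Dirichlet heat semigroup does not apply. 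The paper's essential device, absent from your sketch, is the conformal lifting to $\mathbb{S}^n$ by stereographic projection: the weight $U^{p-1}$ is precisely the conformal factor, so the inner problem becomes the nondegenerate equation $\partial_t\tilde\phi=(\Delta_{\mathbb{S}^n}+n)\tilde\phi+\tilde h$, whose weighted a priori estimates are proved by a blow-up/compactness argument; the outer problem is treated by weighted energy ($H^2$-type) estimates using an Aronson--B\'enilan type bound $|z_t/z|\lesssim t^{-1}$, barriers on the sphere near the concentration points, and a maximum-principle comparison for the fast diffusion equation to control $\psi$ near $\partial\Omega$. You need some substitute for all of this; without it the gluing scheme cannot be closed.

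A second genuine gap is the unstable mode. The operator $L_0$ has a simple negative eigenvalue in $L^2(U^{p-1}dy)$ with eigenfunction $Z_0=U$ (after lifting, the constant spherical harmonic, which under $\partial_t-(\Delta_{\mathbb{S}^n}+n)$ grows like $e^{nt}$). The orthogonality conditions you impose — against the kernel modes, which fix $\mu_j$ and $\xi_j$ — do not control this mode, so the inner linear estimate you assert (``solvability in weighted $L^\infty$ requires only orthogonality to the kernel'') is false for generic right-hand sides on the infinite time interval. The paper handles it by subtracting the projection $c(t)\tilde Z_0$ in the linear problem and compensating through a choice of the initial value of each inner perturbation along $Z_0$ (the quantities $e_0[h]$); this is also exactly why the constructed solution is unstable of codimension $k$. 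Your fixed-point argument needs this extra ingredient (or an equivalent shooting in the $k$ unstable directions) to close.
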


In the paper \cite{galaktionov2001fast}, Galaktionov and King gave the extinction rate $\|w(\cdot, \tau)\|_{\infty} = \gamma_0(T-\tau)^{\frac{n+2}{4}}\left|\ln(T-\tau)\right|^{\frac{n+2}{2(n-2)}}(1+o(1))$ when $\Omega =B_1(0) : = \{x\in \mathbb{R}^n||x| < 1\}\subset\mathbb{R}^n$ by matching expansions from the inner and boundary domains. Theorem \ref{t:mainforfastdiffusion} gives a rigourous proof of this extinction rate as well as a description of the space part in the multiple point case for general domains. We refer the interested readers to \cite{king1993self} and \cite{king1993exact} for more results on the extinction behaviour of the fast diffusion equations.

 In the inner region near the point $q_j$, $w(x, \tau)$ is a logarithmic perturbation of the self-similar stationary structure. Indeed, we have
$$
w(x, \tau) = (T-\tau)^{\frac{n+2}{4}}\alpha(t)S_1(|x-q_j|\alpha^{\frac{2}{n+2}}(\tau))(1+o(1))
$$
with $\alpha(\tau) = \gamma_0\left(\log\frac{T}{T-\tau}\right)^{\frac{n+2}{2(n-2)}}$ and $S_1$ belongs to a one-parameter family of stationary positive solutions $\{S_\lambda(|x|)|\lambda >0\}$, which are the Loewner-Nirenberg explicit solutions
$$
S_\lambda(r)=\lambda\left[\frac{2n(n-2)}{2n(n-2)+(n+2)\lambda^{\frac{4}{n+2}}r^2}\right] = \lambda S_1(r\lambda^{\frac{2}{n+2}})
$$
to the nonlinear elliptic equation $\Delta S^{\frac{n-2}{n+2}} + \frac{1}{4}(n+2)S = 0\text{ in }\mathbb{R}^n$, see \cite{LoewnerNirenber}.

Under the transformation
\begin{equation}\label{e:transformation}
u(x, t) = (T-\tau)^{-m/(1-m)}w(x, \tau)^m|_{\tau = T(1-e^{-t})},
\end{equation}
Problem (\ref{e:fastdiffusion}) changes into the Yamabe flow equation on the bounded domain $\Omega$ as follows,
\begin{equation}\label{e:main}
\begin{cases}
\frac{\partial u^p}{\partial t}=\Delta u+u^p \text{ in } \Omega\times (0, \infty),\\
u = 0\text{ on } \partial\Omega\times (0, \infty),\\
u(\cdot, 0) = u_0 \text{ in }\overline{\Omega},
\end{cases}
\end{equation}
for a function $u:\mathbb{R}^n\times [0, \infty)\to \mathbb{R}$ and positive initial datum $u_0$ satisfying $u_0|_{\partial\Omega} = 0$, $p = \frac{n+2}{n-2}$. Therefore, using the transformation (\ref{e:transformation}), for problem (\ref{e:main}), Theorem \ref{t:mainforfastdiffusion} has the following equivalent form.
\begin{theorem}\label{t:main}
Suppose $n\geq 3$, $k$ is a positive integer and $q_1,\cdots, q_k$ are $k$ different but fixed points in $\Omega$ such that the matrix defined in (\ref{e:matrix}) is positive definite, then there exist an initial data $u_0$ and smooth functions $\mu_j(t)$, $\xi_j(t)$ such that the solution of problem (\ref{e:main}) has the following asymptotic form when $t\to +\infty$,
\begin{equation}\label{infinitetimeblowup}
u(x, t) = \sum_{j=1}^k\left(\alpha_n\left(\frac{\mu_j(t)}{\mu^2_j(t)+|x-\xi_j(t)|^2}\right)^{\frac{n-2}{2}} - \mu^{\frac{n-2}{2}}_j(t)H(x, q_j)\right)+\varphi(x, t),
\end{equation}
where $\mu_j  = \beta_j t^{-\frac{1}{n-2}}(1+o(1))$ for some $\beta_j > 0$, $\xi_j-q_j = o(t^{-\frac{1}{n-2}}) $, $\alpha_n = (n(n-2))^{\frac{n-2}{4}}$ and $\varphi(x, t) \to 0$ uniformly away from the points $q_1,\cdots, q_k$ as $t\to +\infty$.
\end{theorem}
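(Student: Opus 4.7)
The plan is to attack Theorem~\ref{t:main} by the parabolic inner-outer gluing method, which has proved effective for critical parabolic problems exhibiting infinite-time bubbling. The first step is to set up a good approximate solution. Since the Aubin-Talenti bubbles
$$U_{\mu,\xi}(x) = \alpha_n\left(\frac{\mu}{\mu^2+|x-\xi|^2}\right)^{(n-2)/2}$$
solve $\Delta U + U^p = 0$ on $\mathbb R^n$ but do not vanish on $\partial\Omega$, the Dirichlet boundary condition forces me to project each bubble, replacing it by $U_{\mu_j,\xi_j} - \mu_j^{(n-2)/2}H(\cdot, q_j)$. Thus I take as first approximation
$$U_*(x,t) = \sum_{j=1}^k \Bigl(U_{\mu_j(t),\xi_j(t)}(x) - \mu_j(t)^{(n-2)/2} H(x, q_j)\Bigr),$$
with parameters $\mu_j(t),\xi_j(t)$ to be found. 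A careful expansion of the error $E[U_*] := -\partial_t U_*^p + \Delta U_* + U_*^p$ shows that near $q_j$ the leading contribution comes from the Green-matrix interaction $-\sum_i \mathcal G(q)_{ji}\mu_i^{(n-2)/2}$ together with terms linear in $\dot\mu_j$ and $\dot\xi_j$, while away from the points $q_j$ the error is smooth and small.

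The second step is the inner-outer decomposition, in which I seek a solution of the form
$$u(x,t) = U_*(x,t) + \sum_{j=1}^k \eta_j(x)\,\phi_j\!\left(\frac{x-\xi_j(t)}{\mu_j(t)},\,t\right) + \psi(x,t),$$
with $\eta_j$ a smooth cutoff supported in a fixed ball around $q_j$. Substituting and grouping terms decouples the problem into $k$ inner equations posed in self-similar variables $y = (x-\xi_j)/\mu_j$, schematically of the form
$$p\,\mu_j^2\, U(y)^{p-1}\partial_t\phi_j = \Delta_y\phi_j + pU(y)^{p-1}\phi_j + B_j[\psi,\phi,\dot\mu_j,\dot\xi_j,\mu,\xi],$$
coupled with a single outer problem for $\psi$ on $\Omega$ with zero Dirichlet data, essentially an inhomogeneous linear heat equation with small perturbations and forcing concentrated near the $q_j$'s.

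The third step is the linear theory and the reduction to an ODE system for the parameters. The stationary operator $L_0\phi = \Delta_y\phi + pU(y)^{p-1}\phi$ on $\mathbb R^n$ has null space spanned by the scaling mode $Z_0(y)=\partial_\lambda U_{\lambda,0}|_{\lambda=1}$ and the $n$ translation modes $Z_i(y)=\partial_{\xi_i}U_{1,\xi}|_{\xi=0}$; the sharp solvability theory for critical parabolic problems yields a bounded inner solution in a suitable weighted $L^\infty$ norm provided the right-hand side is orthogonal to these modes. Imposing the $n+1$ orthogonality conditions per concentration point produces a system of ODEs for $(\mu_j,\xi_j)$: the translational conditions pin $\xi_j$ to $q_j$ with $\xi_j - q_j = o(\mu_j)$, while the scaling conditions reduce at leading order to a system whose principal part is governed by the matrix $\mathcal G(q)$ and whose balance forces
$$\mu_j(t) = \beta_j\, t^{-1/(n-2)}(1+o(1)),\qquad \beta_j > 0,$$
the positive constants $\beta_j$ being selected by the positive definiteness of $\mathcal G(q)$. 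The outer problem is then solved using the Dirichlet heat semigroup in weighted-in-time norms matched to the inner expansion.

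The scheme is closed by a contraction argument on a product Banach space for $(\phi_1,\ldots,\phi_k,\psi)$ coupled with small corrections to $(\mu_j,\xi_j)$, the smallness coming from the decay of $E[U_*]$ quantified in the first step. The main obstacle will be the reduction step: one needs sharp pointwise control of the inner profiles in weighted norms tailored to the very slow rate $t^{-1/(n-2)}$, together with a careful stable-manifold analysis of the reduced ODE around the explicit profile $\beta_j t^{-1/(n-2)}$, using the positive definiteness of $\mathcal G(q)$ to exclude growing modes of the linearization. Without this sharp reduction, the contraction cannot absorb the Green-function interaction term, which is precisely what determines the extinction rate in the original fast-diffusion variables.
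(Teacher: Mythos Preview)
Your overall architecture---projected bubbles as first approximation, inner-outer gluing, orthogonality conditions yielding an ODE system for $(\mu_j,\xi_j)$, and a final contraction---matches the paper's. But there are two genuine gaps that would block the argument as written.

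\textbf{The unstable mode.} You describe the kernel of $L_0\phi=\Delta_y\phi+pU^{p-1}\phi$ as the $(n{+}1)$-dimensional span of scaling plus translations, and impose exactly $n{+}1$ orthogonality conditions per point. This misses that the weighted eigenvalue problem $L_0\phi+\lambda U^{p-1}\phi=0$ on $L^2(U^{p-1}dy)$ has a \emph{negative} first eigenvalue $\lambda_0<0$ with simple eigenfunction $Z_0=U$, below the $(n{+}1)$-dimensional zero eigenspace. For the inner parabolic problem $pU^{p-1}\phi_t=L_0\phi+h$ this produces an exponentially growing mode, so no bounded-in-time solution exists unless the projection onto $Z_0$ is also controlled. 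The paper imposes $n{+}2$ orthogonality conditions ($j=0,1,\dots,n+1$); the extra condition on $Z_0$ cannot be absorbed by a parameter in the ansatz and is instead eliminated by choosing the inner initial data $\phi_j(\cdot,t_0)=e_{0,j}Z_0$ with $e_{0,j}$ determined a posteriori. This is exactly why the constructed solution is unstable (codimension $k$), and without it your contraction will fail: the inner linear operator is not invertible on the space you describe.

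\textbf{Degeneracy of the parabolic problems.} You treat the outer problem as ``essentially an inhomogeneous linear heat equation'' solvable by the Dirichlet heat semigroup, and the inner problem by standard critical heat-equation linear theory. But the time derivative in \eqref{e:main} carries the weight $u^{p-1}$: the inner equation is $pU^{p-1}\phi_t=\Delta\phi+pU^{p-1}\phi+h$ with $U^{p-1}(y)\sim|y|^{-4}$ at infinity, and the outer equation is $pz^{p-1}\psi_t=\Delta\psi+\cdots$ with $z$ vanishing on $\partial\Omega$. Neither is uniformly parabolic, and neither admits the off-the-shelf linear theory you invoke. The paper's device is to lift both to $\mathbb S^n$ via stereographic projection, where the conformal weight exactly cancels the degeneracy and one obtains a genuine heat equation $\tilde\phi_t=(\Delta_{\mathbb S^n}+n)\tilde\phi+\tilde h$; the weighted estimates are then proved by blow-up arguments on the sphere. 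Your ``sharp solvability theory for critical parabolic problems'' and ``Dirichlet heat semigroup'' do not apply directly here, and this is not a technicality: the degenerate coefficient is precisely what distinguishes the Yamabe-flow problem from the critical heat equation of \cite{delPinoMussoJEMS} and is responsible for the different blow-up rate $t^{-1/(n-2)}$ versus $t^{-1/(n-4)}$.
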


The behaviour of Yamabe flow was studied in \cite{ye1994global}, \cite{brendle2007convergence}, \cite{brendle2007short}, \cite{brendle2005convergence}, \cite{daskalopoulos2016type}, \cite{DaskalopoulosDelpino2018}, \cite{daskalopoulos2013extinction}, \cite{daskalopoulos2008extinction}, \cite{daskalopoulos2013classification}, \cite{struwe} (see also \cite{jin2014fractional} for a related flow). Especially, in the case of $\mathbb{S}^n$ with its standard Riemannian metric $g_{\mathbb{S}^n}$, the Yamabe flow evolving a conformal metric $g = v^{\frac{4}{n-2}}(\cdot, t)g_{\mathbb{S}^n}$ takes the following form
\begin{equation}\label{Yamabeflow}
(v^{\frac{n+2}{n-2}})_t = \Delta_{\mathbb{S}^n}v - c_n v,\quad c_n = \frac{n(n-2)}{4},
\end{equation}
which is equivalent to the problem
\begin{equation}\label{e:fastdiffusiononthewholespace}
\begin{cases}
\frac{\partial}{\partial t} u^{\frac{n+2}{n-2}} =\Delta u + u^{\frac{n+2}{n-2}} \text{ in } \mathbb{R}^n\times (0, \infty),\\
u(\cdot, 0) = u_0 \text{ in }\mathbb{R}^n
\end{cases}
\end{equation}
via the stereographic projection and cylindrical changes of variables. It was proved in \cite{ye1994global}, \cite{brendle2007short} that the Yamabe flow (\ref{Yamabeflow}) has a global solution, which converges exponentially to a steady solution. In \cite{delpinoSaezIUMJ2001}, del Pino and Saez showed that solutions for problem (\ref{e:fastdiffusiononthewholespace}) approach non-trivial steady states of the semilinear elliptic equation
$$
\Delta u + u^{\frac{n+2}{n-2}} = 0 \text{ on }\mathbb{R}^n.
$$
Theorem \ref{t:main} tells us that when we consider the Yamabe flow equation on a bounded domain with Dirichlet boundary condition, infinite time blow-up phenomenon can occur.

In the beautiful work \cite{DaskalopoulosDelpino2018}, Daskalopoulos, del Pino and Sesum constructed a new class of type II ancient solutions to the  Yamabe flow; these solutions are rotationally symmetric and converge to a tower of spheres when $t\to -\infty$. Note that Theorem \ref{t:main} is on a bounded domain with Dirichlet boundary condition and  the solutions we find blow up at different points when the time $t\to +\infty$. In the recent paper \cite{delpinoMussoWei}, bubble tower solutions for the energy critical heat equation were constructed; we conjecture that bubble tower solutions for Problem (\ref{e:main}) also exist.

Infinite time blowing-up solutions for the energy critical heat equation with Dirichlet boundary condition
\begin{equation*}
\begin{cases}
\frac{\partial}{\partial t} u =\Delta u + u^{\frac{n+2}{n-2}} \text{ in } \Omega\times (0, \infty),\\
u(\cdot, t) = 0 \text{ on }\partial\Omega,\\
u(\cdot, 0) = u_0 \text{ in }\Omega
\end{cases}
\end{equation*}
of form (\ref{infinitetimeblowup}) are constructed in the seminal work \cite{delPinoMussoJEMS} when $n\geq 5$. Note that the corresponding blow-up rates are $\mu_j(t)\sim b_jt^{-\frac{1}{n-4}}(1+o(1))$ as $t\to +\infty$.

To prove Theorem \ref{t:main}, we use the gluing method in the spirit of \cite{delPinoMussoJEMS} and \cite{Davila2019}, which has been applied to various parabolic problems in recent years, such as finite time and infinite time blow-up solutions for energy critical heat equations \cite{delPinoMussoJEMS}, \cite{del2019type}, \cite{delpinoMussoWei}, \cite{delpinoMussoWei11}, \cite{delpinoMussoWei22}, ancient solutions of the Yamable flow \cite{DaskalopoulosDelpino2018}, singularity formation for the harmonic map heat flow \cite{Davila2019} and so on. In the survey paper by del Pino \cite{delpinosurvey}, there are  more results on the gluing method and its applications.

In the proof of Theorem \ref{t:main}, we first construct an approximation to the exact solution with sufficiently small error, then, by linearization around the bubble and fixed point theorem, we solve for a small remainder term. In the linear theory, we use blow-up arguments; the main difficulty is that the parabolic problem is degenerate, the linear equation is lifted onto the standard sphere $\mathbb{S}^n$, which then becomes a non-degenerate parabolic equation. Finally, based on the linear theory, we solve the nonlinear problem by the contraction mapping theorem. The orthogonality conditions are satisfied by solving an ODE system of the scaling and translation parameter functions.

\begin{remark}
The spectrum of the following degenerate elliptic operator
\begin{equation*}
L_0[\phi] = -\frac{1}{U^{p-1}}\left(\Delta\phi + pU^{p-1}\phi\right)
\end{equation*}
plays an important role in the linear theory. Since there is a negative eigenvalue for $L_0$ with multiplicity one (see Section 2), our solution constructed in Theorem \ref{t:main} is unstable. Indeed, from the proof the Theorem \ref{t:main} and the same arguments as in \cite{delPinoMussoJEMS}, there exists a submanifold $\mathcal{M}$ in the function space $X:=\{u\in C^1(\overline\Omega):u|_{\partial\Omega} = 0\}$ with codimension $k$ and containing $u_q(x, 0)$ such that, if $u_0$ is a small perturbation of $u_q(x,0)$ in $\mathcal{M}$, then the corresponding solution $u(x, t)$ to (\ref{e:main}) still has the asymptotic form
\begin{equation*}
u(x,t) =\sum_{j= 1}^k\left(\alpha_{n}\left(\frac{\hat{\mu}_j(t)}{\hat{\mu}_j^2(t) + |x-\hat{\xi}_j(t)|^2}\right)^{\frac{n-2}{2}}-\hat{\mu}_j^{\frac{n-2}{2}}(t)H(x, \hat{q}_j)\right)+\hat{\mu}_j^{\frac{n-2}{2}}(t)\hat{\varphi}(x, t),
\end{equation*}
the points $\hat{q}_j$ are close to $q_j$ for $j = 1,\cdots,k$. This is different from the ancient solution case; the effect of the negative eigenvalue can be dealt with by adding an additional parameter function which tends to 0 as $t\to -\infty$ (tends to $+\infty$ as $t\to +\infty$), see \cite{DaskalopoulosDelpino2018}.
\end{remark}

The paper is organized as follows: in Section 1, we build the approximate solution and provide a sketch of the inner-outer parabolic gluing, which gives a road map towards the proofs of our main statements. Section 2 is concerned with the outer problem whereas Section 3 is devoted to the inner one. Section 3 finishes with the complete proof of our results. 

\section{The approximate solution and the inner-outer gluing scheme}
\subsection{The approximate solution}
Let $t_0 > 0$ be a large number to be chosen later and consider the following problem
\begin{equation}\label{e:main000}
\begin{cases}
(u^p)_t=\Delta u+u^p \text{ in } \Omega\times (t_0, \infty),\\
 u = 0\text{ on } \partial\Omega\times (t_0, \infty),
\end{cases}
\end{equation}
for $p = \frac{n+2}{n-2}$. Let $q_1, \cdots, q_k\in\mathbb{R}^n$ be $k$ fixed points, we are going to find a positive solution to (\ref{e:main000}) of form
\begin{equation*}
u(x, t) \approx \sum_{j=1}^kU_{\mu_j(t), \xi_j(t)}(x)
\end{equation*}
with $\xi_j(t)\to q_j$, $\mu_j(t)\to 0$ as $t\to\infty$ for all $j = 1,\cdots, k$ and $U_{\mu_j(t), \xi_j(t)}(x) = \mu_j(t)^{-\frac{n-2}{2}}U\left(\frac{x-\xi_j(t)}{\mu_j(t)}\right)$, $U(y) = \alpha_n\left(\frac{1}{1+|y|^2}\right)^{\frac{n-2}{2}}$, which then provides a solution $u(x, t) = u(x, t-t_0)$ to the original problem (\ref{e:main}). Denote the error operator as follows
\begin{equation*}
S(u):= -(u^p)_t+\Delta u + u^p.
\end{equation*}
Then we have
\begin{eqnarray*}
\begin{aligned}
S(U_{\mu_j(t), \xi_j(t)}) &= -\frac{\partial }{\partial t}U^p_{\mu_j,\xi_j}(x) = \mu^{-\frac{n+2}{2}}_jU(y_j)^{p-1}\left(\frac{\dot{\mu_j}}{\mu_j} Z_{n+1}(y_j)  + \frac{\dot{\xi_j}}{\mu_j}\cdot \nabla U(y_j)\right)\\
& = \mu^{-\frac{n+2}{2}-1}_jU(y_j)^{p-1}\left(\dot{\mu}_j Z_{n+1}(y_j)  + \dot{\xi}_j\cdot \nabla U(y_j)\right)
\end{aligned}
\end{eqnarray*}
for $y_j = \frac{x-\xi_j(t)}{\mu_j(t)}$. Since $u = 0$ on $\partial\Omega$, a natural better approximation than $\sum_{j=1}^kU_{\mu_j(t), \xi_j(t)}(x)$ should be
\begin{equation}\label{e2:3}
\tilde z(x, t) = \sum_{j=1}^k\tilde z_j(x, t)~\mbox{with}~ \tilde z_j(x, t):=U_{\mu_j,\xi_j}(x) -  \mu_j^{\frac{n-2}{2}} H_{\mu_j}(x, q_j).
\end{equation}
Here for fixed $y\in \Omega$, $H_{\mu_j}(x, y)$ satisfies
$\Delta_x H_{\mu_j}(x, y) = 0$ in $\Omega$, $H_{\mu_j}(x, y) = \frac{(n(n-2))^{\frac{n-2}{4}}}{(\mu_j^2+|x-y|^2)^{\frac{n-2}{2}}}\text{ for } x\in \partial\Omega$. Then from the equation satisfied by $U_{\mu_j(t), \xi_j(t)}(x)$ and the fact that $H_{\mu_j}(x, q)$ is a harmonic function, the error of $\tilde z$ is
\begin{equation}\label{e2:3333}
S(\tilde z) = -\sum_{i=1}^k\partial_t\tilde z_i^p + \left(\sum_{i=1}^k\tilde z_i\right)^p - \sum_{i=1}^kU^p_{\mu_i,\xi_i}.
\end{equation}
Moreover, by the same arguments as that of \cite{delPinoMussoJEMS}, for a fixed index $j$, in the region $|x-q_j|\leq \frac{1}{2}\min_{i\neq l}|q_i - q_l|$, set $x = \xi_j + \mu_j y_j$, there holds
\begin{equation*}
S[\tilde z] = \mu_j^{-\frac{n+2}{2}}(\mu_j E_{0j} + \mu_jE_{1j} + \mathcal{R}_j)
\end{equation*}
with
\begin{equation*}
\begin{aligned}
E_{0j}&= pU(y_j)^{p-1}\left[-\mu_j^{n-3}H(q_j, q_j) + \sum_{i\neq j}\mu_j^{\frac{n-2}{2}-1}\mu_i^{\frac{n-2}{2}}G(q_j, q_i)\right]\\
&\quad +\mu_j^{-2}\dot{\mu}_jpU(y_j)^{p-1}Z_{n+1}(y_j),
\end{aligned}
\end{equation*}
\begin{equation*}
\begin{aligned}
E_{1j} &= pU(y_j)^{p-1}\left[-\mu_j^{n-2}\nabla H(q_j, q_j)+ \sum_{i\neq j}\mu_j^{\frac{n-2}{2}}\mu_i^{\frac{n-2}{2}}\nabla G(q_j, q_i)\right]\cdot y_j\\
&\quad + \mu^{-2}_jpU(y_j)^{p-1}\dot{\xi}_j\cdot \nabla U(y_j)
\end{aligned}
\end{equation*}
and
\begin{equation*}
\mathcal{R}_j = \frac{\mu_0^{n}g}{1+|y_j|^{2}}+\frac{\mu_0^{n-2}\vec{g}}{1+|y_j|^{4}}\cdot(\xi_j-q_j) + \mu_0^{n+2}f + \mu_0^{n-1}\sum_{i=1}^k\frac{\dot{\mu}_if_i}{1+|y_j|^{4}} + \mu_0^{n}\sum_{i=1}^k\frac{\dot{\xi}_i\cdot\vec{f}_i}{1+|y_j|^{4}},
\end{equation*}
where $f$, $f_i$, $\vec{f}_i$, $g$ and $\vec{g}$ are smooth and bounded functions of $(y, \mu_0^{-1}\mu, \xi, \mu_jy_j)$. Here $H(x, y)$ is the regular part of the Green's function on $\Omega$ with Dirichlet boundary condition, i.e., for fixed $y\in \Omega$, $H(x, y)$ satisfies $\Delta_x H(x, y) = 0$ in $\Omega$, $H(x, y) = \frac{(n(n-2))^{\frac{n-2}{4}}}{|x-y|^{n-2}}$ for $x\in \partial\Omega$.

Suppose $u = \tilde z + \tilde \phi$ is the exact solution of (\ref{e:main000}) and write $\tilde{\phi}(x, t)$ in self-similar form around the point $q_j$,
\begin{equation}\label{e2:29}
\tilde{\phi}(x, t) = \mu_j^{-\frac{n-2}{2}}\phi\left(\frac{x-\xi_j}{\mu_j}, t\right).
\end{equation}
Then we have
\begin{equation}\label{e2:30}
\begin{aligned}
&0=\mu_j^{\frac{n+2}{2}}S[\tilde z+\tilde{\phi}]\\
&\quad = -pU^{p-1}(y)\partial_t\phi + \Delta_y\phi + pU(y)^{p-1}\phi + \mu_j^{\frac{n+2}{2}}S[\tilde z] + A[\phi]
\end{aligned}
\end{equation}
with $A[\phi]$ being a high order term. To improve the approximation error, we require $\phi(y, t)$ equals (at main order) to the solution $\phi_{0j}(y, t)$ of the following equation
\begin{equation}\label{e2:31}
-pU^{p-1}(y)\partial_t\phi_{0j} + \Delta_y\phi_{0j} + pU(y)^{p-1}\phi_{0j} = -\mu_j^{\frac{n+2}{2}}S[\tilde z]\text{ in }\mathbb{R}^n.
\end{equation}
Near the blow-up point $q_j$, equation (\ref{e2:31}) is mainly an elliptic problem of form
\begin{equation}\label{e2:32}
L_0[\phi]:= \frac{1}{U^{p-1}}\left(\Delta_y\phi + pU(y)^{p-1}\phi\right) = h(y)\text{ in }\mathbb{R}^n,\,\,\psi(y)\to 0\text{ as }|y|\to \infty.
\end{equation}
Consider the eigenvalue problem $L_0[\phi] + \lambda \phi = 0$ on the weighted space $L^2(U^{p-1}dx)$, which has an infinite sequence of eigenvalues
\begin{equation*}
\lambda_{0} < \lambda_1 = \cdots = \lambda_n = \lambda_{n+1} = 0 < \lambda_{n+2} < \lambda_{n+3} < \cdots,
\end{equation*}
the associated eigenfunctions $Z_j$, $j = 0, 1,\cdots$ constitute an orthonormal basis of $L^2(U^{p-1}dx)$. It is well known that $\lambda_0$ is simple and $Z_0(y) = U(y)$. We refer the interested readers to the well written paper \cite{DaskalopoulosDelpino2018} and \cite{BonforteFigalli2019} for more properties on this operator.
Therefore every bounded solution of $L_0[\phi] = 0$ in $\mathbb{R}^n$ is the linear combination of the functions
$$Z_1,\cdots, Z_{n+1},$$
where
\begin{equation*}
Z_i(y):= \frac{\partial U}{\partial y_i}(y),\quad i = 1,\cdots, n, \quad Z_{n+1}(y):=\frac{n-2}{2}U(y) + y\cdot\nabla U(y).
\end{equation*}
Furthermore, problem (\ref{e2:32}) is solvable if the following conditions
\begin{equation*}
\int_{\mathbb{R}^n}h(y)Z_i(y)U^{p-1}(y)dy = 0\quad\text{for all}\quad i = 1,\cdots, n+1
\end{equation*}
hold.

Now we consider the solvability condition for equation (\ref{e2:31}) with $i = n+1$,
\begin{equation}\label{e2:35}
\int_{\mathbb{R}^n}\mu_j^{\frac{n+2}{2}}S[\tilde z](y, t)Z_{n+1}(y)dy = 0.
\end{equation}
We claim that if one choose $\mu_{0j} = b_j\mu_0(t)$ for some positive constants $b_j$, $j = 1, \cdots, k$ to be determined later, $\mu_0(t) = \gamma_{n} t^{-\frac{1}{n-2}}$ and $\gamma_{n}$ is a positive constant depending only on $n$, identity (\ref{e2:35}) holds at main order. Observe that the main contribution term to the integral on the left hand side of (\ref{e2:35}) is
\begin{equation*}
\begin{aligned}
E_{0j}&= pU(y_j)^{p-1}\left[-\mu_j^{n-3}H(q_j, q_j) + \sum_{i\neq j}\mu_j^{\frac{n-2}{2}-1}\mu_i^{\frac{n-2}{2}}G(q_j, q_i)\right]\\
&\quad +\mu_j^{-2}\dot{\mu}_jU(y_j)^{p-1}Z_{n+1}(y_j).
\end{aligned}
\end{equation*}
Then direct computations yield the following
\begin{equation*}\label{e2:36}
\begin{aligned}
&\int_{\mathbb{R}^n}\mu_j^{2}(t)E_{0j}(y, t)Z_{n+1}(y)dy\\
& \approx  c_1\left[\mu_j^{n-1}H(q_j, q_j) - \sum_{i\neq j}\mu_j^{\frac{n-2}{2}+1}\mu_i^{\frac{n-2}{2}}G(q_j, q_i)\right]+ c_2
\dot\mu_j
\end{aligned}
\end{equation*}
with
\begin{equation*}
c_1 = -p\int_{\mathbb{R}^n}U(y)^{p-1}Z_{n+1}(y)dy,
\end{equation*}
\begin{equation*}
c_2 = \int_{\mathbb{R}^n}U(y)^{p-1}\left|Z_{n+1}(y)\right|^2dy.
\end{equation*}
Note that $c_1$, $c_2$ are finite positive numbers since we assume that $n\geq 3$. Set
\begin{equation*}
\mu_j(t) = b_j\mu_0(t).
\end{equation*}
Then (\ref{e2:35}) holds at main order if we have the following identities,
\begin{equation}\label{e2:39}
b_j^{n-2}H(q_j, q_j) - \sum_{i\neq j}(b_ib_j)^{\frac{n-2}{2}}G(q_j, q_i)+c_2c_1^{-1}\mu_0^{1-n}\dot\mu_0 = 0\text{ for all } j = 1,\cdots, k.
\end{equation}
Set $c_2c_1^{-1}\mu_0^{1-n}\dot\mu_0=-\frac{2}{n-2}$, we then have
\begin{equation}\label{e2:40}
\dot{\mu}_0(t) = -\frac{2c_1c_2^{-1}}{n-2}\mu_0^{n-1}(t),
\end{equation}
with the solution $\mu_0(t) = \left(\frac{c_1^{-1}c_2}{2}\right)^{\frac{1}{n-2}}t^{-\frac{1}{n-2}}$. Furthermore, from the identities \eqref{e2:39} and \eqref{e2:40}, the constants $b_j$ must satisfy the following system
\begin{equation}\label{e2:42}
b_j^{n-3}H(q_j, q_j) - \sum_{i\neq j}b_j^{\frac{n-2}{2}-1}b_i^{\frac{n-2}{2}}G(q_j, q_i) = \frac{2}{n-2}\frac{1}{b_j}\text{ for all } j = 1,\cdots, k.
\end{equation}
System (\ref{e2:42}) can be viewed as the Euler-Lagrangian equation $\nabla_bI(b) = 0$ for the functional
\begin{equation*}\label{e2:43}
I(b): = \frac{1}{n-2}\left[\sum_{j=1}^kb_j^{n-2}H(q_j, q_j) - \sum_{i\neq j}b_j^{\frac{n-2}{2}}b_i^{\frac{n-2}{2}}G(q_j, q_i)-\sum_{j=1}^k\ln b_j^2\right].
\end{equation*}
Set $\Lambda_j = b_j^{\frac{n-2}{2}}$, then we have
\begin{equation*}
(n-2)I(b)=\tilde{I}(\Lambda)=\left[\sum_{j=1}^kH(q_j, q_j)\Lambda_j^2 - \sum_{i\neq j}G(q_j, q_i)\Lambda_i\Lambda_j-\sum_{j=1}^k\ln \Lambda_j^{\frac{4}{n-2}}\right].
\end{equation*}
By the same arguments as \cite{delPinoMussoJEMS}, system (\ref{e2:42}) possesses a unique solution with all its components be positive if and only if the matrix
\begin{eqnarray*}
\mathcal{G}(q) = \left[
\begin{matrix}
H(q_1, q_1)&-G(q_1, q_2)&\cdots & -G(q_1, q_k)\\
-G(q_2, q_1)&H(q_2, q_2)&\cdots &-G(q_2, q_k)\\
\vdots &\vdots & \ddots &\vdots\\
-G(q_k,q_1)&-G(q_k, q_2)&\cdots&H(q_k, q_k)
\end{matrix}
\right]
\end{eqnarray*}
is positive definite.
For the following solvability conditions of (\ref{e2:31}),
\begin{equation*}
\int_{\mathbb{R}^n}\mu_j^{\frac{n+2}{2}}S(\tilde z)(y, t)Z_{i}(y)dy = 0,\quad i = 1, \cdots, n,
\end{equation*}
choose $\xi_{0j} = q_j$, then these identities can be satisfied at main order. Now we denote
\begin{equation*}
\bar{\mu}_0 = (\mu_{01},\cdots, \mu_{0k}) = (b_1\mu_0,\cdots, b_k\mu_0)
\end{equation*}
and let $\Phi_j$ be the unique solution of (\ref{e2:31}) for $\mu = \bar{\mu}_0$. Then
\begin{equation*}
\Delta_y\Phi_j + pU(y)^{p-1}\Phi_j = -\mu_{0j}E_{0j}[\bar{\mu}_0,\dot{\mu}_{0j}]\text{ in }\mathbb{R}^n,\,\,\Phi_j(y, t)\to 0\text{ as }|y|\to \infty.
\end{equation*}
From the definitions of $\mu_0$ and $b_j$ as above, there holds
\begin{equation*}
\mu_{0j}E_{0j}=-\tilde\gamma_j\mu_0^{n-2}q_0(y),
\end{equation*}
where $\tilde\gamma_j$ is a positive constant and
\begin{equation*}\label{e2:50}
q_0(y): = pU(y)^{p-1}c_2 + c_1U(y)^{p-1}Z_{n+1}(y).
\end{equation*}
Let $p_0 = p_0(|y|)$ be the unique solution of $\Delta_y\Phi + pU(y)^{p-1}\Phi = q_0$, then $p_0(|y|) = O(|y|^{-2})$ as $|y|\to \infty$ and
\begin{equation*}\label{e2:51}
\Phi_j(y, t) = \tilde \gamma_j\mu_0^{n-2}p_0(y).
\end{equation*}
Now we define the improved approximation as follows
\begin{equation*}\label{e2:52}
z(x, t) = \tilde z(x, t) + \tilde{\Phi}(x, t)
\end{equation*}
with
\begin{equation*}
\tilde{\Phi}(x, t) = \sum_{j=1}^k\mu_j^{-\frac{n-2}{2}}\eta_0(x-q_j)\Phi_j\left(\frac{x-\xi_j}{\mu_j}, t\right)
\end{equation*}
and $\eta_0(x)$ is a smooth function defined on $\mathbb{R}^n$ which equals to $0$ for $x\in \mathbb{R}^n\setminus B_{\epsilon}(0)$ and equals to $1$ for $x\in B_{\frac{\epsilon}{2}}(0)$, $\epsilon > 0$ is a small but fixed positive number satisfying $0 < \epsilon < \frac{1}{2}\min\{\min_{i\neq l, i, l = 1,\cdots, k}|q_i-q_l|, \min_{i=1,\cdots, k} dist(q_i, \partial\Omega)\}$.
Here $dist(x, \partial\Omega)$ means the distance of $x$ to the boundary $\partial\Omega$ of $\Omega$. Finally we set
\begin{equation*}
\mu(t) = \bar{\mu}_0 + \lambda(t)\text{ with }\lambda(t) = (\lambda_1(t),\cdots, \lambda_k(t)).
\end{equation*}
Then the following result on the estimate of $S[z]$ holds.
\begin{lemma}\label{l2.20000}
For a fixed index $j$ and in the region $|x-q_j|\leq \frac{1}{2}\min\{\min_{i\neq l, i, l = 1,\cdots, k}|q_i-q_l|, \min_{i=1,\cdots, k} dist(q_i, \partial\Omega)\}$, $S[z]$ has the following expansion form
\begin{equation*}
\begin{aligned}
& S[z] =\sum_{j=1}^k\mu_j^{-\frac{n+2}{2}}\Bigg\{\mu_{0j}^{-1}\dot{\lambda}_jpU(y_j)^{p-1}Z_{n+1}(y_j)-2\mu_{0j}^{-2}b_j\dot\mu_0\lambda_jpU(y_j)^{p-1}Z_{n+1}(y_j)\\
&\quad\quad\quad -\mu_{0j}\mu_0^{n-4}pU(y_j)^{p-1}\sum_{i=1}^k\mathcal M_{ij}\lambda_i+\mu_j^{-2}pU(y_j)^{p-1}\dot{\xi}_j\cdot \nabla U(y_j)\\
&\quad\quad\quad +\mu_jpU(y_j)^{p-1}\Big[-\mu_j^{n-2}\nabla H(q_j, q_j) + \sum_{i\neq j}\mu_j^{\frac{n-2}{2}}\mu_i^{\frac{n-2}{2}}\nabla G(q_j, q_i)\Big]\cdot y_j\Bigg\}\\
&\quad\quad\quad + \sum_{j=1}^k\mu_j^{-\frac{n+2}{2}}\lambda_jb_j\Bigg[b_j^{-2}\mu_0^{-2}\dot{\mu}_0pU(y_j)^{p-1}Z_{n+1}(y_j)\\
&\quad\quad\quad +pU(y_j)^{p-1}\mu_0^{n-3}\bigg(-b_j^{n-4}H(q_j, q_j) + \sum_{i\neq j}b_j^{\frac{n-6}{2}}b_i^{\frac{n-2}{2}}G(q_j, q_i)\bigg)\Bigg]\\
&\quad\quad\quad +\mu_0^{-\frac{n+2}{2}}\Bigg[\sum_{j=1}^k\frac{\mu_0^{n}g_j}{1+|y_j|^{2}}+
\sum_{j=1}^k\frac{\mu_0^{2n-4}g_j}{1+|y_j|^{2}} + \sum_{j=1}^k\frac{\mu_0^{n-2}g_j}{1+|y_j|^{4}}\lambda_j\Bigg]\\
&\quad\quad\quad +\mu_0^{-\frac{n+2}{2}}\Bigg[\sum_{j=1}^k\frac{\mu_0^{n-2}\vec{g}_j}{1+|y_j|^{4}}\cdot(\xi_j-q_j)\Bigg]\\
&\quad\quad\quad +\mu_0^{-\frac{n+2}{2}}\left[\mu_0^{n-2}\sum_{i,j,l=1}^kpU(y_j)^{p-1}f_{ijl}\lambda_i\lambda_l +\sum_{i,j,l=1}^k\frac{f_{ijl}}{1+|y_j|^{n-2}}\lambda_i\dot{\lambda}_l\right]\\
&\quad\quad\quad + \mu_0^{-\frac{n+2}{2}}\left[\mu_0^{n+2}f + \mu_0^{n-1}\sum_{i=1}^k\dot{\mu}_if_i + \mu_0^{n}\sum_{i=1}^k\dot{\xi}_i\vec{f}_i\right],
\end{aligned}
\end{equation*}
where $x = \xi_j + \mu_jy_j$, $\vec{f}_i$, $f_i$, $f$, $f_{ijl}$, $g_j$ and $\vec{g}_j$ are smooth bounded functions of $(\mu_0^{-1}\mu, \xi, x)$, for $i = j$,
$$
\mathcal M_{ij} = (n-3)b_j^{n-4}H(q_j, q_j) - (\frac{n-2}{2}-1)\sum_{i\neq j}b_j^{\frac{n-2}{2}-2}b_i^{\frac{n-2}{2}}G(q_j, q_i),
$$
for $i\neq j$,
$$
\mathcal M_{ij} = - \frac{n-2}{2}\sum_{i\neq j}b_j^{\frac{n-2}{2}-1}b_i^{\frac{n-2}{2}-1}G(q_j, q_i).
$$
\end{lemma}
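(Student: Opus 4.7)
The plan is to compute $S[z]$ directly by expanding around the lower-order approximation $\tilde z$ and then linearizing in the parameter $\lambda=\mu-\bar\mu_0$. Writing $z=\tilde z+\tilde\Phi$ and using Taylor expansion of $u\mapsto u^p$ and $u\mapsto(u^p)_t$, I would first establish the identity
\begin{equation*}
S[z] \;=\; S[\tilde z] \;+\; \bigl[\Delta\tilde\Phi+p\tilde z^{p-1}\tilde\Phi\bigr] \;-\; \partial_t\bigl(p\tilde z^{p-1}\tilde\Phi\bigr) \;+\; \mathcal N[\tilde\Phi],
\end{equation*}
where $\mathcal N[\tilde\Phi]$ collects the higher-order nonlinear pieces (at least quadratic in $\tilde\Phi$) together with the commutator errors generated by the cutoff $\eta_0$. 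Since $\tilde\Phi_j=O\bigl(\mu_j^{-(n-2)/2}\mu_0^{n-2}(1+|y_j|)^{-2}\bigr)$ on the support of $\eta_0(\cdot-q_j)$, these contributions fit into the smooth remainder templates $\mu_0^{n+2}f$, $\mu_0^{n-1}\dot\mu_if_i$, $\mu_0^n\dot\xi_i\vec f_i$ listed on the last line of the statement.

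In the inner region $|x-q_j|\le\varepsilon$, where $\eta_0\equiv 1$, I would then use the scaling $y_j=(x-\xi_j)/\mu_j$ together with the defining equation $\Delta_y\Phi_j+pU(y_j)^{p-1}\Phi_j=-\mu_{0j}E_{0j}[\bar\mu_0,\dot\mu_{0j}]$ to rewrite
\begin{equation*}
\Delta\tilde\Phi+p\tilde z^{p-1}\tilde\Phi \;=\; -\mu_j^{-(n+2)/2}\,\mu_{0j}\,E_{0j}[\bar\mu_0,\dot\mu_{0j}] \;+\; \text{h.o.t.}
\end{equation*}
Combined with the known expansion $S[\tilde z]=\mu_j^{-(n+2)/2}\bigl(\mu_jE_{0j}[\mu,\dot\mu_j]+\mu_jE_{1j}+\mathcal R_j\bigr)$, the leading piece of $S[z]$ becomes
\begin{equation*}
\mu_j^{-(n+2)/2}\Bigl[\mu_jE_{0j}[\mu,\dot\mu_j]\;-\;\mu_{0j}E_{0j}[\bar\mu_0,\dot\mu_{0j}]\;+\;\mu_jE_{1j}\;+\;\mathcal R_j\Bigr].
\end{equation*}

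The heart of the proof is then a careful Taylor expansion in $(\lambda,\dot\lambda)$ using $\mu_j=\mu_{0j}+\lambda_j$ and $\dot\mu_j=b_j\dot\mu_0+\dot\lambda_j$. For the $Z_{n+1}$-proportional piece of $E_{0j}$, expanding $\mu_j^{-1}\dot\mu_j$ about $(\mu_{0j},\dot\mu_{0j})$ produces the linear-in-$\lambda$ contributions $\mu_{0j}^{-1}\dot\lambda_j$ and a multiple of $-\mu_{0j}^{-2}b_j\dot\mu_0\lambda_j$, plus quadratic remainders of the type $\lambda_i\dot\lambda_l/(1+|y_j|^{n-2})$ listed in the statement. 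For the algebraic part of $E_{0j}$ involving $H$ and $G$, linearization in $\mu$ generates, after using the stationarity identity \eqref{e2:42} to reorganize coefficients, exactly the matrix contribution $-\mu_{0j}\mu_0^{n-4}pU(y_j)^{p-1}\sum_i\mathcal M_{ij}\lambda_i$, together with the separately-recorded $\lambda_jb_j[\,\cdots]$ bracket, which is $b_j^{-1}\lambda_j$ times the left-hand side of \eqref{e2:39} and is listed as such for bookkeeping. The piece $\mu_jE_{1j}$ yields the $\nabla H(q_j,q_j)$, $\nabla G(q_j,q_i)$ and $\mu_j^{-2}\dot\xi_j\cdot\nabla U$ terms verbatim, while $\mathcal R_j$ (after converting the prefactor $\mu_j^{-(n+2)/2}$ into $\mu_0^{-(n+2)/2}$, which introduces only further harmless corrections of the listed templates) is absorbed into the $g_j$, $\vec g_j$, $f_{ijl}$ and $f, f_i, \vec f_i$ remainders on the last lines.

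The main obstacle I expect is the bookkeeping of sub-leading remainders: checking that each Taylor-tail term generated above genuinely fits one of the prescribed templates, with smooth bounded $f, f_i, \vec f_i, f_{ijl}, g_j, \vec g_j$ depending only on $(\mu_0^{-1}\mu,\xi,x)$, and keeping track of which remainders carry a factor $1/(1+|y_j|^2)$ versus $1/(1+|y_j|^4)$. A subtle point is the interaction between inner and outer regions through the cutoff $\eta_0$: the derivatives $\nabla\eta_0,\Delta\eta_0$ are supported on the annulus $\varepsilon/2\le|x-q_j|\le\varepsilon$, where both $U_{\mu_j,\xi_j}$ and $\mu_j^{-(n-2)/2}\Phi_j(y_j,t)$ are very small in $\mu_0$; one checks that these commutator errors are absorbed into the exponentially small $\mu_0^{n+2}f$ term. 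The rest of the argument is patient bookkeeping of powers of $\mu_0$, $\lambda$, $\dot\lambda$, and $(1+|y_j|)$.
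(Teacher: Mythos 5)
Your outline is essentially the proof the paper intends: the paper omits the argument and refers to \cite{delPinoMussoJEMS}, and your decomposition $S[z]=S[\tilde z]+\bigl(\Delta\tilde\Phi+p\tilde z^{p-1}\tilde\Phi\bigr)-\partial_t\bigl(p\tilde z^{p-1}\tilde\Phi\bigr)+\mathcal N[\tilde\Phi]$, the use of the elliptic equation for $\Phi_j$, and the Taylor expansion in $(\lambda,\dot\lambda)$ with the relations \eqref{e2:39}--\eqref{e2:42} organizing the $\mathcal M_{ij}$ and $\lambda_jb_j[\cdots]$ terms is exactly that computation. One cosmetic correction: the cutoff commutator terms on the annulus are not exponentially small but only of order $\mu_0^{(n+2)/2}$ (since $\tilde\Phi_j\sim\mu_j^{-\frac{n-2}{2}}\mu_0^{n-2}|y_j|^{-2}$ there), which is still precisely the size of the $\mu_0^{-\frac{n+2}{2}}\mu_0^{n+2}f$ template you absorb them into, so the conclusion stands.
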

\noindent The proof is the same as that of \cite{delPinoMussoJEMS}, so we omit it here.
\subsection{The inner-out gluing scheme}
Now we use the ansatz
\begin{equation*}
u(x, t) = \sum_{j=1}^kz_j(x, t) + \psi(x, t)
\end{equation*}
for $z_j(x, t) = U_{\mu_j,\xi_j}(x) -  \mu_j^{\frac{n-2}{2}} H(x, q_j) + \mu_j^{-\frac{n-2}{2}}\Phi_j\left(\frac{x-\xi_j}{\mu_j}, t\right)$, with this setting, problem (\ref{e:main000}) becomes
\begin{equation*}
-\left(\left(z+\tilde{\phi}\right)^p\right)_t+\Delta \left(z+\tilde{\phi}\right)+\left(z+\tilde{\phi}\right)^p = 0,
\end{equation*}
which can be linearized as
\begin{equation}\label{e:main01}
\begin{aligned}
-pz^{p-1}\tilde\phi_t +\Delta\tilde{\phi} & + pz^{p-1}\tilde\phi + S[z] + N[\tilde{\phi}] - \left(N[\tilde{\phi}]\right)_t -\left(pz^{p-1}\right)_t\tilde\phi = 0.
\end{aligned}
\end{equation}
Here we denote
\begin{equation*}
N[\tilde{\phi}] = \left(z+\tilde{\phi}\right)^p-z^p-pz^{p-1}\tilde\phi.
\end{equation*}
Using the inner outer gluing method (see, for example, \cite{delPinoMussoJEMS} and \cite{Davila2019}), we write
\begin{equation*}
\tilde{\phi}(x, t) = \psi(x, t) + \phi^{in}(x, t)
\end{equation*}
with
\begin{equation*}
\phi^{in}(x, t) := \sum_{j=1}^k\eta_{j, R}(x, t)\tilde{\phi}_j(x, t)
\end{equation*}
\begin{equation*}
\tilde{\phi}_j(x, t) = \mu_{0j}^{-\frac{n-2}{2}}\phi\left(\frac{x-\xi_j}{\mu_{0j}}, t\right)
\end{equation*}
and
\begin{equation*}
\eta_{j, R} = \eta\left(\frac{x-\xi_j}{R\mu_{0j}}\right).
\end{equation*}
Here $\eta(s)$ is a cut-off function satisfying $\eta(s) = 1$ for $s < 1$ and $ = 0$ for $s > 2$. The positive number $R$ is independent of $t$ but sufficiently large, for convenience, we choose it as
\begin{equation}\label{e:definitionofR}
R = t_0^\varepsilon, \text{ with }0 < \varepsilon \ll 1.
\end{equation}
Then $\tilde\phi$ solves equation (\ref{e:main01}) if $\psi$ and $\tilde{\phi}^{in}$ satisfies the following system of two equations respectively
\begin{equation}\label{e:outerproblem}
\begin{cases}
\begin{aligned}
pz^{p-1}\psi_t &= \Delta\psi + V_{\mu, \xi}\psi+ \sum_{j=1}^k\left[2\nabla\eta_{j, R}\nabla_x\tilde\phi_j + \tilde\phi_j\left(\Delta_x-pU^{p-1}_j\partial_t\right)\eta_{j, R}\right]\\
&\quad + S^{*,out}_{\mu, \xi} + N[\tilde{\phi}] - \left(N[\tilde{\phi}]\right)_t -\left(pz^{p-1}\right)_t\tilde\phi\\
&\quad -pz^{p-1}\partial_t\sum_{j=1}^k\eta_{j, R}\tilde\phi_j + \sum_{j=1}^kpU^{p-1}_j\partial_t\left(\eta_{j, R}\tilde\phi_j\right)\text{ in }\Omega\times [t_0, +\infty),\\
\psi & = 0\quad \text{ on }\quad \partial\Omega\times [t_0, +\infty)
\end{aligned}
\end{cases}
\end{equation}
and
\begin{equation}\label{e:innerproblem}
\begin{aligned}
pU^{p-1}_j\partial_t\tilde\phi_j = \Delta\tilde{\phi}_j + pU^{p-1}_0\tilde\phi_j + pU^{p-1}_0\psi + S^{*,in}_{\mu, \xi, j}\quad \text{ in }B_{2R\mu_0}(\xi)\times [t_0,+\infty).
\end{aligned}
\end{equation}
Here
\begin{equation*}
V_{\mu, \xi} = \sum_{j=1}^kp\left(z^{p-1}-\left(\mu^{-\frac{n-2}{2}}_jU\left(\frac{x-\xi_j}{\mu_j}\right)\right)^{p-1}\right)\eta_{j, R} + p\left(1-\sum_{j=1}^k\eta_{j, R}\right)z^{p-1},
\end{equation*}
\begin{equation*}
U_j := \mu^{-\frac{n-2}{2}}_j U\left(\frac{x-\xi_j}{\mu_j}\right),
\end{equation*}
\begin{equation*}
\begin{aligned}
S^{*,in}_{\mu, \xi, j}(y, t) & = \mu_j^{-\frac{n+2}{2}}\Bigg\{\mu_{0j}^{-1}\dot{\lambda}_jpU(y)^{p-1}Z_{n+1}(y)-2\mu_{0j}^{-2}b_j\dot\mu_0\lambda_jpU(y)^{p-1}Z_{n+1}(y)\\
&\quad - \mu_{0j}\mu_0^{n-4}pU(y)^{p-1}\sum_{i=1}^k\mathcal M_{ij}\lambda_i+\mu_j^{-2}pU(y)^{p-1}\dot{\xi}_j\cdot \nabla U(y)\\
&\quad + \mu_jpU(y)^{p-1}\Big[-\mu_j^{n-2}\nabla H(q_j, q_j) + \sum_{i\neq j}\mu_j^{\frac{n-2}{2}}\mu_i^{\frac{n-2}{2}}\nabla G(q_j, q_i)\Big]\cdot y\Bigg\}\\
&\quad + \mu_j^{-\frac{n+2}{2}}\lambda_jb_j\Bigg[b_j^{-2}\mu_0^{-2}\dot{\mu}_0pU(y)^{p-1}Z_{n+1}(y)\\
&\quad + pU(y)^{p-1}\mu_0^{n-3}\bigg(-b_j^{n-4}H(q_j, q_j) + \sum_{i\neq j}b_j^{\frac{n-6}{2}}b_i^{\frac{n-2}{2}}G(q_j, q_i)\bigg)\Bigg]
\end{aligned}
\end{equation*}
and
\begin{equation*}
S^{*,out}_{\mu, \xi} = \left(S[z] - \sum_{j=1}^kS^{*,in}_{\mu, \xi, j}\right) + \sum_{j=1}^k(1-\eta_{j, R})S^{*,in}_{\mu, \xi, j}.
\end{equation*}
Under the self-similar coordinates, equation (\ref{e:innerproblem}) can be rewritten as
\begin{equation}\label{e:innerproblemselfsimilar}
\begin{aligned}
pU^{p-1}\partial_t\phi_j & = \Delta\phi + pU^{p-1}\phi_j + B^1[\phi_j] + B^2[\phi_j] + B^3[\phi_j] \\
&\quad + p\mu_{0j}^{\frac{n-2}{2}}\frac{\mu_{0j}^2}{\mu^2_j}U^{p-1}\left(\frac{\mu_{0j}}{\mu_j}y\right)\psi(\xi_j + \mu_{0j} y, t) + \mu_{0j}^{\frac{n+2}{2}}S^{*,in}_{\mu, \xi, j}(\xi_j + \mu_{0j} y, t)\\
&\quad \text{ in }\quad B_{2R}(0)\times [t_0,+\infty).
\end{aligned}
\end{equation}
Here
\begin{equation*}
B^1[\phi_j] = pU^{p-1}\partial_t\phi_j - p\frac{\mu_{0j}^2}{\mu^2_j}U^{p-1}\left(\frac{\mu_{0j}}{\mu_j}y\right)\partial_t\phi_j,
\end{equation*}
\begin{equation*}
B^2[\phi_j] = \mu_{0j}\dot\mu_{0j}\left(\frac{n-2}{2}\phi_j + y\cdot\nabla_y\phi_j\right) + \mu_{0j}\nabla\phi_j\cdot \dot\xi_j,
\end{equation*}
\begin{equation*}
B^3[\phi_j] = p\left[U^{p-1}\left(\frac{\mu_{0j}}{\mu_j}y\right) - U^{p-1}(y)\right]\phi_j + p\left[\frac{\mu_{0j}^2}{\mu_j^2}-1\right]U^{p-1}\left(\frac{\mu_{0j}}{\mu_j}y\right)\phi_j.
\end{equation*}
(\ref{e:outerproblem}) is the so-called outer problem, (\ref{e:innerproblemselfsimilar}) or (\ref{e:innerproblem}) is the inner problem. In Section 3, we solve the outer problem (\ref{e:outerproblem}) as a function of $\lambda$, $\xi$ and $\phi$. In Section 4, we solve the inner problem (\ref{e:innerproblem}) based on a linear theory and suitably choose of the parameter functions $\lambda$, $\xi$.
\section{The outer problem (\ref{e:outerproblem})}
\subsection{Linear theory for (\ref{e:outerproblem})}
In this subsection, we consider the linear equation of the outer problem
\begin{equation}\label{e:outerproblemmodelonomega}
\begin{cases}
\begin{aligned}
&pz^{p-1}\psi_t = \Delta\psi + V_{\mu, \xi}\psi + f(x, t) \text{ in } \Omega\times [t_0, +\infty),\\
&\psi(x, t) = 0 \text{ on }\partial\Omega\times [t_0, +\infty),\\
&\psi(x, t_0) = h(x) \text{ on } \Omega,
\end{aligned}
\end{cases}
\end{equation}
First we consider the $H^2$-estimate of (\ref{e:outerproblemmodelonomega}). We have
\begin{lemma}\label{lemma3.1000}
Suppose $\|g\|_{L^2_{t_0}, \nu} < +\infty$ and $\|h\|_{L^2(\Omega)} < +\infty$, there exists a solution $\psi = \psi(x, t)$ of the following problem
\begin{equation}\label{e4.200000}
\left\{
\begin{aligned}
-pz^{p-1}\psi_t +\Delta\psi + V_{\mu, \xi}\psi + z^{p-1}g & = 0\text{ in }\Omega\times [t_0, +\infty),\\
\psi &= 0 \text{ on }\partial\Omega\times [t_0, +\infty),\\
\psi(\cdot, t_0) & = h(x) \text{ on }\Omega,
\end{aligned}
\right.
\end{equation}
furthermore, there exists a positive constant $C$ such that
\begin{equation}\label{e:apriori11100}
\|\psi\|_{H^2_{t_0}, \nu}\leq C\left(\|h\|_{L^2(\Omega)}+\|g\|_{L^2_{t_0}, \nu}\right)
\end{equation}
holds for $t_0$ sufficiently large and $\nu > 0$.
\end{lemma}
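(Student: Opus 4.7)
The plan is to prove the weighted a priori estimate \eqref{e:apriori11100} first and to infer existence from a Galerkin approximation that inherits the same bound uniformly. Overall this is the weighted energy method used for the outer linear theory in \cite{delPinoMussoJEMS} and \cite{Davila2019}, adapted to the degenerate time coefficient $pz^{p-1}$.

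For the energy step, I would test \eqref{e4.200000} against $\psi\,\omega_\nu(t)$, where $\omega_\nu$ is the positive time weight implicit in $\|\cdot\|_{L^2_{t_0,\nu}}$, integrate over $\Omega$, and integrate by parts using the Dirichlet condition, obtaining
\[
\tfrac{1}{2}\tfrac{d}{dt}\!\int_\Omega p z^{p-1}\omega_\nu \psi^2\,dx + \int_\Omega \omega_\nu|\nabla\psi|^2\,dx = \int_\Omega \omega_\nu V_{\mu,\xi}\psi^2\,dx + \int_\Omega \omega_\nu z^{p-1} g\psi\,dx + \mathcal R(t),
\]
where $\mathcal R(t)$ collects the derivatives of $\omega_\nu$ and of $p z^{p-1}$; the latter is proportional to $\dot\mu$ and $\dot\xi$, hence of order $\mu_0^{n-2}$ by \eqref{e2:40} and absorbable for $t_0$ large. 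Inside the inner balls $V_{\mu,\xi}=p(z^{p-1}-U_j^{p-1})$ is small because $z\approx U_j$ there, while in the genuinely outer region $V_{\mu,\xi}=p z^{p-1}$ is small because $z$ is of order $\mu_j^{(n-2)/2}$ away from the concentration points, so $\int V_{\mu,\xi}\psi^2$ is easily dominated by $\int|\nabla\psi|^2$ via Poincar\'e. A Gr\"onwall-type argument then yields the $L^\infty_t L^2_x$ and $L^2_t H^1_x$ parts of \eqref{e:apriori11100}, with the source side controlled by $\|h\|_{L^2(\Omega)}+\|g\|_{L^2_{t_0,\nu}}$ through Cauchy-Schwarz.

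To upgrade to $H^2$ I would view the equation pointwise in $t$ as an elliptic problem $\Delta\psi + V_{\mu,\xi}\psi = p z^{p-1}\psi_t - z^{p-1} g$ with zero Dirichlet data, and apply elliptic $L^2$ regularity on $\Omega$ to get $\|\psi(\cdot,t)\|_{H^2}\lesssim \|\psi_t(\cdot,t)\|_{L^2}+\|\psi(\cdot,t)\|_{L^2}+\|g(\cdot,t)\|_{L^2}$. The missing bound on $\|\psi_t\|_{L^2}$ is obtained by testing the equation against $\psi_t\,\omega_\nu$ and rerunning the energy computation, producing an identity with $\int p z^{p-1}\omega_\nu|\psi_t|^2$ and $\frac{d}{dt}\int \omega_\nu|\nabla\psi|^2$ on the left; integrating both estimates in time against $\omega_\nu$ then closes the full $H^2_{t_0,\nu}$ norm.

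The main difficulty I anticipate is bookkeeping the competing behavior of $p z^{p-1}$: it is large ($\sim\mu_j^{-2}$) near the concentration points but only of order $\mu_j^{n-2}$ in the outer region, so the time-dissipation provided by the degenerate parabolic operator is highly anisotropic. The weight $\omega_\nu$ must be calibrated so that $p z^{p-1}\omega_\nu$ is simultaneously coercive in both regimes, and the perturbative terms in $\mathcal R(t)$, together with the cross terms produced by the cutoff $\eta_{j,R}$ with $R=t_0^\varepsilon$ that leak the inner and outer regions into one another, must be absorbed; these absorption estimates are precisely what forces the hypothesis "$t_0$ sufficiently large" in the statement. Once they are in hand, passing $T\to\infty$ in a Galerkin approximation on $[t_0,T]$ and extracting a weak limit is routine.
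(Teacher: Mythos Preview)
Your approach is essentially the paper's: test against $\psi$ and then against $\psi_t$ in the weighted measure $z^{p-1}dx$, absorb $V_{\mu,\xi}$ by its smallness, and close in the weighted $H^2$ norm. Three implementation differences are worth flagging.

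First, the norm $\|\cdot\|_{H^2_{t_0},\nu}$ is a \emph{supremum over unit time slabs} $\Lambda_\tau=\Omega\times[\tau,\tau+1]$ weighted by $\mu_0(\tau)^{-\nu}$, not an $L^2$-in-time norm against a global weight $\omega_\nu(t)$. The paper therefore carries no multiplicative time weight through the identity; it localizes instead with $\eta(t)=t-\tau$ on each slab, which yields the slab norms directly. Your $\omega_\nu$ variant would reach the same place after a translation between norms, but the slab formulation is what the definitions actually ask for.

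Second --- and this is the one substantive step you gloss over --- the remainder $\mathcal R$ contains $\int (z^{p-1})_t\,\psi^2$, and absorbing it into $\int z^{p-1}\psi^2$ requires $|z_t/z|\lesssim \mu_0^{n-2}$ \emph{uniformly on $\Omega$}. Near the concentration points this is immediate from $\dot\mu/\mu$, but near $\partial\Omega$ both $z$ and $z_t$ vanish and the ratio is not controlled by inspection. The paper devotes a paragraph to this: it is an Aronson--B\'enilan type bound, proved by showing that $c\,t^{-1}\tilde z-\partial_t\tilde z$ is superharmonic in the outer region with the correct boundary signs, so the maximum principle delivers $|z_t/z|\le c/t$ there. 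Do not treat this as automatic.

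Third, for existence the paper does not use Galerkin. Because the coefficient $pz^{p-1}$ vanishes on $\partial\Omega$, it solves the \emph{non-degenerate} problem on the truncated domain $\{x\in\Omega:\mathrm{dist}(x,\partial\Omega)>1/R\}\times[t_0,s]$, obtains the a priori bound uniformly in $R$ and $s$, and passes $R\to\infty$ then $s\to\infty$ via Arzel\`a--Ascoli. Galerkin could likely be made to work, but the domain truncation sidesteps the boundary degeneracy more transparently.
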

\noindent {\bf Notations}: For $\Lambda_\tau := \Omega\times [\tau, \tau+1]$ and $\nu > 0$, we define
$$
\|\psi(\cdot, \tau)\|_{L^2} = \left(\int_{\Omega}|\psi(\cdot, \tau)|^2z^{p-1}dx\right)^{\frac{1}{2}},
$$

$$
\|\psi\|_{L^2(\Lambda_\tau)} = \left(\int\int_{\Lambda_\tau}|\psi|^2z^{p-1}dxdt\right)^{\frac{1}{2}},
$$

$$
\|\psi\|_{H^1(\Lambda_\tau)} = \|\psi\|_{L^2(\Lambda_\tau)} + \|z^{-\frac{p-1}{2}}\nabla\psi\|_{L^2(\Lambda_\tau)},
$$

$$
\|\psi\|_{H^2(\Lambda_\tau)} = \|\psi_t\|_{L^2(\Lambda_\tau)} + \|z^{-\frac{p-1}{2}}\Delta\psi\|_{L^2(\Lambda_\tau)} + \|\psi\|_{H^1(\Lambda_\tau)},
$$

$$
\|\psi\|_{L^2_{t_0}, \nu} = \sup_{\tau > t_0}\mu_0^{-\nu}\|\psi\|_{L^2(\Lambda_\tau)},
$$

$$
\|\psi\|_{H^1_{t_0}, \nu} = \sup_{\tau > t_0}\mu_0^{-\nu}\|\psi\|_{H^1(\Lambda_\tau)},
$$
$$
\|\psi\|_{H^2_{t_0}, \nu} = \sup_{\tau > t_0}\mu_0^{-\nu}\|\psi\|_{H^2(\Lambda_\tau)}.
$$
For $s > t_0$, we also define
$$
\|\psi\|_{L^2_{t_0, s}, \nu} = \sup_{t_0 < \tau < s}\mu_0^{-\nu}\|\psi\|_{L^2(\Lambda_\tau)},
$$

$$
\|\psi\|_{H^1_{t_0, s}, \nu} = \sup_{t_0 < \tau < s}\mu_0^{-\nu}\|\psi\|_{H^1(\Lambda_\tau)},
$$

$$
\|\psi\|_{H^2_{t_0, s}, \nu} = \sup_{t_0 < \tau < s}\mu_0^{-\nu}\|\psi\|_{H^2(\Lambda_\tau)},
$$
\begin{proof}
First, we consider the following problem
\begin{equation}\label{e4.200}
\left\{
\begin{aligned}
-pz^{p-1}\psi_t +\Delta\psi + V_{\mu, \xi}\psi + z^{p-1}g & = 0\text{ in }\Omega\times [t_0, s),\\
\psi(\cdot, t_0) & = h(x)\text{ in }\Omega,\\
\psi &= 0 \text{ on }\partial\Omega\times [t_0, s).
\end{aligned}
\right.
\end{equation}
Multiply (\ref{e4.200}) with $\psi$ and take integration over $\Omega$, we have
\begin{equation*}
\begin{aligned}
&\frac{p}{2}\frac{d}{dt}\int_{\Omega}\psi^2z^{p-1}dx = \int_{\Omega}\left(\Delta\psi \psi + V_{\mu, \xi}\psi^2 + \frac{p(p-1)}{2}\frac{z_t}{z}\psi^2z^{p-1}+g\psi z^{p-1}\right)dx.
\end{aligned}
\end{equation*}
Integrate by parts (since we have assumed that the boundary condition is zero) and use the Cauchy-Schwarz inequality, there holds
\begin{equation*}
\begin{aligned}
&\frac{p}{2}\frac{d}{dt}\int_{\Omega}\psi^2z^{p-1}dx + \int_{\Omega}\left|\nabla\psi\right|^2 \leq  \int_{\Omega}g^2z^{p-1}dx + \int_{\Omega}\psi^2z^{p-1}dx + \mu_0^{n-2}(t)\int_{\Omega}\psi^2z^{p-1}dx.
\end{aligned}
\end{equation*}

In the above inequality, we have used the fact that $\left|\frac{z_t}{z}\right| \lesssim \mu^{n-2}_0(t)$. Indeed, this is an Aronson-B\'{e}nilan type inequality in the setting of fast diffusion equation (see, for example, \cite{DaskalopoulosandKenig}). Observe that in the domain $\Omega\setminus B_{\varepsilon}(\xi)$ away from the blow-up point (for simplicity, we assume $k=1$ and denote $\mu_j$ as $\mu$, denote $\xi_j$ as $\xi$), $ z= \tilde{z}$ and $c\frac{1}{t}\Delta \tilde z - \partial_t\Delta \tilde z = -c\frac{\mu^{-\frac{n+2}{2}}}{t}U^{\frac{n+2}{n-2}}\left(y\right) + \mu^{-\frac{n+2}{2}}\left(-\frac{n+2}{2}\frac{\dot\mu}{\mu}\right)U^{\frac{n+2}{n-2}}\left(y\right)+ \mu^{-\frac{n+2}{2}}\frac{n+2}{n-2}U^{\frac{4}{n-2}}\left(y\right)\nabla U(y)\cdot y \left(-\frac{\dot\mu}{\mu}\right)+\mu^{-\frac{n+2}{2}}\frac{n+2}{n-2}U^{\frac{4}{n-2}}\left(y\right)\nabla U(y)\cdot \left(-\frac{\dot\xi}{\mu}\right)$.
Now if we choose the constant $c > 0$ such that $-\frac{c}{t} -\frac{n+2}{2}\frac{\dot\mu}{\mu} \approx -\frac{c}{t} +\frac{n+2}{2}\mu^{n-2}_0(t) = -\frac{c}{t} +\frac{n+2}{2}\frac{1}{t} =0$, we obtain that $c\frac{1}{t}\Delta \tilde z - \partial_t\Delta \tilde z < \mu^{-\frac{n+2}{2}}\frac{n+2}{n-2}U^{\frac{4}{n-2}}\left(y\right)\nabla U(y)\cdot y \left(-\frac{\dot\mu}{\mu}\right)+\mu^{-\frac{n+2}{2}}\frac{n+2}{n-2}U^{\frac{4}{n-2}}\left(y\right)\nabla U(y)\cdot \left(-\frac{\dot\xi}{\mu}\right) < 0$. That is to say we have $\Delta \left(c\frac{1}{t}\tilde z - \partial_t\tilde z\right) < 0$ on $\Omega\setminus B_{\epsilon}(\xi)$, moreover, there hold $\frac{1}{t}\tilde z - \partial_t\tilde z = 0$ on $\partial\Omega$ as well as the estimate $c\frac{1}{t}\tilde z - \partial_t\tilde z = c\frac{\mu^{-\frac{n-2}{2}}}{t}U\left(y\right)-\mu^{-\frac{n-2}{2}}\left(-\frac{n-2}{2}\frac{\dot\mu}{\mu}\right)U\left(y\right) - \mu^{-\frac{n-2}{2}}\nabla U\left(y\right)\cdot y\left(-\frac{\dot\mu}{\mu}\right)-\mu^{-\frac{n-2}{2}}\nabla U\left(y\right)\cdot \left(-\frac{\dot\xi}{\mu}\right) - c\frac{\mu^{-\frac{n-2}{2}}}{t}\mu^{n-2}H_\mu(x, q)+\mu^{-\frac{n-2}{2}}\left(-\frac{n-2}{2}\frac{\dot\mu}{\mu}\right)\mu^{n-2}H_\mu(x, q)+ \mu^{-\frac{n-2}{2}}\mu^{n-2}\left((n-2)\frac{\dot\mu}{\mu}\right)H_\mu(x, q)  > 0$ on $\partial B_{\epsilon}(\xi)$ when $t_0$ is large enough. From this we see that $\frac{c}{t}\tilde z - \partial_t\tilde z > 0$ in $\Omega\setminus B_{\epsilon}(\xi)$, which implies $\frac{\partial_t z}{z}\leq \frac{c}{t}$ in $\Omega\setminus B_{\epsilon}(\xi)$. Similarly, $\frac{\partial_t z}{z}\geq \frac{c'}{t}$ for some $c' < 0$ and hence $\left|\frac{\partial_t z}{z}\right|\leq \frac{c''}{t}$ in $\Omega\setminus B_{\epsilon}(\xi)$ for some positive number $c'' > 0$. In the domain $B_{\epsilon}(\xi)$, the estimate $|\frac{\partial_t z}{z}|\leq c''\mu_0^{n-2}(t)$ is obvious since the main term of $\tilde z$ is $\mu^{-\frac{n-2}{2}}U\left(y\right)$.

For $\tau\in [t_0, s-1]$, we set $\eta(t) = t-\tau$, then
\begin{equation*}
\begin{aligned}
\frac{d}{dt}\left(\eta(t)\int_{\Omega}\psi^2z^{p-1}dx\right) + \eta(t)\int_{\Omega}\left|\nabla\psi\right|^2 \leq  \int_{\Omega}(\psi^2+g^2)z^{p-1}dx + \mu_0^{n-2}(t)\int_{\Omega}\psi^2z^{p-1}dx
\end{aligned}
\end{equation*}
holds for any $t\in [\tau, \tau+1]$.
Integrate this inequality on $[\tau, \tau + 1]$, we obtain
\begin{equation*}\label{e3.14}
\begin{aligned}
&\int_{\Omega}\psi^2(\cdot, \tau+1)z^{p-1}dx + \int_{\Lambda_\tau}\eta(t)\left|\nabla\psi\right|^2dx \leq  \|\psi\|_{L^2(\Lambda_\tau)}^2 + \|g\|_{L^2(\Lambda_\tau)}^2 + \frac{1}{\tau}\|\psi\|_{L^2(\Lambda_\tau)}^2.
\end{aligned}
\end{equation*}

Multiply (\ref{e4.200}) with $\psi_t$ and take integration over $\Omega$, we have
\begin{equation*}
\begin{aligned}
\int_{\Omega}\psi^2_tz^{p-1}dx + \frac{d}{dt}\int_{\Omega}&\left(\left|\nabla\psi\right|^2 - p\psi^2z^{p-1}\right)dx \\
&\leq  \int_{\Omega}(\psi^2+g^2)z^{p-1}dx + \mu_0^{n-2}(t)\int_{\Omega}\psi^2z^{p-1}dx
\end{aligned}
\end{equation*}
and
\begin{equation*}\label{e3.15}
\begin{aligned}
\int_{\Omega}&\eta(t)\psi^2_tz^{p-1}dx + \int_{\Omega}\left(\left|\nabla\psi\right|^2 - p\psi^2z^{p-1}\right)(\cdot, \tau+1)dx\\
&\quad \leq  \|\psi\|_{L^2(\Lambda_\tau)}^2 + \|g\|_{L^2(\Lambda_\tau)}^2 +\mu_0^{n-2}(t)\|\psi\|_{L^2(\Lambda_\tau)}^2.
\end{aligned}
\end{equation*}
Therefore, we have
\begin{equation*}
\|\psi\|_{L^2_{t_0, s}, \nu}\leq  \|\psi\|_{L^2_{t_0, s}, \nu} + \|g\|_{L^2_{t_0, s}, \nu},
\end{equation*}
\begin{equation*}
\|\psi_t\|_{L^2_{t_0, s}, \nu}\leq  \|\psi\|_{L^2_{t_0, s}, \nu} + \|g\|_{L^2_{t_0, s}, \nu}
\end{equation*}
and
\begin{equation*}
\|z^{-(p-1)}\Delta\psi \|_{L^2_{t_0, s}, \nu}\leq  \|\psi\|_{L^2_{t_0, s}}^\nu + \|g\|_{L^2_{t_0, s}, \nu}.
\end{equation*}
The above estimates implies that $\|\psi\|_{H^2_{t_0, s}, \nu}\leq C(\|\psi\|_{L^2_{t_0, s}, \nu} + \|g\|_{L^2_{t_0, s}, \nu})$. Since $\int_{\Omega}V_{\mu, \xi}\psi^2dx\leq o\left(\frac{1}{R}\right)\int_{\Omega}\psi^2z^{p-1}dx$, then standard parabolic estimate shows that
$$
\|\psi\|_{L^2_{t_0, s}, \nu}\leq C \left(\|g\|_{L^2_{t_0, s}, \nu}+\|h\|_{L^2_{t_0, s}, \nu}\right).
$$
Thus we have
$$
\|\psi\|_{H^2_{t_0, s}, \nu}\leq C \left(\|g\|_{L^2_{t_0, s}, \nu}+\|h\|_{L^2_{t_0, s}, \nu}\right).
$$

Second, we consider the solution $\psi^{R, s}(x, t)$ of the following problem
\begin{equation}\label{e4.2005000}
\left\{
\begin{aligned}
pz^{p-1}\psi_t & = \Delta\psi + V_{\mu, \xi}\psi + z^{p-1}g\text{ in }Q_{R,s},\\
\psi(\cdot, t_0) & = h(x)\text{ in }\Omega_{\frac{1}{R}},\\
\psi(x, t) &= 0 \text{ on }\partial\Omega_{\frac{1}{R}}\times [t_0, s).
\end{aligned}
\right.
\end{equation}
where $Q_{R,s} = \Omega_{\frac{1}{R}}\times [t_0, s]$ and $\Omega_{\frac{1}{R}}:= \{x\in \Omega\,|\,dist(x, \partial\Omega) < \frac{1}{R}\}$, $dist(x, \partial\Omega)$ means the distance of $x$ to the boundary $\partial\Omega$ of $\Omega$. Problem (\ref{e4.2005000}) is a non-degenerate parabolic one, from standard parabolic theory, there exists a unique solution of (\ref{e4.2005000}). Then by the same arguments as above, we have
$$
\|\psi^{R, s}\|_{H^2_{t_0, s}, \nu}\leq C_0 \left(\|g\|_{L^2_{t_0, s}, \nu}+\|h\|_{L^2_{t_0, s}, \nu}\right).
$$
Here $C_0$ is independent of $R$ and $s$. Let $R_j\to +\infty$ and set $\Lambda_{\tau_0, s} = \Omega\times [t_0, s]$, then $\psi^{R_j, s}$ converges in $C^\infty(\Lambda_{\tau_0, s})$ to a smooth solution $\psi^s$ on $\Lambda_{\tau_0, s}$.

Finally, we take a sequence $s_j\to +\infty$, for each $s_j$, there exists solution $\psi^{s_j}$ satisfying the a priori estimates (\ref{e:apriori11100}) independent of $s_j$. For every compact subset $K\subset \Omega\times (t_0, +\infty)$, standard parabolic theory can be applied to get higher order derivative estimates for $\psi^s$, then, by the Arzela-Ascoli theorem, $\psi^{s_j}$ converges to a smooth solution $\psi$ of (\ref{e4.200000}) defined on $\Omega\times (t_0, +\infty)$. By taking limits, we know that estimate (\ref{e:apriori11100}) also hold, which completes the proof.
\end{proof}

In the region $\cup_{j=1}^k B_{2\mu_j R}(\xi_j)$, we consider the following model problem of (\ref{e:outerproblem}),
\begin{equation}\label{e:outerproblemmodel}
\begin{cases}
\begin{aligned}
&pz^{p-1}\psi_t = \Delta\psi + V_{\mu, \xi}\psi + f_j(x, t) \text{ in } B_{2\mu_j R}(\xi_j)\times [t_0, +\infty),\\
&\psi(\cdot, t_0) = h_j(x) \text{ on } B_{2\mu_j R}(\xi_j),
\end{aligned}
\end{cases}
\end{equation}
$j = 1,\cdots, k$. For $\alpha, \beta>0$, we assume $f_j(x, t)$ satisfies
\begin{equation}\label{assumptiononf}
|f_j(x, t)|\leq M\frac{\mu_0^{-2}\mu_0^{\beta}}{1+|y|^{2+\alpha}}
\end{equation}
and denote by $\|f_j\|_{\ast, \beta, 2+\alpha}$ the least $M$ such that (\ref{assumptiononf}) holds. It is convenient to lift (\ref{e:outerproblemmodel}) onto the standard sphere $\mathbb{S}^n$. Let us recall some facts about the conformal Laplacian on $\mathbb{S}^n$ first.

\noindent{\bf Conformal Laplacian on $\mathbb{S}^n$}. Let $\pi:\mathbb{R}^n\to \mathbb{S}^n$ be the stereographic projection given by
\begin{equation*}
\pi(y_1,\cdots, y_n) = \left(\frac{2y}{1+|y|^2}, \frac{|y|^2-1}{|y|^2+1}\right).
\end{equation*}
For a function $\phi:\mathbb{R}^n\to \mathbb{R}$, we define the lifted function $\tilde\phi$ of $\phi$ on $\mathbb{S}^n$ by the relation
\begin{equation}\label{e:lifting}
\phi(y) = \tilde\phi(\pi(y))\left(\frac{2}{1+|y|^2}\right)^{\frac{n-2}{2}},\quad y\in \mathbb{R}^n.
\end{equation}
The conformal Laplacian on $\mathbb{S}^n$ can be defined as
$$
P = \Delta_{\mathbb{S}^n} - \frac{1}{4}n(n-2),
$$
here $\Delta_{\mathbb{S}^n}$ is the Laplace-Beltrami operator on $\mathbb{S}^n$. Then the following well known property holds,
$$
\left(\frac{2}{1+|y|^2}\right)^\frac{n+2}{2} P(\tilde\phi)\circ \pi = \Delta_{\mathbb{R}^n}\phi
$$
for $\phi$ and $\tilde\phi$ satisfying the relation (\ref{e:lifting}).
Using idea of \cite{delPinoMussoJEMS}, we have the following result.
\begin{lemma}\label{lemma:lineartheoryforouterproblem}
Suppose $\|f_j\|_{*,\beta,2+\alpha}<+\infty$ for some $\alpha > 0$ and $\beta > 0$. Then there exists a solution $\psi = \psi[f_j, h_j]$ of (\ref{e:outerproblemmodel}) satisfies the following estimates
\begin{equation*}\label{e:outerestimate111}
\begin{aligned}
|\psi(x, t)|&\lesssim \|f_j\|_{*,\beta,2+\alpha}\sum_{j=1}^k\frac{\mu_0^{\beta}(t)}{1+|y_j|^{\alpha}}\\
&\quad +\sum_{j=1}^ke^{-\delta(t-t_0)}\|h_j(x)\|_{L^\infty(B_{\mu_j R}(\xi_j))},
\end{aligned}
\end{equation*}
\begin{equation*}\label{e:outerestimate11111}
\begin{aligned}
|\partial_t\psi(x, t)|&\lesssim \|f_j\|_{*,\beta,2+\alpha}\sum_{j=1}^k\frac{\mu_0^{\beta}(t)}{1+|y_j|^{\alpha-2}}
\end{aligned}
\end{equation*}
and
\begin{equation*}\label{e:outerestimate1111111}
\begin{aligned}
|\nabla\psi(x, t)|&\lesssim \|f_j\|_{*,\beta,2+\alpha}\sum_{j=1}^k\frac{\mu_0^{-1+\beta}(t)}{1+|y_j|^{\alpha-1}},
\end{aligned}
\end{equation*}
here $y_j : = \frac{x-\xi_j}{\mu_j}$.
\end{lemma}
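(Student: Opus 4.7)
The plan is to exploit the stereographic lifting of problem (\ref{e:outerproblemmodel}) to the sphere, as set up in the paragraphs immediately preceding the statement. In the rescaled variable $y_j=(x-\xi_j)/\mu_j$ one has $z^{p-1}\approx\mu_j^{-2}U^{p-1}(y_j)$ and $U^{p-1}(y)=c_n\bigl(2/(1+|y|^2)\bigr)^2$; combined with the conformal identity $\Delta_y\phi=\bigl(2/(1+|y|^2)\bigr)^{(n+2)/2}(P\tilde\phi)\circ\pi$, the degenerate weight multiplying $\psi_t$ cancels exactly the Jacobian of the lift. Writing $\psi(x,t)=\tilde\psi(\pi(y_j),t)\bigl(2/(1+|y_j|^2)\bigr)^{(n-2)/2}$ therefore turns (\ref{e:outerproblemmodel}) into a uniformly parabolic equation $\tilde\psi_t=\mathcal L_t\tilde\psi+\tilde f_j$ on a spherical cap $\Sigma\subset\mathbb{S}^n$ whose coefficients are uniformly bounded in $t\ge t_0$; the corrections coming from the moving frame $(\dot\xi_j,\dot\mu_j)$ and from $V_{\mu,\xi}$ yield drifts of size $O(1/t)+O(R^{-\sigma})$ that are absorbable for $t_0$ and $R$ large.

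The core step — the weighted $L^\infty$ bound — I would obtain by a barrier in the original coordinates. I would test the candidate supersolution
$$
\bar\psi(x,t)=M\,\|f_j\|_{*,\beta,2+\alpha}\,\frac{\mu_0(t)^\beta}{1+|y_j|^\alpha}+M\,e^{-\delta(t-t_0)}\|h_j\|_{L^\infty}
$$
and verify $pz^{p-1}\partial_t\bar\psi-\Delta\bar\psi-V_{\mu,\xi}\bar\psi\ge|f_j|$ on $B_{2\mu_jR}(\xi_j)\times[t_0,\infty)$ for $M$ and $t_0$ large. The elliptic piece $-\Delta_x(1+|y_j|)^{-\alpha}$ is of order $\mu_j^{-2}(1+|y_j|)^{-\alpha-2}$, matching the assumed size of $|f_j|$ once weighted by $\mu_0^\beta$; the parabolic piece $pz^{p-1}\partial_t\bar\psi\sim\mu_j^{-2}U^{p-1}\mu_0^{\beta-1}\dot\mu_0$ is of lower order because $\dot\mu_0=O(\mu_0^{n-1})$; and $V_{\mu,\xi}\bar\psi$ is $o(R^{-\sigma})\bar\psi$. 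Parabolic comparison applied to the non-degenerate lifted problem then delivers the pointwise bound on $\psi$, with the exponential-in-time term absorbing the contribution of the initial trace via the spectral gap of $\mathcal L_t$ on $\Sigma$ with the natural data at $|y_j|=2R$.

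Existence on the full interval $[t_0,\infty)$ is obtained by the exhaustion scheme already used in Lemma~\ref{lemma3.1000}: solve on $[t_0,s]$ with Dirichlet data on $\partial B_{2\mu_jR}(\xi_j)$, use the $s$-independent barrier control together with interior parabolic regularity for the lifted (non-degenerate) equation, and pass to $s\to\infty$ by Arzel\`a-Ascoli. The bounds on $\nabla\psi$ and $\partial_t\psi$ then follow from Schauder or parabolic $L^p$ estimates applied to $\tilde\psi$ on fixed patches $\{|y_j|\sim r\}\times[t,t+1]$, which on the sphere are regions of bounded geometry; these yield $\|\nabla_y\tilde\psi\|_{L^\infty}+\|\partial_t\tilde\psi\|_{L^\infty}\lesssim\|\tilde\psi\|_{L^\infty}+\|\tilde f_j\|_{L^\infty}$ on the patch, and rescaling back produces the factor $\mu_0^{-1}$ in the gradient bound and the additional $(1+|y_j|)^{-1}$ of decay in each derivative.

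The most delicate point I expect is verifying the barrier inequality \emph{uniformly} across the whole ball, in particular near the outer boundary $|y_j|\sim R$ where the weight $U^{p-1}(y_j)$ becomes small and the lower-order terms — the commutator of $\partial_t$ with the moving frame, the correction $V_{\mu,\xi}$, and the mismatch $z^{p-1}-\mu_j^{-2}U^{p-1}(y_j)$ — compete with the principal part. The combination of $R=t_0^\varepsilon\to\infty$ and the conformal lift is essential here: on the cap the would-be degeneracy at $|y_j|=\infty$ becomes a regular interior point, the comparison principle is uniform down to the boundary of $\Sigma$, and the remaining bookkeeping reduces to tracking the decaying factors $\mu_0$ and $R^{-1}$ against the fixed powers $\mu_0^\beta$ and $(1+|y_j|)^{-\alpha}$ in $\bar\psi$.
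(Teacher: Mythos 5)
Your proposal follows essentially the same route as the paper: lift (\ref{e:outerproblemmodel}) by stereographic projection to a uniformly parabolic problem on a spherical cap, obtain the pointwise decay by a comparison argument, and recover the derivative bounds by rescaled interior parabolic estimates. The paper constructs its barriers \emph{on the sphere}, splitting $\tilde\psi=\tilde\psi_1+\tilde\psi_2$ and using a bounded solution $\tilde v$ of $\Delta_{\mathbb{S}^n}\tilde v-\tfrac14 n(n-2)\tilde v+1=0$ for the initial-data piece and the Riesz potential $p(\tilde y)\sim(\pi-|\tilde y|)^{\alpha+2-n}$ of $q=(\pi-|\tilde y|)^{\alpha-n}$ for the forcing piece; your $\mu_0^\beta/(1+|y_j|^\alpha)$ term is, under the conformal correspondence, precisely this $p(\tilde y)$, so the two barrier constructions coincide there.

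One step in your plan would fail if taken literally: the spatial constant $Me^{-\delta(t-t_0)}\|h_j\|_{L^\infty}$ is not a supersolution of $pz^{p-1}\partial_t-\Delta-V_{\mu,\xi}$ in the $x$-variable. A constant is annihilated by $\Delta_x$, $\partial_t$ contributes $-\delta p z^{p-1}\bar\psi<0$, and in the transition annulus $V_{\mu,\xi}$ contains the term $p(1-\sum_j\eta_{j,R})z^{p-1}>0$, so the supersolution inequality has the wrong sign there. The coercivity that closes it is the mass term $-\tfrac14 n(n-2)$ of the conformal Laplacian, which only becomes visible after lifting; this is why the paper performs this verification on $\mathbb{S}^n$ with the bounded function $\tilde v$ (which pulls back to the \emph{decaying} profile $\sim(2/(1+|y|^2))^{(n-2)/2}$, not to a constant). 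You do correctly invoke ``the spectral gap of $\mathcal L_t$ on $\Sigma$'' as the mechanism, so the idea is already in your proposal; the fix is simply to carry out the barrier computation in the lifted variable rather than in $x$, as the paper does.
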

\begin{proof}
Now we lift (\ref{e:outerproblemmodel}) to the sphere, we get the following equation
\begin{equation}\label{e:outerproblemmodelsphere}
\begin{cases}
\begin{aligned}
&(1+a(\tilde y, t))\tilde{\psi}_t = \Delta_{\mathbb{S}^n}\tilde{\psi} - \frac{1}{4}n(n-2)\tilde{\psi} + \tilde V_{\mu, \xi}\tilde\psi + \tilde{f}(\tilde{y}, t)\text{ in } \tilde B_{2R}\times [t_0, +\infty),\\
&\psi(\cdot, t_0) = \tilde{h}(\tilde y)\text{ on }\tilde B_{2R}.
\end{aligned}
\end{cases}
\end{equation}
Here $\psi(y) = \tilde\psi(\tilde y)\left(\frac{2}{1+|y|^2}\right)^{\frac{n-2}{2}}$, $\tilde y = \pi(y)$, $y =\frac{x-\xi_j}{\mu_j}\in B_{2R}(0)$ and $\tilde B_{2R}: =  \pi(B_{2R}(0))$, the functions $\tilde f$, $\tilde g$ and $\tilde h$ are defined similarly, furthermore $\tilde V_{\mu, \xi}(\tilde y, t) = \mu_j^2(1+|y|^2)^2V_{\mu, \xi}(y, t)$, $|a(\tilde y, t)| < \epsilon$ for a small number $\epsilon>0$. Note that the function $\tilde{f}(\tilde{y}, t)$ satisfies the estimate
$$
|\tilde{f}(\tilde{y}, t)|\lesssim \|f\|_{\ast, \beta, 2+\alpha}\mu_0^{\beta}(\pi-|\tilde{y}|)^{\alpha-n}.
$$
Here $|\tilde y|$ means the geodesic distance of the point $\tilde y$ to the south pole in $\mathbb{S}^n$.
Let $\tilde \psi_1$ be the solution of the following equation
\begin{equation}\label{hh11}
\left\{
\begin{aligned}
(1+a(\tilde y, t))\partial_t\tilde\psi &= \Delta_{\mathbb{S}^n}\tilde{\psi} - \frac{1}{4}n(n-2)\tilde{\psi}   \text{ in } \tilde B_{2R} \times [t_0,\infty),\\
\tilde\psi(\cdot, t_0) &= \tilde h \text{ in }\tilde B_{2R}.
\end{aligned}
\right.
\end{equation}
Suppose $\tilde v(\tilde y)$ is the bounded solution of $\Delta_{\mathbb{S}^n}\tilde v - \frac{1}{4}n(n-2)\tilde v + 1 =0$ in $\tilde B_{2R}$ satisfying $\tilde v =1$ on $\partial\tilde B_{2R}$. Then $\tilde v\ge 1$ in $\tilde B_{2R}$ and the function
\begin{equation*}
\begin{aligned}
&\bar\psi (\tilde y,t) = e^{-\delta (t-t_0)}\|\tilde h\|_{L^\infty(\tilde B_{2R})}\tilde v(\tilde y)
\end{aligned}
\end{equation*}
is a super-solution of (\ref{hh11}). Hence $|\psi_1(\tilde y, t)|\leq \bar\psi$.

Now suppose $\tilde \psi_2(\tilde y, t)$ is the unique solution of (\ref{e:outerproblemmodelsphere}) with $\tilde h = 0$. Let $p(\tilde y)$ be the positive solution of the equation
\begin{equation*}
\Delta_{\mathbb{S}^n}p - \frac{1}{4}n(n-2) p + 4q = 0\text{ in }\mathbb{S}^n
\end{equation*}
with $q(\tilde y) = \frac{1}{(\pi-|\tilde y|)^{n-\alpha}}$. Then by Riesz kernel (see, for example, \cite{Davies1989}), we get $p(\tilde y)\sim \frac{1}{(\pi-|\tilde y|)^{n-\alpha-2}}$. For a fixed small $\delta > 0$, we have
\begin{equation*}
\Delta_{\mathbb{S}^n}p - \frac{1}{4}n(n-2) p + \delta(\pi-|\tilde y|)^{-2}p+2q \leq 0\text{ in }\mathbb{S}^n.
\end{equation*}
Observe that $|\tilde V_{\mu, \xi}|\leq \delta(\pi-|\tilde y|)^{-2}$, then it is easy to see that $\tilde{\psi}(\tilde y, t) = 2\mu_0^{\beta} p$ is a positive super-solution to
\begin{equation*}
(1+a(\tilde y, t))\partial_t\tilde{\psi} = \Delta_{\mathbb{S}^n}\tilde\psi - \frac{1}{4}n(n-2)\tilde\psi + \tilde V_{\mu, \xi}\tilde{\psi} + \mu_0^{\beta}q
\end{equation*}
for $t > t_0$ and $t_0$ is large enough. Therefore, one has
\begin{equation*}\label{e4:13}
|\tilde\psi_2(\tilde y, t)|\lesssim \mu_0^{\beta}\|f\|_{*, \beta, 2+\alpha}\frac{1}{(\pi-|\tilde y|)^{n-\alpha-2}}.
\end{equation*}
Hence $\tilde \psi = \tilde \psi_1 + \tilde \psi_2$ satisfies the estimate
\begin{equation*}\label{e4:4000}
\begin{aligned}
|\tilde \psi(\tilde{y}, t)|&\lesssim \|f\|_{*,\beta,2+\alpha}\mu_0^{\beta}(t)\frac{1}{(\pi-|\tilde y|)^{n-\alpha-2}}\\
&\quad + t^{-\gamma}\|\tau^\gamma \tilde g(\tilde{y}, \tau)\|_{L^\infty(\partial\tilde B_{2R}\times [t_0,\infty))} + e^{-\delta (t-t_0)}\|\tilde h\|_{L^\infty(\tilde B_{2R})}.
\end{aligned}
\end{equation*}

Finally, scaling arguments imply that
\begin{equation*}
\begin{aligned}
|\partial_t\tilde \psi(\tilde{y}, t)|&\lesssim \|f\|_{*,\beta,2+\alpha}\mu_0^{\beta}(t)\frac{1}{(\pi-|\tilde y|)^{n-\alpha}}
\end{aligned}
\end{equation*}
and
\begin{equation*}
\begin{aligned}
|\nabla\tilde \psi(\tilde{y}, t)|&\lesssim \|f\|_{*,\beta,2+\alpha}\mu_0^{\beta}(t)\frac{1}{(\pi-|\tilde y|)^{n-\alpha-1}}\text{ for }\tilde y\in \tilde B_{2R}.
\end{aligned}
\end{equation*}
Projected to $\mathbb{R}^n$, we obtain the desired estimates.
\end{proof}
Combine the above discussions, we have the following linear theory for the outer problem. Define the norm $\|\psi\|_{**, \beta, \alpha, \nu}$ of $\psi$ as the least positive number such that
\begin{equation*}
\begin{aligned}
(1+|y|)^{-1}\mu_0|\nabla\psi(x, t)|\chi_{\cup_{j=1}^kB_{2R\mu_j}(\xi_j)}& +(1+|y|)^{-2}|\partial_t\psi(x, t)|\chi_{\cup_{j=1}^kB_{2R\mu_j}(\xi_j)}\\
&\quad +|\psi(x, t)|\chi_{\cup_{j=1}^kB_{2R\mu_j}(\xi_j)}\lesssim M\sum_{j=1}^k\frac{\mu_0^{\beta}(t)}{1+|y_j|^{\alpha}}
\end{aligned}
\end{equation*}
and
\begin{equation*}
\begin{aligned}
\|\psi\|_{H^2_{t_0}, \nu} \lesssim M.
\end{aligned}
\end{equation*}
Also we define $\|f\|_{*, \beta, 2+\alpha, \nu} = \|f\chi_{\cup_{j=1}^kB_{2R\mu_j}(\xi_j)}\|_{*, \beta, 2+\alpha} + \|z^{1-p}f\|_{L^2_{t_0}, \nu}$.
From Lemma \ref{lemma3.1000} and Lemma \ref{lemma:lineartheoryforouterproblem}, we have the following result.
\begin{prop}\label{prop3000}
There exists a bounded linear operator which maps functions $f:\Omega\times (t_0,+\infty)\to\mathbb{R}$, $h:\Omega\to\mathbb{R}$ with $\|f\|_{*, \beta, 2+\alpha, \nu}<\infty$, $\|h\|_{L^2_{t_0}, \nu} < +\infty$ into a solution $\psi$ of(\ref{e:outerproblemmodelonomega}), furthermore, the following estimate holds
\begin{equation*}
\|\psi\|_{**, \beta, \alpha, \nu}\leq C\left(\|f\|_{*, \beta, 2+\alpha, \nu} + \|h\|_{L^2(\Omega)} + e^{-\delta(t-t_0)}\|h\chi_{\cup_{j=1}^kB_{2R\mu_j}(\xi_j)}\|_{L^\infty(\Omega)}\right)
\end{equation*}
for a small constant $\delta > 0$.
\end{prop}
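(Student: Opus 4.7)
The plan is to deduce Proposition \ref{prop3000} by linear superposition, combining the bubble-localised pointwise theory of Lemma \ref{lemma:lineartheoryforouterproblem} with the global $H^2$-weighted theory of Lemma \ref{lemma3.1000}. Since problem \eqref{e:outerproblemmodelonomega} is linear in $(\psi,f,h)$, the bounded operator will be assembled additively from an inner piece and an outer piece, each depending linearly on the data.

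First I would pick smooth cutoffs $\tilde\chi_j$ with $\tilde\chi_j\equiv 1$ on $B_{R\mu_j}(\xi_j)$ and $\tilde\chi_j\equiv 0$ outside $B_{2R\mu_j}(\xi_j)$, and set $\tilde\chi:=\sum_j\tilde\chi_j$. On each $B_{2R\mu_j}(\xi_j)$ I would apply Lemma \ref{lemma:lineartheoryforouterproblem} with forcing $f$ and initial datum $h$ restricted to the ball, producing $\psi_j$ that satisfies the three pointwise weighted estimates with constants controlled by $\|f\chi_{\cup B_{2R\mu_j}(\xi_j)}\|_{*,\beta,2+\alpha}$ and $\|h\chi_{\cup B_{2R\mu_j}(\xi_j)}\|_{L^\infty}$. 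I then define $\psi_{\mathrm{in}}:=\sum_{j=1}^k\tilde\chi_j\psi_j$, extended by zero to $\Omega$. Substituting $\psi_{\mathrm{in}}$ into the operator of \eqref{e:outerproblemmodelonomega} yields $\tilde\chi f$ plus the commutator error
$$\mathcal{E}_{\mathrm{cut}}=\sum_{j=1}^k\bigl(pz^{p-1}\psi_j\partial_t\tilde\chi_j - 2\nabla\tilde\chi_j\cdot\nabla\psi_j - \psi_j\Delta\tilde\chi_j\bigr),$$
supported in the annuli $\{R\mu_j\le|x-\xi_j|\le 2R\mu_j\}$. Using the pointwise decay of $\psi_j$ and the choice $R=t_0^\varepsilon$, I expect $\mathcal{E}_{\mathrm{cut}}$ to be controlled in the $\|z^{1-p}\cdot\|_{L^2_{t_0},\nu}$ norm by $C\|f\|_{*,\beta,2+\alpha,\nu}$. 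Letting $\psi_{\mathrm{out}}$ be the global solution produced by Lemma \ref{lemma3.1000} with forcing $(1-\tilde\chi)f-\mathcal{E}_{\mathrm{cut}}$ and initial datum $h-\psi_{\mathrm{in}}(\cdot,t_0)$, the sum $\psi:=\psi_{\mathrm{in}}+\psi_{\mathrm{out}}$ solves \eqref{e:outerproblemmodelonomega} and
$$\|\psi_{\mathrm{out}}\|_{H^2_{t_0},\nu}\le C\bigl(\|f\|_{*,\beta,2+\alpha,\nu}+\|h\|_{L^2(\Omega)}\bigr).$$
Linearity of every step in $(f,h)$ then delivers the advertised bounded linear operator.

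The main obstacle I anticipate is verifying the pointwise component of $\|\cdot\|_{**,\beta,\alpha,\nu}$ on the \emph{full} $\psi$, i.e.\ showing that $\psi_{\mathrm{out}}$ does not destroy the weighted pointwise decay of $\psi_{\mathrm{in}}$ inside each $B_{2R\mu_j}(\xi_j)$. Since the forcing of $\psi_{\mathrm{out}}$ inside these balls is supported either away from the bubble or in the annular region where $\nabla\tilde\chi_j$ lives, after lifting to $\mathbb{S}^n$ as in the proof of Lemma \ref{lemma:lineartheoryforouterproblem} and deploying a Riesz-kernel supersolution the $H^2$-weighted control should upgrade to the pointwise decay $\mu_0^\beta(1+|y_j|)^{-\alpha}$ dictated by the $**$-norm. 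The term $e^{-\delta(t-t_0)}\|h\chi_{\cup B_{2R\mu_j}(\xi_j)}\|_{L^\infty}$ appears as the propagation of the inner part of the initial datum through the local (non-degenerate) parabolic semigroup on each ball, exactly the barrier used in Lemma \ref{lemma:lineartheoryforouterproblem}. Finally, the gradient and time-derivative components of the $**$-norm follow from the pointwise bound on $\psi$ by standard parabolic rescaling on the self-similar coordinate $y_j=(x-\xi_j)/\mu_j$.
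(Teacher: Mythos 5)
The paper itself offers no written proof of Proposition \ref{prop3000}; it simply asserts that the result follows by combining Lemma \ref{lemma3.1000} (the global weighted $H^2$ theory) with Lemma \ref{lemma:lineartheoryforouterproblem} (the pointwise theory near the bubbles), so your cutoff superposition $\psi=\psi_{\mathrm{in}}+\psi_{\mathrm{out}}$ is a reasonable way to realize exactly that combination, and your bookkeeping of the commutator error $\mathcal{E}_{\mathrm{cut}}$ and of the modified initial datum is plausible (with $R=t_0^{\varepsilon}$ the annular terms are indeed small in the $\|z^{1-p}\cdot\|_{L^2_{t_0},\nu}$ norm). However, there is a genuine gap at precisely the step you flag and then pass over: the pointwise component of the $\|\cdot\|_{**,\beta,\alpha,\nu}$ norm for the correction $\psi_{\mathrm{out}}$ inside $\cup_{j}B_{2R\mu_j}(\xi_j)$. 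To run the lifted comparison argument of Lemma \ref{lemma:lineartheoryforouterproblem} for $\psi_{\mathrm{out}}$ on a ball $B_{2R\mu_j}(\xi_j)$ you need $L^\infty$ control of $\psi_{\mathrm{out}}$ on the lateral boundary $\partial B_{2R\mu_j}(\xi_j)\times(t_0,\infty)$ at the scale $\mu_0^{\beta}$, and this does not follow from $\|\psi_{\mathrm{out}}\|_{H^2_{t_0},\nu}\lesssim \|f\|_{*,\beta,2+\alpha,\nu}+\|h\|_{L^2(\Omega)}$: that is an energy bound weighted by $z^{p-1}$, which away from the concentration points is of size $\mu_0^{2}$, so it gives only weak integral control there, does not embed into $L^\infty$ in space for $n\geq 4$, and in any case does not carry the crucial smallness factor $\mu_0^{\beta}$ demanded by the $**$-norm. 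Some intermediate step (local uniform parabolic estimates of De Giorgi--Nash--Moser or rescaled Schauder type in the region where the equation is non-degenerate, converting the weighted $L^2$ control into a pointwise bound of the right size on the ball boundaries) is needed before the supersolution argument can be invoked, and your proposal does not supply it.

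A second, related point: Lemma \ref{lemma:lineartheoryforouterproblem} cannot be cited as a black box for this purpose, because the model problem \eqref{e:outerproblemmodel} carries no lateral boundary condition, and the boundary contribution (the term involving $\tilde g$ that appears in the estimate inside the proof of that lemma) has been dropped from the lemma's statement. Whichever way you organize the gluing --- your route of solving local problems first and correcting globally, or the alternative of taking the single solution of Lemma \ref{lemma3.1000} and upgrading it to pointwise decay inside the balls --- the boundary data of the relevant piece on $\partial B_{2R\mu_j}(\xi_j)$ must be estimated in $L^\infty$ with the factor $\mu_0^{\beta}$, and this transfer from the weighted energy norm to a pointwise bound is the substantive content of the proposition. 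As written, your proof asserts this transfer ("should upgrade") rather than proving it, so the argument is incomplete at its central step.
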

\subsection{Solving the outer problem (\ref{e:outerproblem})}
Given a function $h(t):(t_0, \infty)\to\mathbb{R}^k$ and $\delta > 0$, we define its weighted $L^\infty$ norm as follows
\begin{equation*}\label{e4:30}
\|h\|_\delta:=\|\mu_0(t)^{-\delta}h(t)\|_{L^\infty(t_0, \infty)}.
\end{equation*}
In the rest of this paper, we assume the parameter functions $\lambda$, $\xi$, $\dot{\lambda}$, $\dot{\xi}$ satisfy the following conditions,
\begin{equation}\label{e4:210}
\|\dot{\lambda}(t)\|_{n-1+\sigma} + \|\dot{\xi}(t)\|_{n-1+\sigma}\leq c,
\end{equation}
\begin{equation}\label{e4:220}
\|\lambda(t)\|_{1+\sigma} + \|\xi(t)-q\|_{1+\sigma}\leq c,
\end{equation}
for a positive constant $c$ which is independent of $t$, $t_0$ and $R$, $\sigma > 0$ is a small but fixed constant.
Also, for a fixed number $a\in (-n, -2)$, let us denote
\begin{equation*}\label{e4:23}
\|\phi\|_{n-2+\sigma,n+a}=\max_{j=1,\cdots, k}\|\phi_j\|_{n-2+\sigma,n+a},
\end{equation*}
where $\|\phi_j\|_{n-2+\sigma,n+a}$ is defined to be the least number $M > 0$ such that
\begin{equation}\label{e4:24}
(1+|y|)^{-2}|\partial_t \phi_j(y, t)| + (1+|y|)^{-1}|\nabla_y \phi_j(y, t)| + |\phi_j(y, t)|\leq M\frac{\mu_0^{n-2+\sigma}}{1+|y|^{n+a}}
\end{equation}
holds for $j= 1,\cdots,k$ and $|y|\leq 2R$. We assume that for $\phi = (\phi_1,\cdots, \phi_k)$, it holds that
\begin{equation}\label{e4:25}
\|\phi\|_{n-2+\sigma,n+a}\leq ct_0^{-\varepsilon}
\end{equation}
for some $\varepsilon > 0$ sufficiently small.

Note that the function $\psi$ is a solution to (\ref{e:outerproblem}) if $\psi$ is a fixed point of the operator
\begin{equation*}\label{e4:34}
\mathcal{A}(\psi):=T(f(\psi), \psi_0),
\end{equation*}
where
\begin{equation}\label{e4:35}
\begin{aligned}
f(\psi)& = \sum_{j=1}^k\left[2\nabla\eta_{j, R}\nabla_x\tilde\phi_j + \tilde\phi_j\left(\Delta_x-pU^{p-1}_j\partial_t\right)\eta_{j, R}\right]\\
&\quad + S^{*,out}_{\mu, \xi} + N[\tilde{\phi}] - \left(N[\tilde{\phi}]\right)_t -\left(pz^{p-1}\right)_t\tilde\phi\\
&\quad -pz^{p-1}\partial_t\sum_{j=1}^k\eta_{j, R}\tilde\phi_j + \sum_{j=1}^kpU^{p-1}_j\partial_t\left(\eta_{j, R}\tilde\phi_j\right).
\end{aligned}
\end{equation}
To apply the Contraction Mapping Theorem, we estimate the terms in (\ref{e4:35}) as follows:
\begin{itemize}
\item[(1)] Estimation of $S^{*,out}_{\mu, \xi}$:
\begin{equation}\label{e4:37}
\begin{aligned}
&|S^{*,out}_{\mu, \xi}(x, t)|\lesssim \mu_0^{2-\alpha-\sigma}(t_0)\sum_{j=1}^k\frac{\mu_j^{-2}\mu_0^{\frac{n-2}{2}+\sigma}}{1+|y_j|^{2+\alpha}}\\
&\text{ and }\|z^{1-p}S^{*, out}_{\mu, \xi}\|_{L^2_{t_0}, \nu} \lesssim t_0^{-\varepsilon}
\end{aligned}
\end{equation}
 with $\nu = \frac{n-2+\sigma}{2}$.
\item[(2)] Estimation of $\sum_{j=1}^k\left[2\nabla\eta_{j, R}\nabla_x\tilde\phi_j + \tilde\phi_j\left(\Delta_x-pz^{p-1}\partial_t\right)\eta_{j, R}\right]$:
\begin{equation}\label{e4:38}
\begin{aligned}
&\left|\sum_{j=1}^k\left[2\nabla\eta_{j, R}\nabla_x\tilde\phi_j + \tilde\phi_j\left(\Delta_x-pz^{p-1}\partial_t\right)\eta_{j, R}\right]\right|\lesssim \|\phi\|_{n-2+\sigma, n+a}\sum_{j=1}^k\frac{\mu_j^{-2}\mu_0^{\frac{n-2}{2}+\sigma}}{1+|y_j|^{2+\alpha}}\\
&\text{ and } \|z^{1-p}\sum_{j=1}^k\left[2\nabla\eta_{j, R}\nabla_x\tilde\phi_j + \tilde\phi_j\left(\Delta_x-pz^{p-1}\partial_t\right)\eta_{j, R}\right]\|_{L^2_{t_0}, \nu}\leq \|\phi\|_{n-2+\sigma, n+a} \end{aligned}
\end{equation}
 with $\nu = \frac{n-2+\sigma}{2}$.
\item[(3)] Estimation of $(1-\partial_t)N(\tilde{\phi})$:
\begin{equation}\label{e4:39}
\begin{aligned}
&\left|(1-\partial_t)N(\tilde{\phi})\right|\lesssim\\
&\left\{
\begin{aligned}
    t_0^{-\varepsilon}(\|\phi\|^2_{n-2+\sigma,n+a}+\|\psi\|^2_{**,\beta,\alpha})\sum_{j=1}^k\frac{\mu_j^{-2}\mu_0^{\frac{n-2}{2}+\sigma}(t)}{1+|y_j|^{2+\alpha}},  & \quad \text{when } 6 \geq n,\\
    t_0^{-\varepsilon}(\|\phi\|^p_{n-2+\sigma,n+a}+\|\psi\|^p_{**,\beta,\alpha})\sum_{j=1}^k\frac{\mu_j^{-2}\mu_0^{\frac{n-2}{2}+\sigma}(t)}{1+|y_j|^{2+\alpha}},       & \quad \text{when } 6 < n\\
  \end{aligned}
\right.\\
&\text{ and }\|z^{1-p}(1-\partial_t)N(\tilde{\phi})\|_{L^2_{t_0}, \nu}\leq c \|\psi\|_{**, \beta, \alpha, \nu}\text{ with }\nu = \frac{n-2+\sigma}{2}.
\end{aligned}
\end{equation}
\item[(4)] Estimation of $\left(pz^{p-1}\right)_t\tilde\phi$:
\begin{equation}\label{e4:3900}
\begin{aligned}
&\left|\left(pz^{p-1}\right)_t\tilde\phi\right|\lesssim \mu_0^{2-\alpha-\sigma}(t_0)\sum_{j=1}^k\frac{\mu_j^{-2}\mu_0^{\frac{n-2}{2}+\sigma}}{1+|y_j|^{2+\alpha}}\\
&\text{ and }\|z^{1-p}\left(pz^{p-1}\right)_t\tilde\phi\|_{L^2_{t_0}, \nu}\lesssim\|\phi\|_{n-2+\sigma, n+a}
\end{aligned}
\end{equation}
with $\nu = \frac{n-2+\sigma}{2}$.

\item[(5)] Estimation of $pz^{p-1}\partial_t\sum_{j=1}^k\eta_{j, R}\tilde\phi_j$:
\begin{equation}\label{e4:3900001}
\begin{aligned}
&\left|pz^{p-1}\partial_t\sum_{j=1}^k\eta_{j, R}\tilde\phi_j\right|\lesssim \|\phi\|_{n-2+\sigma, n+a}\sum_{j=1}^k\frac{\mu_j^{-2}\mu_0^{\frac{n-2}{2}+\sigma}}{1+|y_j|^{2+\alpha}}\\
&\text{ and }\|z^{1-p}\partial_t\sum_{j=1}^k\eta_{j, R}\tilde\phi_j\|_{L^2_{t_0}, \nu}\lesssim\|\phi\|_{n-2+\sigma, n+a}
\end{aligned}
\end{equation}
with $\nu = \frac{n-2+\sigma}{2}$.

\item[(6)] Estimation of $\sum_{j=1}^kpU^{p-1}_j\partial_t\left(\eta_{j, R}\tilde\phi_j\right)$:
\begin{equation}\label{e4:390000111}
\begin{aligned}
&\left|\sum_{j=1}^kpU^{p-1}_j\partial_t\left(\eta_{j, R}\tilde\phi_j\right)\right|\lesssim \|\phi\|_{n-2+\sigma, n+a}\sum_{j=1}^k\frac{\mu_j^{-2}\mu_0^{\frac{n-2}{2}+\sigma}}{1+|y_j|^{2+\alpha}}\\
&\text{ and }\|z^{1-p}\sum_{j=1}^kpU^{p-1}_j\partial_t\left(\eta_{j, R}\tilde\phi_j\right)\|_{L^2_{t_0}, \nu}\lesssim\|\phi\|_{n-2+\sigma, n+a}
\end{aligned}
\end{equation}
with $\nu = \frac{n-2+\sigma}{2}$.
\end{itemize}

{\it Proof of (\ref{e4:37})}. Recall that
\begin{equation*}\label{e4:41}
S^{*,out}_{\mu, \xi} = \left(S[z] - \sum_{j=1}^kS^{*,in}_{\mu, \xi, j}\right) + \sum_{j=1}^k(1-\eta_{j, R})S^{*,in}_{\mu, \xi, j}.
\end{equation*}
In the region $|x-q_j |>\delta$ with $\delta > 0$ small, $S^{*,out}_{\mu, \xi}$ can be estimated as follows
\begin{equation*}\label{e4:42}
\begin{aligned}
|S_{out}(x, t)|\lesssim \mu_0^{\frac{n-2}{2}}(\mu_0^{2}+\mu_0^{n})& \lesssim \mu_0^{2-\alpha-\sigma}(t_0)\sum_{j=1}^k\frac{\mu_j^{-2}\mu_0^{\frac{n-2}{2}+\sigma}}{1+|y_j|^{2+\alpha}}\\
&\lesssim t_0^{-\varepsilon}\sum_{j=1}^k\frac{\mu_j^{-2}\mu_0^{\frac{n-2}{2}+\sigma}}{1+|y_j|^{2+\alpha}}.
\end{aligned}
\end{equation*}
In the region $|x-q_j| \leq \delta$ with $\delta > 0$ small, we have
\begin{equation*}\label{e4:43}
\begin{aligned}
\left|S^{(2)}_{\mu,\xi}(x, t)\right|\lesssim \mu_0^{-\frac{n+2}{2}}\frac{\mu_0^{n}}{1+|y_j|^{2}}&\lesssim  \mu_0^{2-\alpha-\sigma}(t_0)\sum_{j=1}^k\frac{\mu_j^{-2}\mu_0^{\frac{n-2}{2}+\sigma}}{1+|y_j|^{2+\alpha}}\lesssim t_0^{-\varepsilon}\sum_{j=1}^k\frac{\mu_j^{-2}\mu_0^{\frac{n-2}{2}+\sigma}}{1+|y_j|^{2+\alpha}}.
\end{aligned}
\end{equation*}
Furthermore, in the region $|x-q_j| < \delta$,
\begin{equation*}\label{e4:44}
\begin{aligned}
\left|(1 - \eta_{j, R})S_{\mu, \xi, j}^{*, in}\right|&\lesssim t_0^{-\varepsilon}\sum_{j=1}^k\frac{\mu_j^{-2}\mu_0^{\frac{n-2}{2}+\sigma}}{1+|y_j|^{2+\alpha}}
\end{aligned}
\end{equation*}
since $(1-\eta_{j, R})\neq 0$ if $|x-\xi_j|>\mu_0R$. Therefore, we have $\|S^{*,out}_{\mu, \xi}\|_{*, \beta, 2+\alpha} < t_0^{-\varepsilon}$.
Similarly, we have
\begin{equation}\label{e:l2estimate}
\begin{aligned}
\int_{\Omega}\left|z^{1-p}S^{*,out}_{\mu, \xi}\right|^2z^{p-1}dx&\leq t_0^{-\varepsilon}\int_{\Omega}\left|\frac{\mu_0^{\frac{n-2}{2}+\sigma}}{1+|y|^{\frac{7}{2}-\sigma}}|y|^4\right|^2z^{p-1}dx\\
& \leq t_0^{-\varepsilon}\int_{\Omega/\mu_0}\left|\frac{\mu_0^{n-2+\sigma}}{1+|y|^{\frac{7}{2}-\sigma}}|y|^4\right|^2\frac{1}{1+|y|^4}dy\\
& \leq t_0^{-\varepsilon}\mu_0^{n-2+\sigma}\int_{\Omega/\mu_0}\frac{1}{1+|y|^{-2\sigma-1+n-2+\sigma}}\frac{1}{1+|y|^4}dy\\
& \leq t_0^{-\varepsilon}\mu_0^{n-2+\sigma}\int_{\mathbb{R}^n}\frac{1}{1+|y|^{-\sigma+n+1}}dy\\
& \leq  t_0^{-\varepsilon}\mu_0^{n-2+\sigma},
\end{aligned}
\end{equation}
thus $\|z^{1-p}S^{*, out}_{\mu, \xi}\|_{L^2_{t_0}, \nu} \leq t_0^{-\varepsilon}$ with $\nu = \frac{n-2+\sigma}{2}$.

{\it Proof of (\ref{e4:38})}.
For the term $\tilde{\phi}_j\big(\Delta -\partial_t\big)\eta_{j, R}$, we have
\begin{equation*}\label{e4:46}
\begin{aligned}
\left|\tilde{\phi}_j\big(\Delta -\partial_t\big)\eta_{j, R}\right|\lesssim & \frac{\left|\Delta\eta\left(|\frac{x-\xi_j}{R\mu_{0j}}|\right)\right|}{R^{2}\mu_{0j}^{2}}\mu_0^{-\frac{n-2}{2}}|\phi_j|\\
&+\left|\eta'\left(|\frac{x-\xi_j}{R\mu_{0j}}|\right)\left(\frac{|x-\xi_j|}{R\mu_0^2}\dot{\mu_0}+\frac{1}{R\mu_0}\dot{\xi}\right)\right|\mu_0^{-\frac{n-2}{2}}|\phi_j|.
\end{aligned}
\end{equation*}
Furthermore, there hold
\begin{equation*}\label{e4:47}
\begin{aligned}
\frac{\left|\Delta\left(|\frac{x-\xi_j}{R\mu_{0j}}|\right)\right|}{R^{2}\mu_{0j}^{2}}\mu_0^{-\frac{n-2}{2}}|\phi_j|& \lesssim  \frac{\left|\Delta\eta\left(|\frac{x-\xi_j}{R\mu_{0j}}|\right)\right|}{R^{2}\mu_{0j}^{2}}\frac{\mu_0^{\frac{n-2}{2}+\sigma}}{(1+|y_j|^{n+a})}\|\phi\|_{n-2+\sigma, n+a}\\
&\lesssim \|\phi\|_{n-2+\sigma, n+a}\sum_{j=1}^k\frac{\mu_j^{-2}\mu_0^{\frac{n-2}{2}+\sigma}(t)}{1+|y_j|^{2+\alpha}}
\end{aligned}
\end{equation*}
and
\begin{equation*}
\begin{aligned}
&\left|\eta'\left(|\frac{x-\xi_j}{R\mu_{0j}}|\right)\left(\frac{|x-\xi_j|\dot{\mu_0}+\mu_0\dot{\xi}}{R\mu_0^2}\right)\right|\mu_0^{-\frac{n-2}{2}}|\phi_j|\\
&\quad\quad\quad\quad\quad\lesssim \frac{\left|\eta'\left(|\frac{x-\xi_j}{R\mu_{0j}}|\right)\right|}{R^{2}\mu_{0j}^{2}}(\mu_0^{n}R^{2} +\mu_0^{n+\sigma}R)\mu_0^{-\frac{n-2}{2}}|\phi_j|\\
&\quad\quad\quad\quad\quad\lesssim \|\phi\|_{n-2+\sigma, n+a}\sum_{j=1}^k\frac{\mu_j^{-2}\mu_0^{\frac{n-2}{2}+\sigma}(t)}{1+|y_j|^{2+\alpha}}.
\end{aligned}
\end{equation*}
The estimate of $\nabla\eta_{j, R}\cdot\nabla\tilde{\phi}_j - \tilde\phi_j pz^{p-1}\partial_t\eta_{j, R}$ is similar, hence we have (\ref{e4:38}). Therefore, we have $$\|\sum_{j=1}^k\left[2\nabla\eta_{j, R}\nabla_x\tilde\phi_j + \tilde\phi_j\left(\Delta_x-pz^{p-1}\partial_t\right)\eta_{j, R}\right]\|_{*, \beta, 2+\alpha} \lesssim \|\phi\|_{n-2+\sigma, n+a}.$$
Similar estimates as (\ref{e:l2estimate}), we have $$\|z^{1-p}\sum_{j=1}^k\left[2\nabla\eta_{j, R}\nabla_x\tilde\phi_j + \tilde\phi_j\left(\Delta_x-pz^{p-1}\partial_t\right)\eta_{j, R}\right]\|_{L^2_{t_0}, \nu}\lesssim \|\phi\|_{n-2+\sigma, n+a}$$ with $\nu = \frac{n-2+\sigma}{2}$.

{\it Proof of (\ref{e4:39})}.
Observe that
\begin{equation*}\label{e4:49}
\begin{aligned}
&N(\psi + \sum_{j=1}^k\eta_{j, R}\tilde{\phi}_j)\lesssim\left\{
\begin{aligned}
&z^{p-2}\left[|\psi|^2 + \sum_{j=1}^k|\eta_{j, R}\tilde{\phi}_j|^2\right], & \quad \mbox{when}~ 6\geq n,\\
&|\psi|^p + \sum_{j=1}^k|\eta_{j, R}\tilde{\phi}_j|^p, & \quad \mbox{when}~ 6 < n.
\end{aligned}
\right.
\end{aligned}
\end{equation*}
If $6\geq n$, there hold
\begin{equation*}
\begin{aligned}
\left|z^{p-2}(\eta_{j, R}\tilde{\phi}_j)^2\right|&\lesssim |\frac{\tilde\phi_j}{z}z^{p-1}\tilde\phi_j|\lesssim \mu_0^\sigma\|\phi\|^2_{n-2+\sigma, n+a}\frac{\mu_0^{\frac{n-2}{2}+\sigma}}{1+|y_j|^{4}}\\
&\lesssim t_0^{-\varepsilon}\|\phi\|^2_{n-2+\sigma, n+a}\sum_{j=1}^k\frac{\mu_j^{-2}\mu_0^{\frac{n-2}{2}+\sigma}(t)}{1+|y_j|^{2+\alpha}}
\end{aligned}
\end{equation*}
and
\begin{equation*}
\begin{aligned}
\left|z^{p-2}\psi^2\right|\lesssim |\frac{\psi}{z}z^{p-1}\psi| & \lesssim \mu_0^\sigma\|\psi\|^2_{**,\beta, \alpha}\frac{\mu_0^{\frac{n-2}{2}+\sigma}}{1+|y_j|^{4+\alpha}}\\
&\lesssim t_0^{-\varepsilon}\|\psi\|^2_{**,\beta, \alpha}\sum_{j=1}^k\frac{\mu_j^{-2}\mu_0^{\frac{n-2}{2}+\sigma}(t)}{1+|y_j|^{2+\alpha}}.
\end{aligned}
\end{equation*}
In the above, we have used the facts that $\left|\frac{\tilde\phi_j}{z}\right|\leq \mu_0^\sigma(t)\|\phi\|_{n-2+\sigma, n}$ and $\left|\frac{\psi}{z}\right|\leq \mu_0^\sigma(t)\|\psi\|_{**,\beta, \alpha}$ in the region $\cup_{j=1}^kB_{2R\mu_j}(\xi_j)$. If $6 < n$, there hold
\begin{equation*}
\begin{aligned}
\left|\eta_{j, R}\tilde{\phi}_j\right|^p&\lesssim \frac{\mu_0^{(\frac{n-2}{2}+\sigma) p}}{1+|y_j|^{(n+a)p}}\|\phi\|^p_{n-2+\sigma, n+a}\\
&\lesssim \mu_0^{(p-1)(\frac{n-2}{2}+\sigma)}\|\phi\|^p_{n-2+\sigma, n+a}\sum_{j=1}^k\frac{\mu_j^{-2}\mu_0^{\frac{n-2}{2}+\sigma}(t)}{1+|y_j|^{2+\alpha}},
\end{aligned}
\end{equation*}
and
\begin{equation*}
\begin{aligned}
\left|\psi\right|^p&\lesssim\frac{\mu_0^{p(\frac{n-2}{2}+\sigma)}}{1+|y_j|^{p\alpha}}\|\psi\|^p_{**,\beta, a}\\
&\lesssim \mu_0^{(p-1)(\frac{n-2}{2}+\sigma)}\|\psi\|^p_{**,\beta,\alpha}\sum_{j=1}^k\frac{\mu_j^{-2}\mu_0^{\frac{n-2}{2}+\sigma}(t)}{1+|y_j|^{2+\alpha}}.
\end{aligned}
\end{equation*}
The estimates for $\partial_tN$ are similar.

Since
\begin{equation*}
\begin{aligned}
\left|z^{1-p}z^{p-2}\psi^2\right|\lesssim |\frac{\psi}{z}\psi| & \lesssim c |\psi|
\end{aligned}
\end{equation*}
and
\begin{equation*}
\begin{aligned}
\left|z^{1-p}|\psi|^{p}\psi^2\right|\lesssim |\left(\frac{\psi}{z}\right)^{p-1}\psi| & \lesssim c |\psi|,
\end{aligned}
\end{equation*}
we have $$\|z^{1-p}(1-\partial_t)N(\tilde{\phi})\|_{L^2_{t_0}, \nu}\leq c \|\psi\|_{**, \beta, \alpha, \nu}$$ with $\nu = \frac{n-2+\sigma}{2}$.

Here we have used the fact that: in the region $\Omega\setminus B_{2R\mu_j}(\xi_j)$, the solution $\psi$ of (\ref{e:outerproblem}) satisfying the estimate
$$
|\psi(x, t)|\lesssim  |z(x, t)|.
$$
Indeed, observe that in the region $\Omega\setminus \cup_{j=1}^kB_{2R\mu_j}(\xi_j)$, the function $u(x, t) = z(x, t)+\psi(x, t)$ is a solution of the problem
\begin{equation}\label{e:outerproblenearboundary}
\begin{cases}
\begin{aligned}
\frac{\partial u^p}{\partial t} & = \Delta u + u^p \text{ in }\Omega\times (t_0, +\infty),\\
u & = 0\text{ on }\partial\Omega\times (t_0, +\infty),\\
u (x, t_0) &= u_0(x) : = z(x, t_0)+\psi_0(x)\text{ on }\Omega.
\end{aligned}
\end{cases}
\end{equation}
Suppose $v = v(x)$ is the bounded solution of $\Delta v + 1 =0$ in $\Omega$ satisfying $v = 0$ on $\partial \Omega$. Then $v > 0$ in $\Omega$ and the function
\begin{equation*}
\begin{aligned}
&\bar\psi (x,\tau) = \left(T-\tau\right)^{\frac{1+\delta}{1-m}}v(x)^{\frac{1}{m}}\text{ with }m = \frac{n-2}{n+2}
\end{aligned}
\end{equation*}
is a super-solution of $\partial_\tau w - \Delta w^m = 0$. Indeed, we have
\begin{equation*}
\begin{aligned}
\partial_\tau \bar\psi - \Delta \bar\psi^m &= -\frac{1+\delta}{1-m} \left(T-\tau\right)^{\frac{m+\delta}{1-m}}v(x)^{\frac{1}{m}} + \left(T-\tau\right)^{\frac{m(1+\delta)}{1-m}}\\
& = \left(T-\tau\right)^{\frac{m(1+\delta)}{1-m}}\left(-\frac{1+\delta}{1-m} \left(T-\tau\right)^{\frac{\delta-\delta m}{1-m}}v(x)^{\frac{1}{m}} + 1\right) > 0
\end{aligned}
\end{equation*}
when $\tau$ is close to $T$. Then by the maximum principal for the fast diffusion equation (for example, Theorem 1.1.1 in \cite{DaskalopoulosandKenig}), we have $|w(x, \tau)|\leq \left(T-\tau\right)^{\frac{1+\delta}{1-m}}v(x)^{\frac{1}{m}}$ when $\tau$ is close to $T$. From the relation (\ref{e:transformation}), the solution of (\ref{e:outerproblenearboundary})
can be controlled as $|u(x, t)|\leq \left(T-\tau\right)^{\frac{m\delta}{1-m}} v(x)\leq \left(Te^{-t}\right)^{\frac{m\delta}{1-m}}v(x)$ if $u_0: = z(x, t_0) + \psi_0(x)$ satisfies $\|u_0\|_{L^\infty(\Omega)}\leq e^{-\varepsilon t_0}$ for $t_0 > 0$ large enough and $\varepsilon > 0$ is small enough. Hence in the region $\Omega\setminus \cup_{j=1}^kB_{\epsilon}(\xi_j)$ with $\epsilon > 0$ small enough, the solution $\psi$ of (\ref{e:outerproblem}) satisfies the esitmate
\begin{equation}\label{e:estimateofpsi}
|\psi|\lesssim |z| + \left(T-\tau\right)^{\frac{m\delta}{1-m}} v(x) \lesssim |z|+\left(Te^{-t}\right)^{\frac{m\delta}{1-m}}v(x).
\end{equation}
Furthermore, $|z|\leq C\mu^{\frac{n-2}{2}}_0(t) v(x)$ in the $\Omega\setminus \cup_{j=1}^kB_{\epsilon}(\xi_j)$ for some positive constant $C > 0$, $\epsilon > 0$ is a fixed small number. Indeed, $z$ satisfies $\Delta z + \mu^{-\frac{n+2}{2}}U^{\frac{n+2}{n-2}}(y)=0$ in $\Omega\setminus B_{\epsilon}(\xi)$, $z = 0 $ on $\partial \Omega$, $z > C\mu^{\frac{n-2}{2}}v(x) $ on $\partial B_{\epsilon}(\xi)$ (for simplicity, we assume $k=1$ and denote $\xi_j$ as $\xi$). From this we see that $z > C\mu^{(n-2)/2}  v (x)$ in $\Omega\setminus B_{\epsilon}(\xi)$ and $\left(Te^{-t}\right)^{\frac{m\delta}{1-m}}v(x)/z\lesssim \left(Te^{-t}\right)^{\frac{m\delta}{1-m}}\mu^{-\frac{n-2}{2}}\ll 1$ when $t_0$ is large. In the region $B_{\epsilon}(\xi)$, we have $\left(Te^{-t}\right)^{\frac{m\delta}{1-m}}v(x)/z\lesssim \left(Te^{-t}\right)^{\frac{m\delta}{1-m}}\mu^{\frac{n-2}{2}-(n-2)}\ll 1$. From (\ref{e:estimateofpsi}), we obtain $|\psi(x, t)|\lesssim  |z(x, t)|$.

{\it Proof of (\ref{e4:3900})}. From the definition of $\|\phi\|_{n-2+\sigma, n+a}$, we have
\begin{equation*}
\begin{aligned}
\left|\left(pz^{p-1}\right)_t\tilde\phi\right|\lesssim \left|z^{p-1}\tilde\phi\right|\left|\frac{\dot\mu +\dot\xi}{\mu}\right|& \lesssim\mu_0^{n-2}\|\phi\|_{n-2+\sigma, n+a}\sum_{j=1}^k\frac{\mu_j^{-2}\mu_0^{\frac{n-2}{2}+\sigma}}{1+|y_j|^{n+a+4}}\\
& \lesssim t_0^{-\varepsilon}\|\phi\|_{n-2+\sigma, n+a}\sum_{j=1}^k\frac{\mu_j^{-2}\mu_0^{\frac{n-2}{2}+\sigma}}{1+|y_j|^{2+\alpha}}.
\end{aligned}
\end{equation*}
Therefore, we have $$\|\left(pz^{p-1}\right)_t\tilde\phi\|_{*, \beta, 2+\alpha} \lesssim \|\phi\|_{n-2+\sigma, n+a}.$$
Similar to (\ref{e:l2estimate}), we have $$\|z^{1-p}\left(pz^{p-1}\right)_t\tilde\phi\|_{L^2_{t_0}, \nu}\lesssim\|\phi\|_{n-2+\sigma, n+a}$$ with $\nu = \frac{n-2+\sigma}{2}$.

{\it Proof of (\ref{e4:3900001})}. From the definition of $\|\phi\|_{n-2+\sigma, n+a}$, we have
\begin{equation*}
\begin{aligned}
&\left|pz^{p-1}\partial_t\sum_{j=1}^k\eta_{j, R}\tilde\phi_j\right|\\ &\lesssim \left|pz^{p-1}\right|\sum_{j=1}^k\left(\left|\partial_t\eta_{j, R}\right|\left|\tilde\phi_j\right|+\left|\eta_{j, R}\right|\left|\partial_t\tilde\phi_j\right|\right)\\
&\lesssim\left|pz^{p-1}\right|\sum_{j=1}^k\left|\eta'\left(|\frac{x-\xi_j}{R\mu_{0j}}|\right)\left(\frac{|x-\xi_j|}{R\mu_0^2}\dot{\mu_0}+\frac{1}{R\mu_0}\dot{\xi}\right)\right|
\mu_0^{-\frac{n-2}{2}}|\phi_j|\\
&\quad +\left|pz^{p-1}\right|\sum_{j=1}^k\left|\eta_{j, R}\right|\left(\mu_0^{-\frac{n-2}{2}}|\partial_t\phi_j|+\mu_0^{-\frac{n-2}{2}}\frac{\dot\mu_0}{\mu_0}|\phi_j|\right)\\
&\lesssim \|\phi\|_{n-2+\sigma, n+a}|z^{p-1}|\frac{\mu_0^{\frac{n-2}{2}+\sigma}}{1+|y|^{n+a}}\lesssim \|\phi\|_{n-2+\sigma, n+a}\sum_{j=1}^k\frac{\mu_j^{-2}\mu_0^{\frac{n-2}{2}+\sigma}}{1+|y_j|^{2+\alpha}}.
\end{aligned}
\end{equation*}
Therefore, we have $$\|pz^{p-1}\partial_t\sum_{j=1}^k\eta_{j, R}\tilde\phi_j\|_{*, \beta, 2+\alpha} \lesssim \|\phi\|_{n-2+\sigma, n+a}.$$
Similar to (\ref{e:l2estimate}), we have $$\|p\partial_t\sum_{j=1}^k\eta_{j, R}\tilde\phi_j\|_{L^2_{t_0}, \nu}\lesssim\|\phi\|_{n-2+\sigma, n+a}$$ with $\nu = \frac{n-2+\sigma}{2}$.

{\it Proof of (\ref{e4:390000111})}. From the definition of $\|\phi\|_{n-2+\sigma, n+a}$, we have
\begin{equation*}
\begin{aligned}
&\left|\sum_{j=1}^kpU^{p-1}_j\partial_t\left(\eta_{j, R}\tilde\phi_j\right)\right|\\ &\lesssim \sum_{j=1}^k\left|pU^{p-1}_j\right|\left(\left|\partial_t\eta_{j, R}\right|\left|\tilde\phi_j\right|+\left|\eta_{j, R}\right|\left|\partial_t\tilde\phi_j\right|\right)\\
&\lesssim\sum_{j=1}^k\left|pU^{p-1}_j\right|\left|\eta'\left(|\frac{x-\xi_j}{R\mu_{0j}}|\right)\left(\frac{|x-\xi_j|}{R\mu_0^2}\dot{\mu_0}+\frac{1}{R\mu_0}\dot{\xi}\right)\right|
\mu_0^{-\frac{n-2}{2}}|\phi_j|\\
&\quad +\sum_{j=1}^k\left|pU^{p-1}_j\right|\left|\eta_{j, R}\right|\left(\mu_0^{-\frac{n-2}{2}}|\partial_t\phi_j|+\mu_0^{-\frac{n-2}{2}}\frac{\dot\mu_0}{\mu_0}|\phi_j|\right)\\
&\lesssim \|\phi\|_{n-2+\sigma, n+a}|z^{p-1}|\frac{\mu_0^{\frac{n-2}{2}+\sigma}}{1+|y|^{n+a}}\lesssim \|\phi\|_{n-2+\sigma, n+a}\sum_{j=1}^k\frac{\mu_j^{-2}\mu_0^{\frac{n-2}{2}+\sigma}}{1+|y_j|^{2+\alpha}}.
\end{aligned}
\end{equation*}
Therefore, we have $$\|\sum_{j=1}^kpU^{p-1}_j\partial_t\left(\eta_{j, R}\tilde\phi_j\right)\|_{*, \beta, 2+\alpha} \lesssim \|\phi\|_{n-2+\sigma, n+a}.$$
Similar to (\ref{e:l2estimate}), we have $$\|z^{1-p}\sum_{j=1}^kpU^{p-1}_j\partial_t\left(\eta_{j, R}\tilde\phi_j\right)\|_{L^2_{t_0}, \nu}\lesssim\|\phi\|_{n-2+\sigma, n+a}$$ with $\nu = \frac{n-2+\sigma}{2}$.

Now we set
\begin{equation*}\label{e4:54}
\mathcal{B}=\left\{\psi:\|\psi\|_{**, \beta, \alpha, \nu}\leq M t_0^{-\varepsilon}\right\}
\end{equation*}
with $\beta = \frac{n-2}{2}+\sigma$ and $\nu = \frac{n-2+\sigma}{2}$. Here the constant $M$ is large but independent of $t$ and $t_0$.
For any $\psi\in \mathcal{B}$, $\mathcal{A}(\psi)\in \mathcal{B}$ as a consequence of the estimations (\ref{e4:37})-(\ref{e4:390000111}). And similar estimations imply that, for any $\psi_1$, $\psi_2\in\mathcal{B}$, there holds
\begin{equation*}\label{e4:55}
\|\mathcal{A}(\psi^{(1)}) - \mathcal{A}(\psi^{(2)})\|_{**,\beta, \alpha, \nu}\leq C\|\psi^{(1)}-\psi^{(2)}\|_{**,\beta, \alpha, \nu},
\end{equation*}
for a constant $C<1$ when $t_0$ is chosen large enough. Therefore, $\mathcal{A}$ is a contraction map in $\mathcal{B}$ and there exists a fixed point $\psi$ of $\mathcal{A}$, which is a solution to the outer problem (\ref{e:outerproblem}).
Therefore, we obtain the following result.
\begin{prop}\label{propositionouterproblem}
Assume $\lambda$, $\xi$, $\dot{\lambda}$, $\dot{\xi}$ satisfy the conditions (\ref{e4:210}) and (\ref{e4:220}), $\phi = (\phi_1,\cdots, \phi_k)$ satisfies conditions (\ref{e4:25}), $\psi_0\in C^2(\Omega)$ and
\begin{equation*}\label{e4:26}
\|\psi_0\|_{L^{\infty}(\Omega)} + \|\psi_0\|_{L^2_{t_0}, \nu}\leq t_0^{-\varepsilon}
\end{equation*}
for $\nu = \frac{n-2+\sigma}{2}$. Then there exists a large enough $t_0 > 0$ and a small constant $\alpha > 0$ such that the outer problem (\ref{e:outerproblem}) possesses a unique solution $\psi = \Psi[\lambda, \xi, \dot{\lambda}, \dot{\xi}, \phi]$. Moreover, there hold
\begin{equation*}\label{e4:27}
|\psi(x, t)|\chi_{\cup_{j=1}^kB_{2R}(\xi_j)}\lesssim t_0^{-\varepsilon}\sum_{j=1}^k\frac{\mu_0^{\frac{n-2}{2}+\sigma}(t)}{1+|y_j|^{\alpha}} + \sum_{j=1}^ke^{-\delta(t-t_0)}\|\psi_0\|_{L^{\infty}(\Omega)},
\end{equation*}
\begin{equation*}
\begin{aligned}
|\nabla\psi(x, t)|\chi_{\cup_{j=1}^kB_{2R}(\xi_j)}&\lesssim t_0^{-\varepsilon}\sum_{j=1}^k\frac{\mu_0^{-1+\frac{n-2}{2}+\sigma}(t)}{1+|y_j|^{\alpha-1}}
\end{aligned}
\end{equation*}
and
$$
\|\psi\|_{H^2_{t_0}, \nu} \lesssim t_0^{-\varepsilon}.
$$
Here $y_j=\frac{x-\xi_j}{\mu_{0j}}$.
\end{prop}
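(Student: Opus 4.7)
The plan is to realize $\psi$ as the unique fixed point of the affine operator
$\mathcal{A}(\psi) := T(f(\psi), \psi_0)$ on the closed ball
$\mathcal{B}=\{\psi:\|\psi\|_{**,\beta,\alpha,\nu}\le M t_0^{-\varepsilon}\}$, where $T$ is the linear solution operator furnished by Proposition \ref{prop3000} and $f(\psi)$ is the right-hand side defined in \eqref{e4:35}. Note that under this identification, any solution of \eqref{e:outerproblem} with initial datum $\psi_0$ and forcing of the form \eqref{e4:35} is automatically a fixed point of $\mathcal{A}$, so existence via Banach's fixed point theorem will deliver the desired $\psi$.

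First, I would invoke Proposition \ref{prop3000} to get the operator bound
$\|\mathcal{A}(\psi)\|_{**,\beta,\alpha,\nu}\lesssim \|f(\psi)\|_{*,\beta,2+\alpha,\nu}+\|\psi_0\|_{L^2(\Omega)}+e^{-\delta(t-t_0)}\|\psi_0\chi_{\cup_j B_{2R\mu_j}(\xi_j)}\|_{L^{\infty}(\Omega)}$, with $\beta=\tfrac{n-2}{2}+\sigma$ and $\nu=\tfrac{n-2+\sigma}{2}$. Then, collecting the six pointwise and $L^2_{t_0},\nu$-estimates \eqref{e4:37}--\eqref{e4:390000111} already carried out, and using the standing assumptions \eqref{e4:210}, \eqref{e4:220}, \eqref{e4:25} together with $\|\psi_0\|_{L^\infty(\Omega)}+\|\psi_0\|_{L^2_{t_0},\nu}\le t_0^{-\varepsilon}$, one obtains
\[
\|f(\psi)\|_{*,\beta,2+\alpha,\nu}\le C\bigl(t_0^{-\varepsilon}+\|\phi\|_{n-2+\sigma,n+a}+t_0^{-\varepsilon}\|\psi\|_{**,\beta,\alpha,\nu}+\|\psi\|_{**,\beta,\alpha,\nu}^{\min(2,p)}\bigr),
\]
so for $\psi\in\mathcal{B}$ this is bounded by $C(1+M)t_0^{-\varepsilon}$. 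Fixing $M$ large and then $t_0$ large enough forces $\mathcal{A}(\mathcal{B})\subseteq\mathcal{B}$.

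Next, I would establish the contraction property. All terms in $f$ are linear in $\psi$ except for $(1-\partial_t)N[\tilde\phi]$, which is at least quadratic in $\psi$; applying the same computations used for \eqref{e4:39}--\eqref{e4:390000111} to the differences $f(\psi^{(1)})-f(\psi^{(2)})$ yields
$\|f(\psi^{(1)})-f(\psi^{(2)})\|_{*,\beta,2+\alpha,\nu}\le C t_0^{-\varepsilon}\|\psi^{(1)}-\psi^{(2)}\|_{**,\beta,\alpha,\nu}$ for $\psi^{(i)}\in\mathcal{B}$. Composing with $T$ and taking $t_0$ large drives the Lipschitz constant below $1$, so Banach's theorem produces a unique fixed point $\psi=\Psi[\lambda,\xi,\dot\lambda,\dot\xi,\phi]\in\mathcal{B}$. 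The claimed pointwise decay of $\psi$ and $\nabla\psi$ in $\cup_j B_{2R\mu_j}(\xi_j)$ and the $H^2_{t_0},\nu$ bound are then immediate from the estimates in Proposition \ref{prop3000} applied to $f(\psi)$, which itself has $\ast$-norm of size $t_0^{-\varepsilon}$.

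The main difficulty I anticipate is confirming that \emph{every} term on the right of \eqref{e4:35}, not only the expected $S^{*,out}_{\mu,\xi}$, truly contributes at size $t_0^{-\varepsilon}$ in the composite norm $\|\cdot\|_{*,\beta,2+\alpha,\nu}$. Three points require care: (i) the cut-off commutator $\nabla\eta_{j,R}\cdot\nabla\tilde\phi_j+\tilde\phi_j(\Delta-pU_j^{p-1}\partial_t)\eta_{j,R}$, whose smallness hinges on $R=t_0^{\varepsilon}$ from \eqref{e:definitionofR} together with $\|\phi\|_{n-2+\sigma,n+a}\le ct_0^{-\varepsilon}$; (ii) the nonlinear term $N[\tilde\phi]$, for which one must distinguish $n\le 6$ (quadratic bound using $z^{p-2}\psi^2\lesssim (\psi/z)z^{p-1}\psi$) from $n>6$ (pure $|\psi|^p$ bound), both relying on $|\psi/z|\lesssim \mu_0^{\sigma}$ in the concentration regions; and (iii) the $L^2_{t_0},\nu$ estimate of $N[\tilde\phi]$ in the outer region, which uses the supersolution barrier $(Te^{-t})^{m\delta/(1-m)}v(x)$ and the bound $|z|\gtrsim\mu_0^{(n-2)/2}v(x)$ to conclude $|\psi|\lesssim|z|$ away from the bubble cores. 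Once these are absorbed, the contraction estimate and the a priori bounds close in the same Banach space, completing the argument.
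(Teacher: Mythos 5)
Your proposal is correct and follows essentially the same route as the paper: the solution is obtained as a fixed point of $\mathcal{A}(\psi)=T(f(\psi),\psi_0)$ on the ball $\mathcal{B}=\{\psi:\|\psi\|_{**,\beta,\alpha,\nu}\le Mt_0^{-\varepsilon}\}$, with the linear theory of Proposition \ref{prop3000} combined with the estimates (\ref{e4:37})--(\ref{e4:390000111}) giving both the invariance of $\mathcal{B}$ and the contraction, and the stated pointwise, gradient and $H^2_{t_0},\nu$ bounds following from membership in $\mathcal{B}$. The only slip is calling $\mathcal{A}$ affine at the outset (it is nonlinear in $\psi$ through $N[\tilde\phi]$), but you handle this correctly in the contraction step, so it does not affect the argument.
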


\begin{remark}\label{propositionouterproblem1}
The solution $\Psi$ obtained in Proposition \ref{propositionouterproblem} depends smoothly on the parameters $\lambda$, $\xi$, $\dot{\lambda}$, $\dot{\xi}$, $\phi$, for $y_j=\frac{x-\xi_j}{\mu_{0j}}$. Indeed, using Lemma \ref{lemma:lineartheoryforouterproblem} and the same arguments as Proposition 4.2 of \cite{delPinoMussoJEMS}, in the domain $\cup_{j=1}^kB_{2R\mu_j}(\xi_j)$, we have
\begin{equation*}\label{e4:64}
\big|\partial_\lambda\Psi[\lambda,\xi,\dot{\lambda},\dot{\xi},\phi][\bar{\lambda}](x, t)\big|\lesssim t_0^{-\varepsilon}\|\bar{\lambda}(t)\|_{1+\sigma}\left(\sum_{j=1}^k\frac{\mu_0^{\frac{n-2}{2}+\sigma-1}(t)}{1+|y_j|^{\alpha}}\right),
\end{equation*}
\begin{equation*}\label{e4:74}
\big|\partial_\xi\Psi[\lambda,\xi,\dot{\lambda},\dot{\xi},\phi][\bar{\xi}](x, t)\big|\lesssim t_0^{-\varepsilon}\|\bar{\xi}(t)\|_{1+\sigma}\left(\sum_{j=1}^k\frac{\mu_0^{\frac{n-2}{2}+\sigma-1}(t)}{1+|y_j|^{\alpha}}\right),
\end{equation*}
\begin{equation*}\label{e4:75}
\big|\partial_{\dot{\xi}}\Psi[\lambda,\xi,\dot{\lambda},\dot{\xi},\phi][\dot{\bar{\xi}}](x, t)\big|\lesssim t_0^{-\varepsilon}\mu_0^{n-1+\sigma}\|\dot{\bar{\xi}}(t)\|_{n-1+\sigma}\left(\sum_{j=1}^k\frac{\mu_0^{-\frac{n}{2}+\sigma}(t)}{1+|y_j|^{\alpha}}\right),
\end{equation*}
\begin{equation*}\label{e4:76}
\big|\partial_{\dot{\lambda}}\Psi[\lambda,\xi,\dot{\lambda},\dot{\xi},\phi][\dot{\bar{\lambda}}](x, t)\big|\lesssim t_0^{-\varepsilon}\mu_0^{n-1+\sigma}\|\dot{\bar{\lambda}}(t)\|_{n-1+\sigma}\left(\sum_{j=1}^k\frac{\mu_0^{-\frac{n}{2}+\sigma}(t)}{1+|y_j|^{\alpha}}\right),
\end{equation*}
\begin{equation*}\label{e4:84}
\big|\partial_{\phi}\Psi[\lambda,\xi,\dot{\lambda},\dot{\xi},\phi][\bar{\phi}](x, t)\big|\lesssim \|\bar{\phi}(t)\|_{n-2+\sigma, n+a}\left(\sum_{j=1}^k\frac{\mu_0^{\frac{n-2}{2}+\sigma}(t)}{1+|y_j|^{\alpha}}\right).
\end{equation*}
\end{remark}

\section{The inner problem (\ref{e:innerproblemselfsimilar})}
To solve the highly nonlinear problem (\ref{e:innerproblemselfsimilar}), we need a linear theory first, which is the content of
\subsection{The linear theory of the inner problem (\ref{e:innerproblemselfsimilar})}
In this subsection, we consider the following linear equation
\begin{equation}\label{e:mainsection3}
\begin{aligned}
-pU^{p-1}\phi_t +\Delta\phi + pU^{p-1}\phi + U^{p-1}h = 0 \text{ on }\mathbb{R}^n,
\end{aligned}
\end{equation}
with $h = h(y, t)$ being supported on the ball $B_{2R}(0)$ and under the orthogonality conditions
\begin{equation}\label{e:orthogonalitycondition111z0}
\int_{B_{2R}}h(y, t)Z_j(y)U^{p-1}(y)dy = 0\text{ for }j = 0, 1,\cdots, n+1.
\end{equation}
Equation (\ref{e:mainsection3}) is a degenerate parabolic equation, therefore a natural way is to lift it to the standard sphere $\mathbb{S}^n$, which becomes a classical (non-degenerate) parabolic problem on $\mathbb{S}^n$. Similarly to (\ref{e:lifting}), we define $\tilde{g}$ on $\mathbb{S}^n$ to be
\begin{equation*}
h(y) = \tilde h(\pi(y))\left(\frac{2}{1+|y|^2}\right)^{\frac{n-2}{2}},\quad y\in \mathbb{R}^n.
\end{equation*}
Then standard computation shows that (\ref{e:mainsection3}) is equivalent to the following linear heat problem on $\mathbb{S}^n$
\begin{equation}\label{e:linearproblemonsphere}
\partial_t\tilde\phi = \left(\Delta_{\mathbb{S}^n} + \lambda_1\right)\tilde\phi + \tilde h \quad\text{ on }\quad \mathbb{S}^n.
\end{equation}
Here $\lambda_1 = n$ is the second eigenvalue of $\Delta_{\mathbb{S}^n}$ with eigenfunctions $\tilde Z_j$, $j = 1,\cdots, n+1$, given by the functions
\begin{equation*}
Z_i(y) = \tilde Z_i(\pi(y))\left(\frac{2}{1+|y|^2}\right)^{\frac{n-2}{2}},\quad y\in \mathbb{R}^n.
\end{equation*}
Recall that the space $L^2(\mathbb{S}^n)$ has an orthonormal basis $\Theta_m$, $m = 0, 1, \cdots, $ which are eigenfunctions of the problem
\begin{equation}\label{liftofkernelfunctions}
\Delta_{\mathbb{S}^n}\Theta_m + \lambda_m\Theta_m = 0 \quad\text{in}\quad\mathbb{S}^n
\end{equation}
so that
\begin{equation*}
0 = \lambda_0 < \lambda_1 = \cdots = \lambda_{n+1} = n < \lambda_{n+2} \leq \cdots.
\end{equation*}
One has $\Theta_0(y) = \alpha_0$ and $\Theta_j(y) = \alpha_1 y_j$, $j = 1,\cdots, n+1$, for constant numbers $\alpha_0$ and $\alpha_1$.
\begin{prop}\label{proposition5.2000}
Suppose $a \in (-n, -2)$, $\nu > 0$, $\|\tilde{h}\|_{a,\nu} < +\infty$ and
\begin{equation*}
\int_{\mathbb{S}^n}h(\tilde y,t)Z_j(\tilde y)d\tilde y = 0~\text{ for all }~t\in (t_0,\infty), ~j = 1,\cdots, n+1,
\end{equation*}
then there exists a function $\tilde \phi = \tilde \phi[\tilde{h}](\tilde y, t)$ satisfying (\ref{e:linearproblemonsphere}) and the estimate
\begin{equation*}
\begin{aligned}
&(\pi-|\tilde y|)|\nabla \tilde\phi(\tilde y, t)| + |\tilde\phi(\tilde y,t)|\lesssim t^{-\nu}(\pi-|\tilde y|)^{2+a}\|\tilde h\|_{a,\nu}.
\end{aligned}
\end{equation*}
\end{prop}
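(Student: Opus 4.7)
The plan is to exploit the lift (\ref{e:lifting}) to turn the degenerate parabolic equation (\ref{e:mainsection3}) into the uniformly parabolic problem (\ref{e:linearproblemonsphere}) on $\mathbb{S}^n$ and then analyse $\partial_t-(\Delta_{\mathbb{S}^n}+n)$ mode by mode. On the eigenbasis $\{\Theta_m\}$ from (\ref{liftofkernelfunctions}), this operator has eigenvalues $n-\lambda_m$: a single unstable direction at $m=0$ (eigenvalue $n$), an $(n+1)$-dimensional kernel spanned by $\Theta_1,\ldots,\Theta_{n+1}$, and an infinite exponentially stable tail $n-\lambda_m<0$ for $m\geq n+2$. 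The hypotheses are precisely the orthogonality conditions needed to eliminate the neutral modes, so the strategy splits into a modal construction giving time decay and a supersolution argument giving the spatial profile.

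First I would expand $\tilde h(\tilde y,t)=\sum_m h_m(t)\Theta_m(\tilde y)$ and $\tilde\phi(\tilde y,t)=\sum_m a_m(t)\Theta_m(\tilde y)$. The orthogonality forces $h_1\equiv\cdots\equiv h_{n+1}\equiv 0$, and I set the corresponding $a_m$ to zero. For the stable modes $m\geq n+2$ I solve $\dot a_m=(n-\lambda_m)a_m+h_m$ by forward Duhamel from $t_0$ with zero initial data; the exponential decay of the semigroup combined with the assumed bound $|h_m(s)|\lesssim s^{-\nu}\|\tilde h\|_{a,\nu}$ gives $|a_m(t)|\lesssim(\lambda_m-n)^{-1}t^{-\nu}\|\tilde h\|_{a,\nu}$, which is summable in $m$ against the spherical harmonics. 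The unstable mode $m=0$ is not controlled by any orthogonality hypothesis, so I instead select the unique solution bounded as $t\to\infty$ through the backward Duhamel formula
\begin{equation*}
a_0(t)=-\int_t^\infty e^{n(t-s)}h_0(s)\,ds,
\end{equation*}
which converges and satisfies $|a_0(t)|\lesssim t^{-\nu}\|\tilde h\|_{a,\nu}$. This yields a well-defined $\tilde\phi$ with uniform time decay $\tilde\phi(\cdot,t)\to 0$ as $t\to\infty$.

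To upgrade this into the pointwise spatial estimate I would run a barrier comparison with a radial supersolution. Writing $r=|\tilde y|$ for the geodesic distance to the north pole and using $\cot r\sim-(\pi-r)^{-1}$ near the south pole, a direct computation yields
\begin{equation*}
(\Delta_{\mathbb{S}^n}+n)(\pi-r)^{2+a}=(2+a)(a+n)(\pi-r)^{a}+n(\pi-r)^{2+a}+\text{l.o.t.},
\end{equation*}
whose leading coefficient $(2+a)(a+n)$ is strictly negative for $-n<a<-2$. Consequently, taking $\bar\phi(\tilde y,t)=Mt^{-\nu}(\pi-r)^{2+a}\|\tilde h\|_{a,\nu}$ with $M$ sufficiently large, the inequality $\partial_t\bar\phi-(\Delta_{\mathbb{S}^n}+n)\bar\phi\geq|\tilde h|$ holds in a full neighborhood of the south pole, while away from that neighborhood $(\pi-r)^{2+a}$ is bounded below and a cheap additive correction by a fixed positive global supersolution of $(\Delta_{\mathbb{S}^n}+n)w=1$ makes $\bar\phi$ a supersolution on all of $\mathbb{S}^n$. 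Applying the weak maximum principle to $\bar\phi\pm\tilde\phi$ on $\mathbb{S}^n\times[t_0,T]$ and using the already established decay of $\tilde\phi$ at $t=T\to\infty$ from the modal analysis, I conclude $|\tilde\phi|\leq\bar\phi$ globally.

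The gradient bound $(\pi-r)|\nabla\tilde\phi|\lesssim t^{-\nu}(\pi-r)^{2+a}\|\tilde h\|_{a,\nu}$ then follows by a standard interior parabolic rescaling on the cylinder of size $\sim(\pi-r)$ centered at $(\tilde y,t)$, on which the lifted equation is uniformly parabolic with bounded coefficients, converting the $L^\infty$ control on $\tilde\phi$ into a gradient bound. The step I expect to be hardest is the clean treatment of the unstable zero mode: the backward Duhamel selection is forced on me because the orthogonality hypothesis does \emph{not} touch $\Theta_0$, and I must verify that this particular branch is compatible with the supersolution argument, namely that it decays as $t\to\infty$ uniformly on $\mathbb{S}^n$ so that the maximum principle comparison on $[t_0,T]$ passes to $T\to\infty$ without losing the weight $t^{-\nu}$. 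Secondary technical points—summability of the high-mode series in $L^\infty$ and the gluing of the singular barrier with a bounded global supersolution—are routine but require careful bookkeeping.
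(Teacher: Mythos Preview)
Your approach is genuinely different from the paper's, and the barrier step contains a real gap. The operator $\partial_t-(\Delta_{\mathbb{S}^n}+n)$ carries the unstable direction $\Theta_0$, and this rules out the ``cheap additive correction'' you invoke: if a positive $w$ on $\mathbb{S}^n$ satisfied $-(\Delta_{\mathbb{S}^n}+n)w\geq c>0$, integration over the sphere would give $-n\int_{\mathbb{S}^n}w\geq c\,|\mathbb{S}^n|>0$, contradicting $w>0$. So no positive global supersolution of the type you need exists. The same obstruction bites your radial barrier $Mt^{-\nu}(\pi-r)^{2+a}$ directly: away from the north pole the term $-n\bar\phi$ in $-(\Delta_{\mathbb{S}^n}+n)\bar\phi$ is uncompensated, and in fact near the south pole $(n-1)\cot r\cdot f'(r)$ blows up with the wrong sign (your purely radial profile is not even $C^1$ there). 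Hence the comparison $\bar\phi\pm\tilde\phi\geq 0$ cannot be run on all of $\mathbb{S}^n\times[t_0,T]$. A secondary issue is that for $a\leq -n/2$ (which covers the entire admissible range when $n\leq 4$, and part of it for $n\geq 5$) one has $\tilde h(\cdot,t)\notin L^2(\mathbb{S}^n)$, so the spectral decomposition you start from needs an interpretation beyond the naive $L^2$ one.

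The paper circumvents global barriers entirely. It first solves, on each finite slab $(t_0,t_1)$, the \emph{modified} problem $\partial_t\tilde\phi=(\Delta_{\mathbb{S}^n}+n)\tilde\phi+\tilde h-c(t)\tilde Z_0$ with $c(t)$ chosen so that $\int_{\mathbb{S}^n}\tilde\phi\,\tilde Z_0=0$; this is how the unstable mode is neutralised, and it plays the same role as your backward Duhamel formula for $a_0$. The weighted bound $\|\tilde\phi\|_{a+2,t_1}\lesssim\|\tilde h\|_{a,t_1}$ is then obtained by a blow-up contradiction argument: assuming failure along a sequence, one rescales at the point where the weighted norm is saturated and extracts a limit, which is either a nontrivial bounded ancient solution of $\partial_t\bar\phi=(\Delta_{\mathbb{S}^n}+n)\bar\phi$ orthogonal to $\tilde Z_0,\ldots,\tilde Z_{n+1}$ (ruled out by an energy argument), or a nontrivial ancient solution of the flat heat equation on $\mathbb{R}^n$ with decay $|y|^{a+2}$ (ruled out by a direct barrier, now legitimate because the zero-order term is gone). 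Finally one lets $t_1\to\infty$. The compactness argument lives directly in weighted $L^\infty$ and never requires a positive supersolution for $\Delta_{\mathbb{S}^n}+n$ on the whole sphere.
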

\begin{remark}
Here and in the following, $d\tilde y$ is the sphere measure on $\mathbb{S}^n$, and $|\tilde y|\in [0, \pi]$ is the geodesic distance of a point $\tilde y\in \mathbb{S}^n$ to the south pole $(0,\cdots, 0, -1)$, $\|\tilde h\|_{a, \nu}$ is least positive number $M$ such that
$$
|\tilde h(y, t)|\leq Mt^{-\nu}(\pi-|\tilde y|)^a.
$$
\end{remark}

\begin{lemma}\label{l5:3}
Suppose $a \in (-n, -2)$, $\nu > 0$, $\|\tilde h\|_{a,\nu} < +\infty$ and
\begin{equation*}
\int_{\mathbb{S}^n}\tilde h(\tilde y,t)\tilde Z_j(\tilde y)d\tilde y = 0~\text{ for all }~t\in (t_0,\infty), ~j = 1,\cdots, n+1.
\end{equation*}
Then, for any sufficiently large number $t_1 > 0$, the solution $(\phi(\tilde y,t), c(t))$ of the problem
\begin{eqnarray}\label{e5:32}
\left\{
\begin{aligned}
&\partial_t\tilde\phi = \left(\Delta_{\mathbb{S}^n} + \lambda_1\right)\tilde\phi + \tilde h -c(t)\tilde Z_0(\tilde y),~ \tilde y\in \mathbb{S}^n,~t\geq t_0,\\
&\int_{\mathbb{S}^n}\tilde \phi(\tilde y,t)\cdot\tilde Z_0(\tilde y)d\tilde y = 0~\mbox{ for all }~ t\in (t_0,+\infty),\\
&\tilde \phi(\tilde y,t_0) = 0,~\tilde y\in \mathbb{S}^n,
\end{aligned}
\right.
\end{eqnarray}
satisfies the estimates
\begin{equation}\label{e5:35}
\|\tilde \phi(\tilde y,t)\|_{a+2,t_1}\lesssim \|\tilde h\|_{a,t_1}
\end{equation}
and
\begin{equation*}
|c(t)|\lesssim t^{-\nu}\|\tilde h\|_{a,t_1}~ \text{ for }~t\in (t_0,t_1).
\end{equation*}
Here $\|\tilde h\|_{b,t_1}:=\sup_{t\in (t_0,t_1)}t^\nu\|(\pi-|\tilde y|)^{-b}\tilde h\|_{L^\infty(\mathbb{S}^n)}$.
\end{lemma}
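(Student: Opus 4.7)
My plan is to proceed mode by mode in a spherical harmonics expansion, then upgrade the resulting $L^2$ information to the weighted pointwise bound by means of a stationary barrier, in the spirit of Lemma 3.2.

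Decompose $\tilde\phi(\tilde y,t)=\sum_{m\geq 0}\phi_m(t)\Theta_m(\tilde y)$ and similarly for $\tilde h$. The Lagrange multiplier $c(t)$ is determined by testing \eqref{e5:32} against $\tilde Z_0$, which is a constant multiple of $\Theta_0$. Using $\int_{\mathbb{S}^n}\tilde\phi\,\tilde Z_0\,d\tilde y\equiv 0$ and $\Delta_{\mathbb{S}^n}\tilde Z_0=0$, one gets
\begin{equation*}
c(t)\,\|\tilde Z_0\|_{L^2(\mathbb{S}^n)}^2=\int_{\mathbb{S}^n}\tilde h(\tilde y,t)\tilde Z_0(\tilde y)\,d\tilde y.
\end{equation*}
Since $a>-n$, the weight $(\pi-|\tilde y|)^a$ lies in $L^1(\mathbb{S}^n)$, so this yields $|c(t)|\lesssim t^{-\nu}\|\tilde h\|_{a,t_1}$. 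The kernel directions $m=1,\ldots,n+1$ are killed by the orthogonality hypothesis on $\tilde h$ together with zero initial data. On the orthogonal complement of $\mathrm{span}\{\Theta_0,\dots,\Theta_{n+1}\}$, the operator $-(\Delta_{\mathbb{S}^n}+n)$ has spectral gap $\lambda_{n+2}-n>0$, so a straightforward Duhamel/Galerkin construction produces a solution with $\|\tilde\phi(\cdot,t)\|_{L^2(\mathbb{S}^n)}\lesssim t^{-\nu}\|\tilde h\|_{a,t_1}$, and standard interior parabolic regularity upgrades this to local $L^\infty$ bounds away from the north pole.

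The real work is the weighted bound \eqref{e5:35}. I would construct a stationary supersolution: seek $p=p(\tilde y)$, smooth on $\mathbb{S}^n$ away from the north pole and orthogonal to $\Theta_0,\dots,\Theta_{n+1}$, satisfying
\begin{equation*}
-(\Delta_{\mathbb{S}^n}+n)p\;\geq\;(\pi-|\tilde y|)^{a}\quad\text{on }\mathbb{S}^n\setminus\{\text{north pole}\}.
\end{equation*}
The ansatz is motivated by the explicit local computation near the north pole, in geodesic polar coordinates $s=\pi-|\tilde y|$:
\begin{equation*}
\Delta_{\mathbb{S}^n}\,s^{a+2}=(a+2)(a+n)\,s^{a}+O(s^{a+1}),
\end{equation*}
and the coefficient $(a+2)(a+n)$ is strictly negative because $-n<a<-2$. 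Hence $-\Delta_{\mathbb{S}^n}s^{a+2}$ is positive and of exactly the required order $s^{a}$, while the zeroth-order term $-n\,s^{a+2}$ is of lower order. Projecting the right-hand side off $\Theta_0,\ldots,\Theta_{n+1}$ (the projection only subtracts an $L^\infty$ function, which is absorbed) and invoking the Fredholm alternative on the spectral complement, where $-(\Delta_{\mathbb{S}^n}+n)$ is coercive, produces such a $p$ with $p(\tilde y)\lesssim(\pi-|\tilde y|)^{a+2}$ globally.

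Finally, the function $\bar\phi(\tilde y,t)=C\,t^{-\nu}\,\|\tilde h\|_{a,t_1}\,p(\tilde y)$ is a supersolution of the equation in \eqref{e5:32} for $C$ large enough, since $\partial_t\bar\phi\leq 0$ dominates the dropped time-derivative term and $-(\Delta_{\mathbb{S}^n}+n)\bar\phi$ dominates $\tilde h-c(t)\tilde Z_0$. Because $\tilde\phi$ is constrained to live in the spectral subspace where $-(\Delta_{\mathbb{S}^n}+n)$ is coercive, the ordinary parabolic comparison principle applies to $\tilde\phi-\bar\phi$, yielding the estimate \eqref{e5:35}. The gradient bound follows from applying interior parabolic Schauder estimates on balls of radius comparable to $\pi-|\tilde y|$ and rescaling. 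The principal obstacle is precisely the presence of the growing $\Theta_0$ mode of $\Delta_{\mathbb{S}^n}+n$, which renders naive maximum-principle arguments invalid; the point is that the multiplier $c(t)$ and the imposed orthogonality are exactly what project the dynamics onto the coercive subspace, restoring the comparison principle.
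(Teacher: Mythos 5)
Your bookkeeping of the multiplier and of the kernel modes is fine and agrees with the paper: testing against $\tilde Z_0$ gives $c(t)$, and since $a>-n$ the weight is integrable, so $|c(t)|\lesssim t^{-\nu}\|\tilde h\|_{a,t_1}$; likewise the modes $j=1,\dots,n+1$ of $\tilde\phi$ stay zero. The proof collapses, however, at the central step, the stationary barrier. A positive function $p$ on all of $\mathbb{S}^n$ with $-(\Delta_{\mathbb{S}^n}+n)p\geq(\pi-|\tilde y|)^a$ cannot exist: pairing with the constant eigenfunction $\Theta_0>0$ and integrating by parts gives $\int_{\mathbb{S}^n}\bigl(-(\Delta_{\mathbb{S}^n}+n)p\bigr)\Theta_0\,d\tilde y=-n\int_{\mathbb{S}^n}p\,\Theta_0\,d\tilde y<0$, while the right-hand side would force this integral to be positive. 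Your proposed fix, solving the projected equation by Fredholm, does not repair this: the resulting $p$ must change sign (it is orthogonal to $\Theta_0$), so $\bar\phi=Ct^{-\nu}\|\tilde h\|_{a,t_1}p$ is not an upper barrier for $|\tilde\phi|$ at all, and the inequality $-(\Delta_{\mathbb{S}^n}+n)p\geq c's^a$ necessarily fails away from the north pole, where the subtracted bounded projection is of the same order as $s^a$; "absorbing" it is impossible by the same integral obstruction. Note also that the difficulty is not the comparison principle itself ($e^{t(\Delta_{\mathbb{S}^n}+n)}$ is positivity preserving), and that orthogonality of $\tilde\phi(\cdot,t)$ to the low modes is an $L^2$ constraint that cannot be injected into a pointwise comparison with a sign-changing barrier. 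This is precisely why the barrier of Lemma \ref{lemma:lineartheoryforouterproblem} works there (zeroth-order coefficient $-\frac14 n(n-2)$, a coercive operator) but does not transfer to $\Delta_{\mathbb{S}^n}+\lambda_1$. A secondary gap: your $L^2$/Duhamel bound requires $\tilde h(\cdot,t)\in L^2(\mathbb{S}^n)$, i.e. $s^a\in L^2$, which needs $a>-n/2$ and therefore fails for part of the admissible range $a\in(-n,-2)$ once $n\geq4$.

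For contrast, the paper obtains \eqref{e5:35} with a constant independent of $t_1$ by contradiction and blow-up: assuming $\|\tilde\phi_k\|_{a+2,t_1^k}=1$ while $\|\tilde h_k\|_{a,t_1^k}\to0$, it first shows vanishing on compact sets away from the north pole by passing to a limit problem on $\mathbb{S}^n\times(-\infty,0]$, which is killed not by a maximum principle but by energy identities in which the orthogonality to $\tilde Z_0,\dots,\tilde Z_{n+1}$ makes the quadratic form $B(\bar\phi,\bar\phi)=\int(|\nabla_{\mathbb{S}^n}\bar\phi|^2-\lambda_1|\bar\phi|^2)$ nonnegative; it then rules out concentration at the north pole by rescaling at the concentration points to the pure heat equation on $\mathbb{R}^n$ with the bound $|y-e|^{a+2}$, where a barrier argument is legitimate since $a+2\in(2-n,0)$. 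Barriers of the type you propose are used in the paper only locally near the pole (the function $K_1(\pi-|\tilde y|)^{2+a}$), and only to get the qualitative finiteness of $\|\tilde\phi\|_{a+2,t_1}$ for each fixed $t_1$, not the uniform estimate. To salvage your plan you would need either a genuinely different mechanism for the uniform-in-$t_1$ weighted bound (e.g. the compactness/energy route of the paper) or a weighted linear theory that never asks for a global positive supersolution of $-(\Delta_{\mathbb{S}^n}+n)$, which does not exist.
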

\begin{proof}
Observe that (\ref{e5:32}) is equivalent to the following problem
\begin{eqnarray}\label{e5:99}
\left\{
\begin{aligned}
&\partial_t\tilde\phi = \left(\Delta_{\mathbb{S}^n} + \lambda_1\right)\tilde\phi + \tilde h -c(t)\tilde Z_0(\tilde y),~ \tilde y\in \mathbb{S}^n,~ t\geq t_0,\\
&\tilde \phi(\tilde y,t_0) = 0,~ \tilde y\in \mathbb{S}^n
\end{aligned}
\right.
\end{eqnarray}
for $c(t)$ determined by the relation
\begin{equation*}
c(t) \int_{\mathbb{S}^n}|\tilde Z_0(\tilde y)|^2d\tilde y =  \int_{\mathbb{S}^n}\tilde h(\tilde y,t)\cdot \tilde Z_0(\tilde y)d\tilde y.
\end{equation*}
Then it is easy to check that
\begin{equation}\label{e5:102}
|c(t)|\lesssim t^{-\nu}\|\tilde h\|_{a,t_1}
\end{equation}
holds for $t\in (t_0,t_1)$. Therefore we only need to show (\ref{e5:35}) for solutions $\tilde \phi$ of (\ref{e5:99}). We use the blowing-up arguments in the spirit of \cite{Davila2019}.

First, given $t_1 > t_0$, we have $\|\tilde \phi\|_{a+2, t_1} < +\infty$. Indeed, from the standard parabolic theory on sphere, given $R_0 \in (0, \pi)$, there exists a positive constant $K = K(R_0,t_1)$ such that
\begin{equation*}
|\tilde \phi(\tilde y,t)|\leq K \text{ in }\tilde B_{R_0}(0)\times (t_0, t_1].
\end{equation*}
Here $\tilde B_{R_0}(0)$ is the geodesic ball centered at the south pole with geodesic radius $R_0$. For a fixed $R_0$ close to $\pi$ and sufficiently large $K_1$, $K_1(\pi - |\tilde y|)^{2+a}$ is a super-solution of (\ref{e5:99}) when $|\tilde y| > R_0$. Therefore $|\tilde \phi|\leq 2K_1(\pi - |\tilde y|)^{2+a}$ and $\|\tilde \phi\|_{a+2,t_1} < +\infty$ holds for any $t_1 > 0$. Secondly, from the definition of $c(t)$, the following identities hold,
\begin{equation}\label{e5:33}
\int_{\mathbb{S}^n}\tilde \phi(\tilde y, t)\cdot \tilde Z_j(\tilde y)d\tilde y = 0\text{ for all }t\in (t_0,t_1), j= 0,1,\cdots, n+1.
\end{equation}
Finally, for any $t_1 > 0$ sufficiently large, and any $\tilde \phi$ satisfying (\ref{e5:99}), (\ref{e5:33}) and $\|\tilde \phi\|_{a+2,t_1} < +\infty$, we claim that the estimate
\begin{equation}\label{e5:34}
\|\tilde \phi\|_{a+2,t_1}\lesssim \|\tilde h\|_{a,t_1}
\end{equation}
holds, which implies (\ref{e5:35}).

To prove (\ref{e5:34}), we use contradiction arguments. Suppose there exist sequences $t_1^k\to +\infty$ and $\tilde\phi_k$, $\tilde h_k$, $c_k$ satisfying the equation
\begin{equation*}\label{e5:36}
\left\{
\begin{aligned}
&\partial_t\tilde \phi_k = \Delta_{\mathbb{S}^n}\tilde\phi_k + \lambda_1\tilde\phi_k + \tilde h_k - c_k(t)\tilde Z_0(\tilde y),~ \tilde y\in \mathbb{S}^n,~ t\geq t_0,\\
&\int_{\mathbb{S}^n}\tilde \phi_k(\tilde y, t)\cdot \tilde Z_j(\tilde y)d\tilde y = 0\text{ for all }t\in (t_0,t_1^k), ~j= 0, 1,\cdots, n+1,\\
&\tilde \phi_k(\tilde y,t_0) = 0, ~\tilde y\in \mathbb{S}^n
\end{aligned}
\right.
\end{equation*}
and also there hold
\begin{equation}\label{e5:38}
\|\tilde \phi_k\|_{a+2,t_1^k}=1,\quad \|\tilde h_k\|_{a,t_1^k}\to 0.
\end{equation}
From (\ref{e5:102}), $\sup_{t\in (t_0, t_1^k)}t^\nu c_k(t)\to 0$.
First, we claim it holds that
\begin{equation}\label{e5:37}
\sup_{t_0 < t < t_1^k}t^\nu|\tilde\phi_k(\tilde y,t)|\to 0
\end{equation}
uniformly on compact subsets away from the north point on $\mathbb{S}^n$. Indeed, if there are some points on $\mathbb{S}^n$ satisfying $|\tilde y_k|\leq M < \pi$ and $t_0 < t_2^k < t_1^k$,
\begin{equation*}
(t_2^k)^\nu(\pi - |\tilde y_k|)^{-a-2}|\tilde\phi_k(\tilde y_k,t_2^k)|\geq \frac{1}{2},
\end{equation*}
then we have $t_2^k\to +\infty$. Now let us define
\begin{equation*}
\bar{\phi}_k(\tilde y,t) = (t_2^k)^\nu\tilde\phi_k(\tilde y,t_2^k + t).
\end{equation*}
Then $\bar\phi_k$ is a solution of
\begin{equation*}
\partial_t\bar{\phi}_k = \Delta_{\mathbb{S}^n}\bar{\phi}_k + \lambda_1\bar\phi_k  + \bar{h}_k - \bar{c}_k(t)\tilde Z_0(\tilde y)\text{ in }\mathbb{S}^n\times (t_0-t_2^k,0],
\end{equation*}
with $\bar{h}_k\to 0$, $\bar{c}_k\to 0$ uniformly on compact subsets of $\left(\mathbb{S}^n\setminus\{\text{north pole}\}\right)\times (-\infty, 0]$, moreover, it holds that
\begin{equation*}
|\bar{\phi}_k(\tilde y,t)|\leq (\pi - |\tilde y|)^{a+2}\text{ in }\mathbb{S}^n\times (t_0-t_2^k,0].
\end{equation*}
From the dominant convergence theorem, we have $\bar{\phi}_k\to\bar{\phi}$ uniformly on compact subsets of $\left(\mathbb{S}^n\setminus\{\text{north pole}\}\right)\times (-\infty, 0]$, $\bar{\phi}\neq 0$ and satisfies the following equation
\begin{equation}\label{e5:103}
\left\{
\begin{aligned}
&\partial_t\bar{\phi} = \Delta_{\mathbb{S}^n}\bar{\phi} + \lambda_1\bar{\phi}~~\text{ in }~~\mathbb{S}^n\times (-\infty, 0],\\
&\int_{\mathbb{S}^n}\bar{\phi}(\tilde y, t)\cdot \tilde Z_j(\tilde y)d\tilde y = 0\text{ for all }t\in (-\infty, 0], ~ j= 0, 1,\cdots, n+1,\\
&|\bar{\phi}(\tilde y,t)|\leq (\pi - |\tilde y|)^{a+2}~\text{ in }~\mathbb{S}^n\times (-\infty, 0],\\
&\bar{\phi}(\tilde y,t_0) = 0, ~\tilde y\in \mathbb{S}^n.
\end{aligned}
\right.
\end{equation}
Now we claim that $\bar{\phi} = 0$, from which we obtain a contradiction. Indeed, by standard parabolic regularity on the sphere, $\bar{\phi}(\tilde y,t)$ is smooth. From a scaling argument, we have
\begin{equation*}
(\pi - |\tilde y|)|\nabla_{\mathbb{S}^n}\bar{\phi}| + |\bar{\phi}_t| + |\Delta_{\mathbb{S}^n}\bar{\phi}|\lesssim (\pi - |\tilde y|)^{2+a}.
\end{equation*}
Then differentiating (\ref{e5:103}) gives $\partial_t\bar{\phi}_t = \Delta_{\mathbb{S}^n}\bar{\phi}_t + \lambda_1\bar{\phi}_t$ and
\begin{equation*}
(\pi - |\tilde y|)|\nabla_{\mathbb{S}^n}\bar{\phi}_t| + |\bar{\phi}_{tt}| + |\Delta_{\mathbb{S}^n}\bar{\phi}_t|\lesssim (\pi - |\tilde y|)^{2+a}.
\end{equation*}
Furthermore, we have
\begin{equation*}
\frac{1}{2}\partial_t\int_{\mathbb{S}^n}|\bar{\phi}_t|^2 + B(\bar{\phi}_t, \bar{\phi}_t) = 0,
\end{equation*}
where
\begin{equation*}
B(\bar{\phi}, \bar{\phi}) = \int_{\mathbb{S}^n}\left[|\nabla_{\mathbb{S}^n}\bar{\phi}|^2 - \lambda_1|\bar{\phi}|^2\right]d\tilde y.
\end{equation*}
Since $\int_{\mathbb{S}^n}\bar{\phi}(\tilde y, t)\cdot \tilde Z_j(\tilde y)d\tilde y = 0$ for all $t\in (-\infty, 0]$, $j= 0, 1,\cdots, n+1 $, $B(\bar{\phi}, \bar{\phi})\geq 0$. Also, it holds that
\begin{equation*}
\int_{\mathbb{S}^n}|\bar{\phi}_t|^2 = -\frac{1}{2}\partial_t B(\bar{\phi}, \bar{\phi}).
\end{equation*}
From these relations, we obtain
\begin{equation*}
\partial_t\int_{\mathbb{S}^n}|\bar{\phi}_t|^2 \leq 0,\quad \int_{-\infty}^0dt\int_{\mathbb{S}^n}|\bar{\phi}_t|^2 < +\infty.
\end{equation*}
Therefore $\bar{\phi}_t = 0$, thus $\bar{\phi}$ is independent of $t$ and $\Delta_{\mathbb{S}^n}\bar{\phi}+\lambda_1\bar{\phi} = 0$. Since $\bar{\phi}$ is bounded, by the non-degeneracy of the elliptic operator $\Delta_{\mathbb{S}^n}+\lambda_1$, $\bar{\phi}$ can be expressed as a linear combination of the functions $\tilde Z_j$ defined in (\ref{liftofkernelfunctions}), $j = 1,\cdots, n+1$. But $\int_{\mathbb{S}^n}\bar{\phi}\cdot \tilde Z_j = 0$, $j = 1,\cdots, n$, we get $\bar{\phi} = 0$, which a contradiction. Therefore (\ref{e5:37}) holds.

From (\ref{e5:38}) and (\ref{e5:37}), there exists a sequence $\tilde y_k$ with $\pi - |\tilde y_k|\to 0$ such that
\begin{equation*}
(t_2^k)^\nu(\pi - |\tilde y_k|)^{-a-2}|\tilde\phi_k(\tilde y_k, t_2^k)|\geq \frac{1}{2}.
\end{equation*}
Let us write $\tilde \phi_k$ as a function of $\theta_1,\cdots, \theta_n$, i.e., $\tilde\phi_k=\tilde\phi_k(\theta_1,\cdots,\theta_n)$ with $\theta_n$ being the geodesic distance to the south pole. Suppose $\tilde y_k = (\theta_1^k,\cdots,\theta_n^k)$, then $\theta_n^k\to \pi$ and
\begin{equation*}
(t_2^k)^\nu(\pi - \theta_n^k)^{-a-2}|\tilde\phi_k(\theta_1^k,\cdots,\theta_n^k, t_2^k)|\geq \frac{1}{2}.
\end{equation*}
Set
\begin{equation}\label{e:20200523}
\begin{aligned}
\hat{\phi}_k(\vartheta_1,\cdots,\vartheta_n, t)&:=(t_2^k)^\nu(\pi - \theta_n^k)^{-a-2}\times\\
&\quad\quad\tilde\phi_k\left(\theta_1^k+(\pi - \theta_n^k)\vartheta_1,\cdots, \theta_n^k + (\pi - \theta_n^k)\vartheta_n, (\pi - \theta_n^k)^{2}t + t_2^k\right),
\end{aligned}
\end{equation}
then
\begin{equation*}
\partial_t \hat{\phi}_k = \Delta_{\mathbb{S}^n}\hat{\phi}_k + a_k\hat{\phi}_k + \hat{h}_k(z,t),
\end{equation*}
where
\begin{equation*}
\begin{aligned}
\hat{h}_k(\vartheta_1,\cdots,\vartheta_n, t) &: = (t_2^k)^\nu(\pi - \theta_n^k)^{-a}\times\\
&\quad\quad\tilde h_k\left(\theta_1^k+(\pi - \theta_n^k)\vartheta_1,\cdots, \theta_n^k + (\pi - \theta_n^k)\vartheta_n, (\pi - \theta_n^k)^{2}t + t_2^k\right).
\end{aligned}
\end{equation*}
From the assumptions on $h_k$, there holds
\begin{equation*}
|\hat{h}_k(\vartheta_1,\cdots,\vartheta_n, t)| \lesssim o(1)|1-\vartheta_n|^{a}((t_2^k)^{-1}(\pi - \theta_n^k)^{2}t + 1)^{-\nu}.
\end{equation*}
Thus $\hat{h}_k(\vartheta_1,\cdots,\vartheta_n, t)\to 0$ uniformly on compact subsets of $\mathbb{S}^n\times (-\infty, 0]$ and the function $a_k$ satisfies the same property. Furthermore, $|\tilde{\phi}_k(0,\cdots, 0)|\geq \frac{1}{2}$ and
\begin{equation*}
|\hat{\phi}_k| \lesssim |1-\vartheta_n|^{a+2}((t_2^k)^{-1}(\pi - \theta_n^k)^{2}t + 1)^{-\nu}.
\end{equation*}
Note that $|1-\vartheta_n|$ is the geodesic distance between the point $(\vartheta_1,\cdots,\vartheta_n)$ and $(\theta_1^k,\cdots,\theta_{n-1}^k, 1)$, by passing to a subsequence, we may assume $(\theta_1^k,\cdots,\theta_{n-1}^k, 1)\to \hat e\in \mathbb{S}^n$, the geodesic distance from $\hat e$ to the south pole is $1$. Hence there exists a function $\hat\phi$ such that $\hat{\phi}_k\to \hat{\phi}\neq 0$ uniformly on compact subsets of $\mathbb{S}^n\times (-\infty,0]$, and $\hat\phi$ satisfies the following equation
\begin{equation}\label{e5:39}
\hat{\phi}_t = \Delta_{\mathbb{R}^n}\hat{\phi}\quad\text{in }\mathbb{R}^n\times (-\infty,0]
\end{equation}
and
\begin{equation}\label{e5:40}
|\hat{\phi}(y,t)|\leq |y-e|^{a+2}\quad\text{in }\mathbb{R}^n\times (-\infty,0].
\end{equation}
Here $e$ is the pre-image of $\hat e$ under the stereographic projection, i.e., $\hat e = \pi(e)$ . Any functions satisfying (\ref{e5:39}), (\ref{e5:40}) and $a+2\in (2-n, 0)$ must be zero, which is a contradiction, hence we have the validity of (\ref{e5:34}).

Indeed, without loss of generality, assume $e$ is the origin point. Define $u(\rho, t) = (\rho^2+t)^{\frac{a+2}{2}} + \frac{\varepsilon}{\rho^{n-2}}$, then $-u_t+\Delta u < [-(a + 2) + \frac{1}{2}-(n-1)](\rho^2+t)^{\frac{a}{2}} < 0$, therefore $u(|y|, t+M)$ is a super-solution of (\ref{e5:39})-(\ref{e5:40}) on $\mathbb{R}^n\times [-M, 0]$. Hence we have $|\hat \phi(y, t)|\leq u(|y|, t+M)$. Letting $M\to +\infty$, we get $|\hat \phi(y, t)|\leq \frac{\varepsilon}{|y|^{n-2}}$. Since $\varepsilon$ is arbitrary, we conclude that $\hat\phi(y, t) = 0$.
\end{proof}
\begin{remark}
In (\ref{e:20200523}), if we define $\hat{\phi}_k$ as
\begin{equation*}
\begin{aligned}
\hat{\phi}_k(\vartheta_1,\cdots,\vartheta_n, t)&:=(t_2^k)^\nu(\pi - \theta_n^k)^{-a-2}\times\\
&\quad\tilde\phi_k\left(\theta_1^k+\vartheta_1,\cdots, \theta_{n-1}^k+\vartheta_{n-1}, \theta_n^k + (\pi - \theta_n^k)\vartheta_n, (\pi - \theta_n^k)^{2}t + t_2^k\right),
\end{aligned}
\end{equation*}
then the limit equation is
\begin{equation*}\label{e5:3900}
\hat{\phi}_t = \Delta_{\mathbb{S}^n}\hat{\phi}\quad\text{in }\mathbb{S}^n\times (-\infty,0]
\end{equation*}
and
\begin{equation*}\label{e5:4000}
|\hat{\phi}(y,t)|\leq |y-\hat e|^{a+2}\quad\text{in }\mathbb{S}^n\times (-\infty,0].
\end{equation*}
Under the assumption $a+2\in (2-n, 0)$ and similar arguments as above, one has $\hat \phi = 0$, which is also a contradiction.
\end{remark}

{\it Proof of Proposition \ref{proposition5.2000}}.
First, we consider the problem
\begin{equation*}
\left\{
\begin{aligned}
&\partial_t\tilde\phi = \left(\Delta_{\mathbb{S}^n} + \lambda_1\right)\tilde\phi + \tilde h -c(t)\tilde Z_0(\tilde y),~ \tilde y\in \mathbb{S}^n,~ t\geq t_0,\\
&\tilde \phi(\tilde y,t_0) = 0,~ \tilde y\in \mathbb{S}^n.
\end{aligned}
\right.
\end{equation*}
Let $(\tilde \phi(\tilde y,t),c(t))$ be the unique solution of the initial value problem (\ref{e5:32}), then by Lemma \ref{l5:3}, for any $t_1 > t_0$, there hold
\begin{equation*}
|\tilde\phi(\tilde y,t)|\lesssim t^{-\nu}(\pi-|\tilde y|)^{2+a}\|\tilde h\|_{a, t_1}, \text{ for all }t\in (t_0, t_1), \,\,\tilde y\in \mathbb{S}^n
\end{equation*}
and
\begin{equation*}
|c(t)|\leq t^{-\nu}\|\tilde h\|_{a,t_1}\text{ for all }t\in (t_0,t_1).
\end{equation*}
By assumption, we have $\|\tilde h\|_{a,\nu} < +\infty$ and $\|\tilde h\|_{a, t_1}\leq \|\tilde h\|_{a,\nu}$ for an arbitrary $t_1$. Therefore,
\begin{equation*}
|\tilde \phi(\tilde y,t)|\lesssim t^{-\nu}(\pi-|\tilde y|)^{2+a}\|\tilde h\|_{a,\nu}\text{ for all }t\in (t_0, t_1),\,\, \tilde y\in \mathbb{S}^n
\end{equation*}
and
\begin{equation*}
|c(t)|\leq t^{-\nu}\|\tilde h\|_{a,\nu}\text{ for all }t\in (t_0, t_1).
\end{equation*}
Since $t_1$ is arbitrary, we have
\begin{equation*}
|\tilde \phi(\tilde y,t)|\lesssim t^{-\nu}(\pi-|\tilde y|)^{2+a}\|\tilde h\|_{a,\nu}\text{ for all }t\in (t_0, +\infty),\,\, \tilde y\in \mathbb{S}^n
\end{equation*}
and
\begin{equation*}
|c(t)|\leq t^{-\nu}\|\tilde h\|_{a,\nu}\text{ for all }t\in (t_0, +\infty).
\end{equation*}
\qed

Using the stereographic projection, Proposition \ref{proposition5.2000} is equivalent to the following result.
\begin{prop}\label{proposition5.2000111}
Suppose $a \in (-n, -2)$, $\nu > 0$, $\|U^{p-1}h\|_{n+2+a,\nu} < +\infty$ and
\begin{equation}\label{orthogonalitycondition}
\int_{B_{2R}(0)}h(y,t)Z_j(y)U^{p-1}(y)d y = 0~\text{ for all }~t\in (t_0,\infty), ~j = 1,\cdots, n+1.
\end{equation}
Then, for sufficiently large $R$, there exists $\phi = \phi[h](y, t)$ satisfying (\ref{e:mainsection3}) and
\begin{equation*}
\begin{aligned}
&(1+|y|)^{-2}|\partial_t\phi(y, t)|+(1+|y|)^{-1}|\nabla \phi(y, t)| + |\phi(y,t)|\lesssim \frac{t^{-\nu}}{1+|y|^{n+a}}\|U^{p-1}h\|_{n+2+a,\nu}.
\end{aligned}
\end{equation*}
Furthermore, there exists a function $e_0 = e_0[h](t)$ such that
$\phi(\cdot, t_0) = e_0[h](t_0)Z_0(y)$ and $|e_0[h]|\lesssim \|U^{p-1}h\|_{n+2+a,\nu}$ hold.
\end{prop}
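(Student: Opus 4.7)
The plan is to reduce Proposition~\ref{proposition5.2000111} to its spherical counterpart Proposition~\ref{proposition5.2000} via the conformal lifting (\ref{e:lifting}). Setting $\tilde\phi(\tilde y,t) = \phi(y,t)/\omega(y)$ and $\tilde h(\tilde y,t) = h(y,t)/\omega(y)$ with $\omega(y) = (2/(1+|y|^2))^{(n-2)/2}$, so that $U = c_n\omega$ for $c_n = \alpha_n/2^{(n-2)/2}$, a direct computation using the conformal Laplacian identity $\omega^{p}(P\tilde\phi)\circ\pi = \Delta_{\mathbb R^n}\phi$ together with the algebraic identity $pc_n^{p-1} = n(n+2)/4$ transforms (\ref{e:mainsection3}) into the standard linear heat problem (\ref{e:linearproblemonsphere}) on $\mathbb S^n$, up to a universal constant that is absorbed by a harmless time rescaling.

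Next, I would verify the hypotheses of Proposition~\ref{proposition5.2000} for $\tilde h$. Since $Z_j(y) = \tilde Z_j(\pi(y))\omega(y)$ and the spherical volume element pulls back as $d\tilde y = \omega^{p+1}\,dy$, the orthogonality condition (\ref{orthogonalitycondition}) translates (up to the constant $c_n^{p-1}$) into $\int_{\mathbb S^n}\tilde h\,\tilde Z_j\,d\tilde y = 0$ for $j = 1,\ldots,n+1$, where we use that $\tilde h$ is supported in $\pi(B_{2R})$ by the support hypothesis on $h$. Using the asymptotics $\pi-|\tilde y|\sim 2/|y|$ as $|y|\to\infty$ and $\omega(y)\sim (2|y|^{-2})^{(n-2)/2}$, one checks that $|\tilde h|\lesssim t^{-\nu}(\pi-|\tilde y|)^{a}$ is equivalent to $|U^{p-1}h|\lesssim t^{-\nu}(1+|y|)^{-(n+2+a)}$, so that $\|\tilde h\|_{a,\nu}\simeq \|U^{p-1}h\|_{n+2+a,\nu}$. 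Proposition~\ref{proposition5.2000} then yields $\tilde\phi$ with $|\tilde\phi|+(\pi-|\tilde y|)|\nabla\tilde\phi|\lesssim t^{-\nu}(\pi-|\tilde y|)^{2+a}\|\tilde h\|_{a,\nu}$; pulling back by $\phi=\tilde\phi\omega$ produces the stated bounds on $\phi$ and $\nabla\phi$ on $\mathbb R^n$ after converting the spherical weights back into $(1+|y|)^{-(n+a)}$. The $\partial_t\phi$ bound is recovered by solving for $\partial_t\phi$ directly from equation (\ref{e:mainsection3}) and invoking interior parabolic regularity on balls of unit size in $y$.

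The remaining subtlety is the $Z_0$ mode. Because the hypothesis supplies orthogonality only against $Z_1,\ldots,Z_{n+1}$, the solution produced via Lemma~\ref{l5:3} on the sphere carries a Lagrange source $-c(t)\tilde Z_0$ with $|c(t)|\lesssim t^{-\nu}\|\tilde h\|_{a,\nu}$. Since $\tilde Z_0$ is a constant on $\mathbb S^n$, the correction $\tilde\phi\mapsto \tilde\phi+F(t)\tilde Z_0$ reduces the elimination of this term to the scalar linear ODE $F'(t)=\lambda_1 F(t)-c(t)$, which can be integrated backwards from $t=+\infty$ using the polynomial decay of $c$ to yield a bounded $F$ with $|F(t)|\lesssim \|U^{p-1}h\|_{n+2+a,\nu}$. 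Pulling the corrected $\tilde\phi+F(t)\tilde Z_0$ back to $\mathbb R^n$ shifts the trivial sphere initial value into a time-$t_0$ multiple of $\omega$, equivalently a multiple of $Z_0=U$, yielding exactly $\phi(\cdot,t_0)=e_0[h](t_0)Z_0$ with the claimed bound on $e_0$. The principal obstacle is precisely this handling of the $Z_0$ mode: because $\lambda_0=-(p-1)<0$ is a genuine negative eigenvalue of $L_0$, the zero-initial-data normalization natural on the sphere is incompatible with (\ref{e:mainsection3}) on $\mathbb R^n$, forcing a controlled nonzero projection onto $Z_0$ at the initial time, while every other step is systematic bookkeeping in the conformal change of variables.
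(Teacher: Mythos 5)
Your proposal follows essentially the same route as the paper: the paper proves this proposition by simply asserting that, via the stereographic lifting (\ref{e:lifting}), it is equivalent to Proposition \ref{proposition5.2000} (itself resting on Lemma \ref{l5:3}), and your reduction—transfer of the equation to $(\Delta_{\mathbb S^n}+\lambda_1)$, of the orthogonality conditions, and of the weighted norms, followed by removal of the Lagrange term $c(t)\tilde Z_0$ through a scalar ODE integrated backward from $t=+\infty$, which produces precisely the initial datum $e_0[h](t_0)Z_0$—is exactly the argument the paper leaves implicit. The only blemishes are cosmetic: the correction ODE should read $F'=\lambda_1F+c(t)$ rather than $F'=\lambda_1F-c(t)$ (harmless, since only the backward integration and the bound $|F|\lesssim t^{-\nu}$ matter), and the weighted $\partial_t\phi$ bound is cleaner via scaled parabolic estimates on balls of radius comparable to $|y|$ (equivalently $\pi-|\tilde y|$ on the sphere) than on unit balls.
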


\subsection{Choice of the parameter functions}
To apply Proposition \ref{proposition5.2000111} to the inner problem (\ref{e:innerproblemselfsimilar}), the right hand term
\begin{equation*}
\begin{aligned}
H_j[\lambda, \xi, \dot\lambda, \dot\xi, \phi](y, t) & := p\mu_{0j}^{\frac{n-2}{2}}\frac{\mu_{0j}^2}{\mu^2_j}U^{p-1}\left(\frac{\mu_{0j}}{\mu_j}y\right)\psi(\xi_j + \mu_{0j} y, t)\\
&\quad + \mu_{0j}^{\frac{n+2}{2}}S^{*,in}_{\mu, \xi, j}(\xi_j + \mu_{0j} y, t) + B^1[\phi_j] + B^2[\phi_j] + B^3[\phi_j]
\end{aligned}
\end{equation*}
should satisfy the orthogonality conditions (\ref{orthogonalitycondition}), that is to say, we need the following identities
\begin{equation}\label{e:orthogonalitycondition1}
\int_{B_{2R}}H_j[\lambda, \xi, \dot\lambda, \dot\xi, \phi](y, t)Z_l(y)dy = 0\text{ for }l = 1, \cdots, n+1, ~~j= 1, 2, \cdots, k.
\end{equation}
These identities can be achieved by solving a system of ODEs for the parameter functions $\lambda_j$, $\xi_j$, $j=1,\cdots, k$.
\begin{lemma}\label{l5:1}
When $l = n+1$, identities (\ref{e:orthogonalitycondition1}) are equivalent to the following system of ODEs,
\begin{equation}\label{e5:11}
\dot{\lambda}_j + \frac{1}{t}\left(P^Tdiag\left(\frac{\bar\sigma + 2}{n-2}\right)P\lambda\right)_j = \Pi_{1, j}[\lambda, \xi, \dot{\lambda}, \dot{\xi}, \phi](t)
\end{equation}
where $\bar{\sigma}$ is a positive number and the right hand side term $\Pi_{1, j}[\lambda, \xi, \dot{\lambda}, \dot{\xi}, \phi](t)$ can be expressed as
\begin{equation}\label{expressionpi1}
\begin{aligned}
\Pi_{1, j}[\lambda, \xi, \dot{\lambda}, \dot{\xi}, \phi](t) =& t_0^{-\varepsilon}\mu_0^{n-1 + \sigma}(t)f_j(t)\\
& + t_0^{-\varepsilon}\Theta_{1, j}\left[\dot{\lambda},\dot{\xi}, \mu_0^{n-2}(t)\lambda, \mu_0^{n-2}(\xi-q), \mu_0^{n-1+\sigma}\phi\right](t).
\end{aligned}
\end{equation}
Here $f_j(t)$ and $\Theta_{1, j}\left[\dot{\lambda},\dot{\xi}, \mu_0^{n-2}(t)\lambda, \mu_0^{n-2}(\xi-q), \mu_0^{n-1+\sigma}\phi\right](t)$ are smooth bounded functions of $t$. Furthermore, the following Lipschitz properties hold,
\begin{equation*}
\left|\Theta_{1, j}[\dot{\lambda}_1](t) - \Theta_{1, j}[\dot{\lambda}_2](t)\right|\lesssim t_0^{-\varepsilon}|\dot{\lambda}_1(t) - \dot{\lambda}_2(t)|
\end{equation*}
\begin{equation*}
\left|\Theta_{1, j}[\dot{\xi}_1](t) - \Theta_{1, j}[\dot{\xi}_2](t)\right|\lesssim t_0^{-\varepsilon}|\dot{\xi}_1(t) - \dot{\xi}_2(t)|,
\end{equation*}
\begin{equation*}
\left|\Theta_{1, j}[\mu_0^{n-2}\lambda_1](t) - \Theta_{1, j}[\mu_0^{n-2}\lambda_2](t)\right|\lesssim t_0^{-\varepsilon}|\dot{\lambda}_1(t) - \dot{\lambda}_2(t)|
\end{equation*}
\begin{equation*}
\left|\Theta_{1, j}[\mu_0^{n-2}(\xi_1-q)](t) - \Theta_{1, j}[\mu_0^{n-2}(\xi_2-q)](t)\right|\lesssim t_0^{-\varepsilon}|\xi_1(t) - \xi_2(t)|,
\end{equation*}
\begin{equation*}\label{e5:104}
\left|\Theta_{1, j}[\mu_0^{n-1+\sigma}\phi_1](t) - \Theta_{1, j}[\mu_0^{n-1+\sigma}\phi_2](t)\right|\lesssim t_0^{-\varepsilon}\|\phi_1(t) - \phi_2(t)\|_{n-2+\sigma, n+a}.
\end{equation*}
\end{lemma}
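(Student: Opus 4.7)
The plan is to insert the explicit expression of $H_j$ into the orthogonality relation $\int_{B_{2R}}H_j(y,t)Z_{n+1}(y)\,dy = 0$, isolate the terms that are linear in $\dot\lambda_j$ and in $\lambda$, and move everything else into $\Pi_{1,j}$. I would first decompose $H_j = \mu_{0j}^{(n+2)/2} S^{*,in}_{\mu,\xi,j}(\xi_j+\mu_{0j}y,t) + B^1[\phi_j] + B^2[\phi_j] + B^3[\phi_j] + p\mu_{0j}^{(n-2)/2}(\mu_{0j}/\mu_j)^2 U^{p-1}((\mu_{0j}/\mu_j)y)\psi(\xi_j+\mu_{0j}y,t)$, and invoke the explicit structure of $S^{*,in}_{\mu,\xi,j}$ recorded in Lemma \ref{l2.20000}. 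Among those summands only the three carrying the factor $pU^{p-1}(y)Z_{n+1}(y)$ contribute at leading order; all the other summands carry odd-in-$y$ gradient factors ($\nabla U(y)$, $\nabla H(q_j,q_j)\cdot y$, $\nabla G(q_j,q_i)\cdot y$), hence integrate to zero against the radial $Z_{n+1}$.

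Next I would normalize using the constants $c_1 = -p\int U^{p-1}Z_{n+1}$ and $c_2 = \int U^{p-1}Z_{n+1}^2$ together with the identity $c_2 c_1^{-1}\mu_0^{1-n}\dot\mu_0 = -\tfrac{2}{n-2}$ established in Section 2. Multiplying the projected relation through by $\mu_{0j}c_2^{-1}$ and using $\mu_0^{n-2} \propto t^{-1}$, the linear principal part takes the form $\dot\lambda_j + \tfrac{1}{t}(\mathcal A\lambda)_j$, where $\mathcal A$ is a symmetric matrix equal (up to an explicit scalar) to the Hessian of the functional $\tilde I$ from Section 2 at its critical point $\Lambda_j = b_j^{(n-2)/2}$, plus a $\tfrac{2}{n-2}$-multiple of the identity coming from the $\dot\mu_0\lambda_j$ contribution. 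Orthogonally diagonalizing $\mathcal A = P^T\operatorname{diag}\bigl(\tfrac{\bar\sigma+2}{n-2}\bigr)P$ yields exactly the LHS of (\ref{e5:11}), and the positive definiteness assumption on $\mathcal G(q)$ forces all components of $\bar\sigma$ to be positive.

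The remainder $\Pi_{1,j}$ then collects: (i) the integrals of $B^1[\phi_j],B^2[\phi_j],B^3[\phi_j]$ against $Z_{n+1}$, each of size $O(t_0^{-\varepsilon}\mu_0^{n-1+\sigma})$ by the weighted a priori bound (\ref{e4:25}); (ii) the $\psi$-contribution, controlled by Proposition \ref{propositionouterproblem} and Remark \ref{propositionouterproblem1}; (iii) the genuinely quadratic-in-$\lambda$ pieces obtained by expanding $\mathcal M_{ij}$ and the other $b$-dependent coefficients around $b$ rather than $b+\lambda$; (iv) the $\xi_j-q_j$ contributions arising from Taylor-expanding the gradient terms which were killed by parity only at $\xi_j=q_j$; and (v) the tail integrals over $|y|>R$, which are exponentially small in $R=t_0^{\varepsilon}$. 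Each piece factorizes as $t_0^{-\varepsilon}\mu_0^{n-1+\sigma}(t)$ times a smooth bounded function of $(\dot\lambda,\dot\xi,\mu_0^{n-2}\lambda,\mu_0^{n-2}(\xi-q),\mu_0^{n-1+\sigma}\phi)$, producing the precise structure (\ref{expressionpi1}). The Lipschitz estimates follow by linearizing each contribution in the relevant argument and invoking Remark \ref{propositionouterproblem1} once more for the parametric dependence of $\psi$.

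The main obstacle is not any single estimate but the bookkeeping: one has to keep straight the three closely related scales $\mu_j$, $\mu_{0j}$ and $\mu_0$, correctly handle the implicit rescaling between $(x-\xi_j)/\mu_j$ used in $S^{*,in}_{\mu,\xi,j}$ and $(x-\xi_j)/\mu_{0j}$ used in the inner variable, and verify that the symmetric matrix left over after the $\mathcal M_{ij}$ projection is indeed (up to scaling) the Hessian of $\tilde I$ at its critical point, so that the positive definiteness of $\mathcal G(q)$ translates into strictly positive eigenvalues $\bar\sigma_j$. This structural identification is what makes (\ref{e5:11}) a dissipative linear ODE system at rate $1/t$, and it is the ingredient on which the subsequent fixed-point construction of the parameter functions will rely.
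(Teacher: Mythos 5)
Your proposal follows essentially the same route as the paper: project $H_j$ against $Z_{n+1}$, extract the linear part $c_2\dot\lambda_j - 2c_2\mu_0^{-1}\dot\mu_0\lambda_j + c_1\mu_0^{n-2}\sum_i b_j^2\mathcal M_{ij}\lambda_i$ using the constants $c_1,c_2$ and the choice of $\mu_0$, kill the gradient terms by parity, diagonalize the $\mathcal M_{ij}$-matrix (whose positivity, as you note, comes from the positive definiteness of $\mathcal G(q)$) to get $P^T\mathrm{diag}\bigl(\frac{\bar\sigma+2}{n-2}\bigr)P$, and treat the $\psi$-, $B^i$- and difference terms via Proposition \ref{propositionouterproblem} and Remark \ref{propositionouterproblem1}, exactly as in the paper's computation with $S_1,S_2,S_3$ and $I_1,I_2,I_3$. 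One minor correction: the truncation errors at $|y|=2R$ are only polynomially small, $O(R^{2-n})+O(R^{-2})$, not exponentially small, but since $R=t_0^{\varepsilon}$ this still supplies the required $t_0^{-\varepsilon}$ factors, so the argument is unaffected.
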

\begin{proof}
For a fixed $j \in \{1,\cdots,k\}$, let us compute the term
\begin{eqnarray*}
\int_{B_{2R}}H_j[\lambda,\xi,\dot{\lambda},\dot{\xi}, \phi](y,t)Z_{n+1}(y)dy.
\end{eqnarray*}
First, we consider the term
\begin{equation*}
\begin{aligned}
&\mu_{0j}^{\frac{n+2}{2}}S^{*,in}_{\mu, \xi, j}(\xi_j + \mu_{0j}y, t)\\
&=\left(\frac{\mu_{0j}}{\mu_j}\right)^{\frac{n+2}{2}}\left[\mu_{0j}^{-1}S_1(z, t) + \lambda_jb_j^{-1}\mu_0^{-2}S_2(z, t) + \mu_j^{-2}S_3(z, t)\right]_{z = \xi_j + \mu_jy}\\
&\quad+\left(\frac{\mu_{0j}}{\mu_j}\right)^{\frac{n+2}{2}}\mu_{0j}\mu_0^{-2}\left[S_1(\xi_j+\mu_{0j}y, t)-S_1(\xi_j+\mu_{j}y, t)\right]\\
&\quad+\left(\frac{\mu_{0j}}{\mu_j}\right)^{\frac{n+2}{2}}\lambda_jb_j^{-1}\mu_0^{-2}\left[S_2(\xi_j+\mu_{0j}y, t)-S_2(\xi_j+\mu_{j}y, t)\right]\\
&\quad+\left(\frac{\mu_{0j}}{\mu_j}\right)^{\frac{n+2}{2}}\mu_j^{-2}\left[S_3(\xi_j+\mu_{0j}y, t)-S_3(\xi_j+\mu_{j}y, t)\right],
\end{aligned}
\end{equation*}
where
\begin{equation*}
\begin{aligned}
S_1(z) &= \dot{\lambda}_jpU(\frac{z-\xi_j}{\mu_j})^{p-1}Z_{n+1}\left(\frac{z-\xi_j}{\mu_j}\right)\\
&\quad -2\mu_{0}^{-1}\dot\mu_0\lambda_jpU\left(\frac{z-\xi_j}{\mu_j}\right)^{p-1}Z_{n+1}\left(\frac{z-\xi_j}{\mu_j}\right)\\
&\quad -\mu_0^{n-2}pU(\frac{z-\xi_j}{\mu_j})^{p-1}\sum_{i=1}^kb_j^2\mathcal M_{ij}\lambda_i,
\end{aligned}
\end{equation*}
\begin{equation*}
\begin{aligned}
S_2(z) =& \dot{\mu}_0Z_{n+1}\left(\frac{z-\xi_j}{\mu_j}\right)pU\left(\frac{z-\xi_j}{\mu_j}\right)^{p-1}\\
&+pU\left(\frac{z-\xi_j}{\mu_j}\right)^{p-1}\mu_0^{n-1}\bigg(-b_j^{n-2}H(q_j, q_j)+\sum_{i\neq j}b_j^{\frac{n-2}{2}}b_i^{\frac{n-2}{2}}G(q_j, q_i)\bigg)
\end{aligned}
\end{equation*}
and
\begin{equation*}
\begin{aligned}
S_3(z) =& \dot{\xi}_j\cdot \nabla U\left(\frac{z-\xi_j}{\mu_j}\right)+ \mu_j^3pU\left(\frac{z-\xi_j}{\mu_j}\right)^{p-1}\\
&\times\left(-\mu_j^{n-2}\nabla H(q_j, q_j) + \sum_{i\neq j}\mu_j^{\frac{n-2}{2}}\mu_i^{\frac{n-2}{2}}\nabla G(q_j, q_i)\right)\cdot \left(\frac{z-\xi_j}{\mu_j}\right).
\end{aligned}
\end{equation*}
By direct computations, we have
\begin{equation*}
\begin{aligned}
\int_{B_{2R}}& S_1(\xi_j+\mu_jy)Z_{n+1}(y)dy = c_2(1+O(R^{2-n}))\dot{\lambda}_j \\
&\quad  -2c_2(1+O(R^{-2}))\mu_{0}^{-1}\dot\mu_0\lambda_j + c_1(1+O(R^{-2}))\mu_0^{n-2}\sum_{i=1}^kb_j^2\mathcal M_{ij}\lambda_i,
\end{aligned}
\end{equation*}
\begin{equation*}
\begin{aligned}
&\int_{B_{2R}}S_2(\xi_j+\mu_jy)Z_{n+1}(y)dy = O(R^{2-n}+R^{-2})\mu_{0}^{n-1}
\end{aligned}
\end{equation*}
and
\begin{equation*}
\int_{B_{2R}}S_3(\xi_j+\mu_jy)Z_{n+1}(y)dy = 0 \,\,(\text{by symmetry}).
\end{equation*}
Since $\frac{\mu_{0j}}{\mu_j} = (1 + \frac{\lambda_j}{\mu_{0j}})^{-1}$, for any $l = 1, 2, 3$, there holds
\begin{equation*}
\begin{aligned}
&\int_{B_{2R}}[S_l(\xi_j+\mu_{0j}y, t)-S_l(\xi_j+\mu_{j}y, t)]Z_{n+1}(y)dy \\
&= g(t,\frac{\lambda_j}{\mu_0})\dot{\lambda}_j + g(t, \frac{\lambda_j}{\mu_0})\dot{\xi} +g(t,\frac{\lambda_j}{\mu_0})\sum_i\mu_0^{n-2}\lambda_i + \mu_0^{n-1+\sigma}f(t),
\end{aligned}
\end{equation*}
where $f$, $g$ are smooth bounded functions satisfying $f(\cdot, s)\sim s$, $g(\cdot, s)\sim s$ as $s\to 0$. Therefore we have
\begin{equation*}
\begin{aligned}
& c\left(\frac{\mu_{j}}{\mu_{0j}}\right)^{\frac{n+2}{2}}\mu_{0j}\int_{B_{2R}}\mu_{0j}^{\frac{n+2}{2}}S_{\mu, \xi, j}(\xi_j + \mu_{0j}y, t)Z_{n+1}(y)dy\\
& = \left[\dot{\lambda}_j + \frac{1}{t}\left(P^Tdiag\left(\frac{\bar\sigma_j + 2}{n-2}\right)P\lambda\right)_j\right] + t_0^{-\varepsilon}g(t,\frac{\lambda_j}{\mu_0})(\dot{\lambda} + \dot{\xi}) + t_0^{-\varepsilon}\mu_0^{n-2}g(t, \frac{\lambda_j}{\mu_0}),
\end{aligned}
\end{equation*}
for a positive number $c$. Here we have used the fact that, since $\mathcal G(q)$ is positive definite, the matrix with elements $\frac{1}{2}b_j^2\mathcal M_{ij}$ can be diagonalized as $\frac{1}{n-2}P^T\left(\bar{\sigma}_1,\cdots, \bar{\sigma}_k\right)P$ with $\bar{\sigma}_i > 0$ for $i=1, \cdots, k$ and $P$ is a $k\times k$ matrix.

Next we compute the term $$p\mu_{0j}^{\frac{n-2}{2}}(1 + \frac{\lambda_j}{\mu_{0j}})^{-2}\int_{B_{2R}}U^{p-1}(\frac{\mu_{0j}}{\mu_j}y)\psi(\xi_j + \mu_{0j}y, t)Z_{n+1}(y)dy,$$
the principal part is $I: = \int_{B_{2R}}U^{p-1}(y)\psi(\xi_j + \mu_{0j}y, t)Z_{n+1}(y)dy$. Decompose $I$ as
\begin{equation*}
\begin{aligned}
I &=\psi[0,q,0,0,0](q_j, t)\int_{B_{2R}}U^{p-1}(y)Z_{n+1}(y)dy\\
&\quad + \int_{B_{2R}}U^{p-1}(y)Z_{n+1}(y)(\psi[0,q,0,0,0](\xi_j+\mu_{0j}y, t)-\psi[0,q,0,0,0](q_j, t))dy\\
&\quad + \int_{B_{2R}}U^{p-1}(y)Z_{n+1}(y)(\psi[\lambda,\xi,\dot{\lambda},\dot{\xi}, \phi] - \psi[0,q,0,0,0])(\xi_j+\mu_{0j}y, t)dy\\
& = I_1 + I_2 + I_3.
\end{aligned}
\end{equation*}
By Proposition \ref{propositionouterproblem}, $I_1 = t_0^{-\varepsilon}\mu_0^{\frac{n-2}{2}+\sigma}f(t)$, $f$ is a smooth bounded function. Similarly, $I_2 = t_0^{-\varepsilon}\mu_0^{\frac{n-2}{2}+\sigma}g(t, \frac{\lambda}{\mu_0}, \xi -q)$ for a bounded function $g$ such that $g(\cdot, s, \cdot)\sim s$ and $g(\cdot,\cdot, s)\sim s$ as $s \to 0$. From Remark \ref{propositionouterproblem1} and mean value theorem, $I_3$ is the sum of terms like
\begin{equation*}
\mu_0^{-\frac{n}{2}+\sigma}t_0^{-\varepsilon}f(t)(\dot{\lambda} + \dot{\xi})F[\lambda, \xi, \dot{\lambda}, \dot{\xi}, \phi](t)
\end{equation*}
and
$$
\mu_0^{\frac{n-4}{2}}t_0^{-\varepsilon}f(t)(\lambda + \xi)F[\lambda, \xi, \dot{\lambda}, \dot{\xi}, \phi](t),
$$
where the function $f$ is smooth bounded, $F$ is a nonlocal operator with $F[0,q,0,0,0](t)$ bounded.

Finally, there hold
\begin{equation*}
\int_{B_{2R}}B^i[\phi_j](y, t)Z_{n+1}(y)dy = t_0^{-\varepsilon}[\mu_0^{n-1+\sigma}(t)g^i[\phi](t) + \dot{\xi}_j\ell^i[\phi](t)]
\end{equation*}
for functions $g^i(s)$ satisfying $g^i(s)\sim s$ as $s\to 0$, and $\ell^i[\phi](t)$ is smooth bounded in $t$.
Combining all the estimates above, we conclude the result.
\end{proof}
\noindent Similarly, for the identities
\begin{equation*}
\int_{B_{2R}}H_j[\lambda,\xi, \dot{\lambda}, \dot{\xi},\phi](y, t(t))Z_l(y)dy,
\end{equation*}
for any $j = 1, \cdots, k$, $l = 1,\cdots, n$, we have
\begin{lemma}\label{l5:2}
For $j = 1, \cdots, k$, $l = 1,\cdots, n$, (\ref{e:orthogonalitycondition1}) are equivalent to the following system of ODEs
\begin{equation*}\label{e5:18}
\dot{\xi}_j = \Pi_{2, j}[\lambda, \xi, \dot{\lambda}, \dot{\xi}, \phi](t),
\end{equation*}
\begin{equation*}
\begin{aligned}
&\Pi_{2, j}[\lambda, \xi, \dot{\lambda}, \dot{\xi}, \phi](t)\\
&= \mu_0^{n}c\left[b_j^{n-2}\nabla H(q_j, q_j) - \sum_{i\neq j}b_j^{\frac{n-2}{2}}b_i^{\frac{n-2}{2}}\nabla G(q_j, q_i)\right]+ \mu_0^{n+\sigma}(t)f_j(t)\\
&\quad + t_0^{-\varepsilon}\Theta_{2, j}[\dot{\lambda},\dot{\xi}, \mu_0^{n-2}(t)\lambda, \mu_0^{n-1}(\xi-q), \mu_0^{n-1+\sigma}\phi](t),
\end{aligned}
\end{equation*}
where $c = \frac{p\int_{\mathbb{R}^n}U^{p-1}\frac{\partial U}{\partial y_1}y_1dy}{\int_{\mathbb{R}^n}\left(\frac{\partial U}{\partial y_1}\right)^2dy}$, $f_j(t)$ are smooth bounded $n$ dimensional vector functions for $t\in [t_0, \infty)$, the $n$ dimensional vector functions $\Theta_{2, j}$ has the same properties as in Lemma \ref{l5:1}.
\end{lemma}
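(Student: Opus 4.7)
The plan is to mimic the computation carried out in the proof of Lemma \ref{l5:1}, but now testing against the kernel elements $Z_l = \partial U/\partial y_l$, $l=1,\dots,n$, rather than against $Z_{n+1}$. Since $Z_l$ is odd in the $y_l$ direction (and even in the others), the symmetry properties of the integrand will pick out different leading terms than in the previous lemma: whereas testing against $Z_{n+1}$ isolated the $\dot{\lambda}_j$ and $\mu_j^{n-2}H(q_j,q_j)$ contributions, testing against $Z_l$ will isolate the $\dot{\xi}_j$ contribution (through $\int_{\mathbb{R}^n}|\partial_{y_l}U|^2\,dy$) and the spatial gradients $\nabla H(q_j,q_j)$, $\nabla G(q_j,q_i)$ of the Green's function.

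More precisely, I would first decompose $\mu_{0j}^{\frac{n+2}{2}}S^{*,in}_{\mu,\xi,j}(\xi_j+\mu_{0j}y,t)$ as in Lemma \ref{l5:1} into the sum $S_1+S_2+S_3$ of contributions coming respectively from the $(\dot{\lambda}_j,\lambda_i)$-terms, the $\dot{\mu}_0$-term together with the regular part of the Green's function, and the $(\dot{\xi}_j,\nabla H,\nabla G)$-terms, plus the differences $S_l(\xi_j+\mu_{0j}y,t)-S_l(\xi_j+\mu_j y,t)$ that are $O(\lambda_j/\mu_0)$ perturbations. By the parity observations just mentioned,
\[
\int_{B_{2R}}S_1(\xi_j+\mu_j y)Z_l(y)\,dy=0 \quad\text{and}\quad \int_{B_{2R}}S_2(\xi_j+\mu_j y)Z_l(y)\,dy=0
\]
at leading order, while
\[
\int_{B_{2R}}S_3(\xi_j+\mu_j y)Z_l(y)\,dy \;=\; \dot{\xi}_{j,l}\int_{\mathbb{R}^n}\!\bigl|\partial_{y_l}U\bigr|^2 dy \;-\; c\,\mu_0^{n-1}\!\left[b_j^{n-2}\partial_l H(q_j,q_j)-\sum_{i\neq j}b_j^{\frac{n-2}{2}}b_i^{\frac{n-2}{2}}\partial_l G(q_j,q_i)\right]+O(R^{-\beta}),
\]
with $c>0$ given by $c=p\int_{\mathbb{R}^n}U^{p-1}(\partial_{y_1}U)y_1\,dy$, after using the radial symmetry of $U^{p-1}$ and $U$. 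Dividing by $\int|\partial_{y_l}U|^2\,dy$ and rescaling by the factor $(\mu_{0j}/\mu_j)^{\frac{n+2}{2}}\mu_{0j}$ produces the explicit leading order in $\Pi_{2,j}$.

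Next I would treat the coupling term $p\mu_{0j}^{\frac{n-2}{2}}(\mu_{0j}/\mu_j)^2\int U^{p-1}(\tfrac{\mu_{0j}}{\mu_j}y)\psi(\xi_j+\mu_{0j}y,t)Z_l(y)\,dy$ exactly as in the previous lemma: split $\psi=\psi[0,q,0,0,0]+(\psi[\lambda,\xi,\dot\lambda,\dot\xi,\phi]-\psi[0,q,0,0,0])$, Taylor expand the first summand around $q_j$ (producing a term linear in $\xi_j-q_j$ after integration against the odd function $Z_l$, plus a purely time-dependent $O(\mu_0^{\frac{n-2}{2}+\sigma})$ part), and control the difference via the smoothness estimates of Remark \ref{propositionouterproblem1}. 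The integrals of $B^1[\phi_j],B^2[\phi_j],B^3[\phi_j]$ against $Z_l$ yield the $\phi$-dependence, and a Lipschitz estimate in each variable follows by taking differences and reusing Proposition \ref{propositionouterproblem} together with the bounds $|\lambda|\lesssim\mu_0^{1+\sigma}$, $|\xi-q|\lesssim\mu_0^{1+\sigma}$ from \eqref{e4:210}–\eqref{e4:220}.

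Assembling these pieces gives the stated ODE for $\dot{\xi}_j$; the hard part is not any single estimate but the bookkeeping of which decay rate each term contributes, because unlike in Lemma \ref{l5:1} there is no $\dot{\xi}_j$ on the right-hand side at leading order other than through the tiny corrections contained in $\Theta_{2,j}$, so one must verify that all terms explicitly involving $\dot{\xi}$ on the right are genuinely of size $o(1)$ times $\dot{\xi}$ in order for the implicit ODE $\dot{\xi}_j=\Pi_{2,j}[\dots,\dot{\xi},\dots]$ to be solvable by contraction at the next stage. This is guaranteed by the factor $t_0^{-\varepsilon}$ in front of $\Theta_{2,j}$ and by the fact that the difference $S_l(\xi_j+\mu_{0j}y)-S_l(\xi_j+\mu_j y)$ produces a factor $\lambda_j/\mu_0=o(1)$ multiplying $\dot{\xi}_j$. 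The Lipschitz bounds on $\Theta_{2,j}$ in $(\dot{\lambda},\dot{\xi},\lambda,\xi,\phi)$ are derived exactly as in Lemma \ref{l5:1} by subtracting the identities for two sets of parameters and applying the mean value theorem with the norms defined in \eqref{e4:210}–\eqref{e4:25}.
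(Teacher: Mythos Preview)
Your proposal is correct and follows exactly the approach the paper intends: the paper does not give a separate proof of this lemma but merely states ``Similarly, for the identities \ldots we have'' before quoting the result, so the argument is precisely the parity-based adaptation of the computation in Lemma~\ref{l5:1} that you outline. Your identification of which pieces survive integration against the odd kernel elements $Z_l=\partial_{y_l}U$ (namely the $\dot\xi_j$ and $\nabla H,\nabla G$ terms from $S_3$, with $S_1,S_2$ vanishing at main order by evenness) and your handling of the $\psi$-coupling and $B^i[\phi_j]$ terms match the structure of the previous proof.
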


\noindent From Lemma \ref{l5:1} and Lemma \ref{l5:2}, we know that the orthogonality conditions
\begin{equation*}
\int_{B_{2R}}H_j[\lambda,\xi, \dot{\lambda}, \dot{\xi},\phi](y, t(t))Z_l(y)dy, \mbox{ for } j = 1,\cdots, k \mbox{ and } l = 1, \cdots, n+1,
\end{equation*}
are equivalent to the system of ODEs for $\lambda$ and $\xi$,
\begin{equation}\label{e5:9}
\left\{
\begin{aligned}
&\dot{\lambda}_j + \frac{1}{t}\left(P^Tdiag\left(\frac{\bar{\sigma} + 1}{n-2}\right)P\lambda\right)_j = \Pi_{1, j}[\lambda, \xi, \dot{\lambda}, \dot{\xi}, \phi](t),\\
&\dot{\xi}_j = \Pi_{2, j}[\lambda, \xi, \dot{\lambda}, \dot{\xi}, \phi](t), ~~ j = 1,\cdots, k.
\end{aligned}
\right.
\end{equation}
System (\ref{e5:9}) is solvable for $\lambda$ and $\xi$ satisfying (\ref{e4:210})-(\ref{e4:220}). Indeed, we have
\begin{prop}\label{p5:5.1}
There exists a solution $\lambda = \lambda[\phi](t)$, $\xi = \xi[\phi](t)$ to (\ref{e5:9}) satisfying
\begin{equation*}
|\lambda[\phi_1](t)-\lambda[\phi_2](t)|\lesssim t_0^{-\varepsilon}\mu_0^{1+\sigma}\|\phi_1-\phi_2\|_{n-2+\sigma, n+a}
\end{equation*}
and
\begin{equation*}
|\xi[\phi_1](t)-\xi[\phi_2](t)|\lesssim t_0^{-\varepsilon}\mu_0^{1+\sigma}\|\phi_1-\phi_2\|_{n-2+\sigma, n+a}.
\end{equation*}
\end{prop}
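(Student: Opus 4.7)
The plan is to recast the coupled system (\ref{e5:9}) as a fixed-point problem in a suitable weighted space and solve it via contraction mapping, using the explicit integral representation afforded by the linear part. First, I would diagonalise the linear operator appearing on the left-hand side of the $\lambda$-equation. Since $\mathcal{G}(q)$ is positive definite, the matrix $\frac{1}{n-2}P^{T}\mathrm{diag}(\bar\sigma_{j}+1)P$ has strictly positive eigenvalues $\alpha_{1},\dots,\alpha_{k}$; writing $\tilde\lambda:=P\lambda$, the system decouples at leading order into scalar equations of the form
\begin{equation*}
\dot{\tilde\lambda}_{j}+\frac{\alpha_{j}}{t}\tilde\lambda_{j}=\tilde\Pi_{1,j}[\lambda,\xi,\dot\lambda,\dot\xi,\phi](t),\qquad \dot{\xi}_{j}=\Pi_{2,j}[\lambda,\xi,\dot\lambda,\dot\xi,\phi](t).
\end{equation*}
Each of these admits an explicit solution via the integrating factor $t^{\alpha_{j}}$ for the first and direct integration for the second, imposing the condition $\tilde\lambda_{j}(t),\xi_{j}(t)-q_{j}\to 0$ as $t\to\infty$:
\begin{equation*}
\tilde\lambda_{j}(t)=-t^{-\alpha_{j}}\!\int_{t}^{\infty}\! s^{\alpha_{j}}\tilde\Pi_{1,j}(s)\,ds,\qquad \xi_{j}(t)-q_{j}=-\int_{t}^{\infty}\!\Pi_{2,j}(s)\,ds.
\end{equation*}

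Next, I would define the operator $\mathcal{F}$ sending a pair $(\lambda,\xi)$ satisfying the a priori bounds (\ref{e4:210})--(\ref{e4:220}) (with $\phi$ fixed) to the pair produced by the two integral formulae above, where $(\dot\lambda,\dot\xi)$ are read off from the resulting functions. The decay exponents work out exactly: since $\Pi_{1,j}\lesssim t_{0}^{-\varepsilon}\mu_{0}^{n-1+\sigma}\sim t^{-(n-1+\sigma)/(n-2)}$ and $\alpha_{j}>0$, one computes
\begin{equation*}
|\tilde\lambda_{j}(t)|\lesssim t^{-\alpha_{j}}\cdot t^{\alpha_{j}+1-(n-1+\sigma)/(n-2)}=t^{-(1+\sigma)/(n-2)}\sim\mu_{0}^{1+\sigma},
\end{equation*}
and differentiating shows $|\dot{\tilde\lambda}_{j}(t)|\lesssim t_{0}^{-\varepsilon}\mu_{0}^{n-1+\sigma}$; for the translation parameter, $|\xi_{j}-q_{j}|\lesssim\mu_{0}^{2}\lesssim\mu_{0}^{1+\sigma}$ (using $\sigma<1$). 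Hence $\mathcal{F}$ maps the closed ball
\begin{equation*}
\mathcal{X}:=\{(\lambda,\xi):\|\lambda\|_{1+\sigma}+\|\xi-q\|_{1+\sigma}+\|\dot\lambda\|_{n-1+\sigma}+\|\dot\xi\|_{n-1+\sigma}\le c\}
\end{equation*}
into itself once $t_{0}$ is chosen sufficiently large. The contraction property follows from the Lipschitz estimates for $\Theta_{1,j}$ and $\Theta_{2,j}$ stated in Lemma \ref{l5:1} and Lemma \ref{l5:2}, which supply a gain of $t_{0}^{-\varepsilon}$ in every dependence on $\lambda,\xi,\dot\lambda,\dot\xi$; the Banach fixed point theorem then provides a unique $(\lambda[\phi],\xi[\phi])\in\mathcal{X}$.

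For the Lipschitz dependence on $\phi$, I would subtract the integral identities for $\phi_{1}$ and $\phi_{2}$ and use the Lipschitz bounds $|\Theta_{i,j}[\mu_{0}^{n-1+\sigma}\phi_{1}]-\Theta_{i,j}[\mu_{0}^{n-1+\sigma}\phi_{2}]|\lesssim t_{0}^{-\varepsilon}\|\phi_{1}-\phi_{2}\|_{n-2+\sigma,n+a}$. Treating the implicit dependence through $(\lambda,\xi,\dot\lambda,\dot\xi)$ by absorbing the corresponding terms into the left-hand side (again using the $t_{0}^{-\varepsilon}$ smallness), the same weighted integration as above yields
\begin{equation*}
|\lambda[\phi_{1}](t)-\lambda[\phi_{2}](t)|+|\xi[\phi_{1}](t)-\xi[\phi_{2}](t)|\lesssim t_{0}^{-\varepsilon}\mu_{0}^{1+\sigma}\|\phi_{1}-\phi_{2}\|_{n-2+\sigma,n+a}.
\end{equation*}

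The main technical obstacle will be the implicit, quasilinear character of the system: the right-hand sides $\Pi_{i,j}$ depend on the derivatives $\dot\lambda,\dot\xi$ themselves (through the nonlocal operator $\psi=\Psi[\lambda,\xi,\dot\lambda,\dot\xi,\phi]$ from Proposition \ref{propositionouterproblem}). To close the fixed point argument I must control $\dot\lambda$ and $\dot\xi$ in the same norm in which they appear on the right, which is why the explicit integrating-factor representation (rather than a purely abstract ODE argument) is essential: differentiating the integral formula gives $\dot{\tilde\lambda}_{j}=-\tilde\Pi_{1,j}+\frac{\alpha_{j}}{t}t^{-\alpha_{j}}\int_{t}^{\infty}s^{\alpha_{j}}\tilde\Pi_{1,j}\,ds$, whose norm is directly estimable in terms of $\|\tilde\Pi_{1,j}\|$. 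The remaining bookkeeping --- verifying that the exponents $\alpha_{j}$ strictly exceed the threshold $(1+\sigma)/(n-2)$ for absolute convergence of the improper integrals, and that $\bar\sigma>0$ ensures this --- is direct but requires checking the eigenvalue computation for $\frac{1}{n-2}P^{T}\mathrm{diag}(\bar\sigma+1)P$ in terms of the positive definiteness of $\mathcal{G}(q)$.
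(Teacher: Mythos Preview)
Your overall strategy --- diagonalise via $P$, write the solution through an integrating factor, run a contraction in the ball $\mathcal{X}$, then difference in $\phi$ --- is exactly the argument the paper defers to (it cites \cite{delPinoMussoJEMS} and omits the proof). So the route is the intended one.

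There is, however, a concrete error in the integral representation for $\tilde\lambda_j$. With $\mu_0^{n-1+\sigma}(s)\sim s^{-(n-1+\sigma)/(n-2)}$, the integrand $s^{\alpha_j}\tilde\Pi_{1,j}(s)$ behaves like $s^{\alpha_j-(n-1+\sigma)/(n-2)}$, and the improper integral $\int_t^\infty$ converges only when $\alpha_j<(1+\sigma)/(n-2)$, the \emph{opposite} of the inequality you state. Since here $\alpha_j=\frac{\bar\sigma_j+1}{n-2}$ (or $\frac{\bar\sigma_j+2}{n-2}$) with $\bar\sigma_j>0$, for small $\sigma$ one typically has $\alpha_j>(1+\sigma)/(n-2)$ and your formula gives a divergent integral. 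In that regime the correct representation integrates \emph{forward} from $t_0$,
\[
\tilde\lambda_j(t)=t^{-\alpha_j}\int_{t_0}^{t}s^{\alpha_j}\tilde\Pi_{1,j}(s)\,ds,
\]
after which your power-counting $t^{-\alpha_j}\cdot t^{\alpha_j+1-(n-1+\sigma)/(n-2)}=\mu_0^{1+\sigma}$ is valid (indeed, the arithmetic you wrote already corresponds to evaluating an antiderivative at $t$, not at $\infty$). In general one must split into the two cases $\alpha_j\gtrless(1+\sigma)/(n-2)$ and use the appropriate endpoint; this is how the argument is carried out in \cite{delPinoMussoJEMS}. The $\xi$-equation, having no $1/t$ coefficient, is correctly integrated from $\infty$ as you wrote. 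Once this is fixed, the contraction and Lipschitz-in-$\phi$ steps go through exactly as you outline.
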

\noindent The proof is similar to that of \cite{delPinoMussoJEMS}, so we omit it here.

\subsection{Solving the inner problem (\ref{e:innerproblemselfsimilar})}
After the parameter functions $\lambda = \lambda[\phi]$ and $\xi = \xi[\phi]$ have been chosen such that the orthogonality conditions (\ref{e:orthogonalitycondition1}) hold, problem (\ref{e:innerproblemselfsimilar}) can be solved in the class of functions satisfying $\|\phi\|_{n-2+\sigma,n+a} < +\infty$ bounded. From Proposition \ref{proposition5.2000111}, there exists a bounded linear operator $\mathcal{T}$ associating any function $h(y,t)$ with $\|U^{p-1}h\|_{n-2+\sigma, n+2+a}$-bounded the solution of problem (\ref{e:mainsection3}), thus (\ref{e:innerproblemselfsimilar}) reduces to the following fixed point problem
\begin{equation*}
\phi = (\phi_1,\cdots, \phi_k) = \mathcal{A}(\phi): = (\mathcal{T}(H_1[\lambda,\xi,\dot{\lambda},\dot{\xi},\phi]), \cdots, \mathcal{T}(H_k[\lambda,\xi,\dot{\lambda},\dot{\xi},\phi])).
\end{equation*}

From the definition of $H_j$, we have the estimate
\begin{equation}\label{e6:1}
\begin{aligned}
&\left|H[\lambda,\xi,\dot{\lambda},\dot{\xi},\phi](y, t)\right|\lesssim t_0^{-\varepsilon}\frac{\mu_0^{n-2+\sigma}}{1+|y|^{n+2+a}}.
\end{aligned}
\end{equation}
Therefore $\mathcal{A}$ maps the set $\Lambda:=\{\phi\,|\, \|\phi\|_{n-2+\sigma, n+a}\leq ct_0^{-\varepsilon}\}$ into itself for some large constant $c> 0$.

Moreover, $\mathcal{A}$ is a contraction map, hence there exists a fixed point, from which we find a solution of (\ref{e:main}). Indeed, this is consequence of the following estimates:

\noindent {(a)} \begin{equation*}
\begin{aligned}
&\mu_{0j}^{\frac{n+2}{2}}\left|S_{\mu_1, \xi_1, j}(\xi_{j,1} + \mu_{0j}y, t) - S_{\mu_2, \xi_2, j}(\xi_{j,2} + \mu_{0j}y, t)\right|\\
&\quad\quad\quad\quad\quad\quad\quad\quad\quad\quad\quad\quad\quad\quad\quad\quad\quad\quad\lesssim t_0^{-\varepsilon}\frac{\mu_0^{n-2+\sigma}(t)}{1+|y|^{n+2+a}}\|\phi^{(1)} - \phi^{(2)}\|_{n-2+\sigma, n+a}
\end{aligned}
\end{equation*}
where
\begin{equation*}
\mu_i = \mu[\phi^{(i)}],\quad \xi_i = \xi[\phi^{(i)}],\quad \xi_{j, i} = \xi_j[\phi^{(i)}],\quad i = 1, 2.
\end{equation*}

\noindent {(b)}
From Remark \ref{propositionouterproblem1}, we have
\begin{eqnarray*}
\begin{aligned}
& p\mu_{0j}^{\frac{n-2}{2}}\Bigg|\frac{\mu_{0j}^{2}}{\mu_{j,1}^{2}}U^{p-1}\left(\frac{\mu_{0j}}{\mu_{j,1}}y\right)\psi[\phi^{(1)}](\xi_{j,1} + \mu_{0j}y, t)\\
&\quad\quad\quad\quad\quad\quad\quad\quad\quad\quad\quad\quad\quad\quad\quad\quad -\frac{\mu_{0j}^{2}}{\mu_{j,2}^{2}}U^{p-1}\left(\frac{\mu_{0j}}{\mu_{j,2}}y\right)\psi[\phi^{(2)}](\xi_{j,2} + \mu_{0j}y, t)\Bigg|\\
&\quad\quad\quad\quad\quad\quad\quad\quad\quad\quad\quad\quad\quad\quad\quad\quad\quad\lesssim t_0^{-\varepsilon}\frac{\mu_0^{n-2+\sigma}(t)}{1+|y|^{n+2+a}}\|\phi^{(1)} - \phi^{(2)}\|_{n-2+\sigma, n+a}
\end{aligned}
\end{eqnarray*}
where
\begin{equation*}
\mu_{j,i} = \mu_j[\phi^{(i)}],\quad \psi[\phi^{(i)}] = \Psi[\lambda_i, \xi_i, \dot{\lambda}_i, \dot{\xi}_i, \phi^{(i)}],\quad i = 1, 2.
\end{equation*}

\noindent {(c)}
From the definitions in Section 2, we have
\begin{equation*}
\left|B^l[\phi^{(1)}_j]-B^{(1)}_j[\phi^{(2)}_j]\right|\lesssim t_0^{-\varepsilon}\frac{\mu_0^{n-2+\sigma}(t)}{1+|y|^{n+2+a}}\|\phi^{(1)} - \phi^{(2)}\|_{n-2+\sigma, n+a}
\end{equation*}
hold for $l=1, 2, 3$.\qed
\section*{Acknowledgements}
J. Wei is partially supported by NSERC of Canada.

\end{document}